\DeclareSymbolFont{rmlargesymbols}{OMX}{mdbch}{m}{n}
\DeclareMathSymbol{\rmintop}{\mathop}{rmlargesymbols}{82}
\newcommand{\rmint}{\rmintop\nolimits}
\renewcommand{\ge}{\geqslant}
\renewcommand{\leq}{\leqslant}
\renewcommand{\geq}{\geqslant}
\renewcommand{\setminus}{\smallsetminus}
\renewcommand{\gamma}{\upgamma}
\renewcommand{\pi}{\uppi}
\newcommand{\e}{\varepsilon}
\newcommand{\R}{\mathbb R}
\newtheorem{theorem}{Theorem}
\newtheorem{lemma}[theorem]{Lemma}
\newtheorem{proposition}[theorem]{Proposition}
\newtheorem{corollary}[theorem]{Corollary}
\newtheorem{definition}[theorem]{Definition}
\theoremstyle{remark}
\newtheorem{remark}[theorem]{Remark}
\newtheorem{question}[theorem]{Question}
\renewcommand{\tau}{\uptau}
\renewcommand{\xi}{\upxi}
\renewcommand{\rho}{\uprho}
\renewcommand{\hat}{\widehat}
\newcommand{\N}{\mathbb N}
\newcommand{\eqdef}{\stackrel{\mathrm{def}}{=}}
\renewcommand{\theta}{\uptheta}
\renewcommand{\lambda}{\uplambda}
\renewcommand{\emptyset}{\varnothing}
\renewcommand{\gamma}{\upgamma}
\renewcommand{\beta}{\upbeta}
\renewcommand{\alpha}{\upalpha}
\renewcommand{\kappa}{\upkappa}
\renewcommand{\psi}{\uppsi}
\renewcommand{\rho}{\varrho}
\renewcommand{\delta}{\updelta}
\renewcommand{\pi}{\uppi}
\renewcommand{\omega}{\upomega}
\renewcommand{\eta}{\upeta}
\renewcommand{\kappa}{\upkappa}
\renewcommand{\mu}{\upmu}
\renewcommand{\nu}{\upnu}
\renewcommand{\pi}{\uppi}
\renewcommand{\zeta}{\upzeta}
\newcommand{\mb}{\mathbb}
\newcommand*\diff{\mathop{}\!\mathrm{d}}
\newcommand{\ms}{\mathscr}
\newcommand{\msf}{\mathsf}
\newcommand{\mr}{\mathrm}
\newcommand*{\Nearrow}{\rotatebox[origin=c]{45}{\(\Longrightarrow\)}}
\newcommand*{\Searrow}{\rotatebox[origin=c]{315}{\(\Longrightarrow\)}}
\begin{document}

\title{Concavity principles for weighted marginals}

\author{Dario Cordero-Erausquin and Alexandros Eskenazis}
\address{Institut de Math\'ematiques de Jussieu, Sorbonne Universit\'e, Paris, 75252, France}
\thanks{{\it E-mail addresses:} \href{mailto:dario.cordero@imj-prg.fr}{\nolinkurl{dario.cordero@imj-prg.fr}}, $\{$\href{alexandros.eskenazis@imj-prg.fr}{\nolinkurl{alexandros.eskenazis@imj-prg.fr}}, \href{ae466@cam.ac.uk}{\nolinkurl{ae466@cam.ac.uk}}$\}$.
}

\subjclass[2020]{Primary: 52A40; Secondary: 52A20, 28C20, 60D05, 47F10.}
\keywords{Brunn--Minkowski inequality, B-inequality, Gardner--Zvavitch problem, Borell--Brascamp--Lieb inequalities, symmetric convex sets, log-concave measures.}

\thanks{Part of this work was completed while A.~E.~was visiting the Hausdorff Institute for Mathematics during the Dual Trimester Program ``Boolean Analysis in Computer Science''.}

\maketitle

\vspace{-0.25in}

\begin{abstract}
We develop a general framework to study concavity properties of weighted marginals of $\beta$-concave functions on $\mathbb{R}^n$ via local methods. As a concrete implementation of our approach, we obtain a functional version of the dimensional Brunn--Minkowski inequality for rotationally invariant log-concave measures.  Moreover,  we derive a Pr\'ekopa-type concavity principle with rotationally invariant weights for even log-concave functions which encompasses the B-inequality.
\end{abstract}


\section{Introduction}


\subsection{Brunn--Minkowski inequalities and concavity principles}

The Brunn--Minkowski--Luster\-nik inequality asserts that for any nonempty Borel sets $A,B$ in $\R^n$,  
\begin{equation} \label{eq:bm}
\forall \ \lambda\in(0,1),\qquad \big| \lambda A+(1-\lambda)B\big|^{1/n} \geq \lambda |A|^{1/n}+(1-\lambda)|B|^{1/n},
\end{equation}
where $\lambda A+(1-\lambda)B = \{\lambda a+(1-\lambda) b: \ a\in A,\ b\in B\}$ is the Minkowski convex combination of $A$ and $B$ and $|\cdot|$ is the standard Lebesgue measure. Restricting the inequality to convex sets, which is the original framework studied by Brunn and Minkowski,  even allows for a concavity of the volume in the parameter $\lambda$. This statement, proven by Brunn for $n=2$,  is usually referred to as Brunn's \emph{concavity principle} and can be reformulated as follows: if $\Omega\subseteq\R^{n+1}$ is a bounded convex set and $\Omega_t = \{x\in\R^n: \ (t,x)\in \Omega\}$ for $t\in\R$, then the function $\msf{v}:\R\to\R_+$ given by
\begin{equation}
\forall \ t\in\R, \qquad \msf{v}(t) \eqdef \big| \Omega_t\big|^{1/n}
\end{equation}
is concave on its support. Such concavity statements suggest the existence of a \emph{local} or \emph{variational} take on the Brunn--Minkowski inequality, by computing the second derivative of $\msf{v}$, that is by understanding the second variation of volume around a convex set. This was the revolutionary approach taken by Hilbert \cite{Hil12},  upon which this work builds,  but with a functional perspective. 

An idea that emerged in the second half of the 20th century and whose importance cannot be overstated was to extend the geometric Brunn--Minkowski theory of sets to functions. 
In particular, the works of Borell \cite{Bor75} and Brascamp--Lieb \cite{BL76} (see also \cite{Rin76}) put forward the following powerful versions of the Brunn--Minkowski--Lusternik inequality: 
if $\kappa \in[-\frac{1}{n},\infty]$ and $f,g,h:\R^n\to\R_+$ are measurable functions such that
\begin{equation} \label{eq:bbl1}
\forall \ x,y\in\R^n, \qquad f(x)g(y)>0 \quad \Longrightarrow \quad h\big(\lambda x+(1-\lambda)y\big) \geq \big( \lambda f(x)^\kappa + (1-\lambda) g(y)^\kappa\big)^{1/\kappa}
\end{equation} 
for some $\lambda\in(0,1)$, then we also have
\begin{equation} \label{eq:bbl2}
\rmint_{\R^n} h(x)\,\diff x \geq \left( \lambda\left( \rmint_{\R^n} f(x)\,\diff x\right)^{\kappa_n} + (1-\lambda)\left( \rmint_{\R^n} g(x) \,\diff x\right)^{\kappa_n} \right)^{1/\kappa_n},
\end{equation}
where $\kappa_n = \frac{\kappa}{1+n\kappa} \in [-\infty,\frac{1}{n}]$. Per standard convention, the means on the right-hand sides of \eqref{eq:bbl1} and \eqref{eq:bbl2} are understood as the maximum when $\kappa=\infty$, as the $\lambda$-geometric mean when $\kappa=\kappa_n=0$, and as the minimum when $\kappa_n=-\infty$.  In the regime $\kappa>0$, which is most relevant to us,  an equivalent statement was proven earlier by Henstock--Macbeath \cite{HM53} for $n=1$ and Dinghas \cite{Din57} in general.  The case $\kappa=0$ coincides with the celebrated Pr\'ekopa--Leindler inequality \cite{Pre71,Pre73,Lei72b}. We refer to \cite{Gar02} for more on the history of these inequalities. 

The theorem of Borell and Brascamp--Lieb 
readily implies a functional concavity principle \emph{for marginals} when some {convexity} is imposed on the functions.  Let $\Omega \subseteq \R^{n+1}$ be a convex set and for each $t\in\R$, let $\Omega_t = \{x\in\R^n: \ (t,x)\in\Omega\}$ be again the corresponding section of $\Omega$.  
Then, if $\Phi:\Omega\to\R_+$ is a concave function and $\beta>0$, the function $\varphi:\R\to\R_+$ given by
\begin{equation} \label{eq:varphi}
\forall \ t\in \R,\qquad \varphi(t) \eqdef \left( \rmint_{\Omega_t} \Phi(t,x)^\beta\,\diff x\right)^{\frac{1}{\beta+n}}
\end{equation}
is concave on its support, assuming that the integral converges. Indeed, if $t_1,t_2\in \R$ are such that $\Omega(t_1)$ and $\Omega(t_2)$ are nonempty, the concavity of $\varphi$ follows by applying inequality \eqref{eq:bbl2} to $f=\Phi(t_1,\cdot)^\beta$, $g=\Phi(t_2,\cdot)^\beta$ and $h=\Phi(\lambda t_1 + (1-\lambda) t_2,\cdot)^\beta$, which satisfy \eqref{eq:bbl1} with $\kappa=\frac{1}{\beta}$.
Note that taking $\Phi(t,x)\equiv1$ on $\Omega$ and letting $\beta \to 0$,  one immediately recovers Brunn's concavity principle.  As a side remark,  we mention that given a convex function $V:\R^{n+1}\to\R\cup\{\infty\}$, one can apply the concavity principle for \eqref{eq:varphi} to the concave function $\Phi(t,x) = \big(1-\tfrac{1}{\beta}V(t,x)\big)_+$ and let $\beta\to\infty$ to recover the celebrated theorem of Pr\'ekopa \cite{Pre73}, \mbox{asserting that marginals of log-concave functions $e^{-V(t,x)}$ are log-concave.}


\subsection{Weighted Brunn--Minkowski inequalities and their functional versions}
A natural question is to what extent the previous inequalities for the Lebesgue measure, which we denoted by $|\cdot|$ or $\diff x$, are valid for a more general \emph{weight} $\mu$. In this respect, the log-concave case $\kappa=0$ of \eqref{eq:bbl1} and \eqref{eq:bbl2} is rather particular,  as products of log-concave functions are log-concave.  In other words, in Pr\'ekopa's theorem we can readily replace $\diff x$-marginals by weighted marginals with respect to any measure $\diff\mu(x)= e^{-V(x)}\, \diff x$, where $V:\R^n\to\R\cup\{\infty\}$ is convex. However, the situation is totally different in the range $\kappa>0$, as is confirmed by Borell's classification of convex measures \cite{Bor75}. 

A closely related major direction in contemporary Brunn--Minkowski theory aims to understand the role of central symmetry in volumetric inequalities for convex sets.  At the top of this hierarchy of questions lies the celebrated log-Brunn--Minkowski conjecture of B\"or\"oczky, Lutwak, Yang and Zhang \cite{BLYZ12} (see \cite{KM22,Mil21,IM24} for some recent developments), which, if true,  would have numerous consequences including the B-conjecture and the dimensional Brunn--Minkowski conjecture (see \cite{Sar16,LMNZ17}). These well-known open problems postulate that for every even log-concave measure $\mu$ on $\R^n$ and every symmetric convex sets $K,L$ in $\R^n$,  we have
\begin{equation} \label{eq:B-conj}
\forall \ a,b\in\R_+ \mbox{ and } \lambda\in(0,1), \qquad \mu\big(a^\lambda b^{1-\lambda}K\big) \geq \mu(aK)^\lambda \mu(bK)^{1-\lambda}
\end{equation}
and 
\begin{equation} \label{eq:dim-BM-conj}
\forall \ \lambda\in(0,1),\qquad \mu\big(\lambda K+(1-\lambda)L\big)^{1/n} \geq \lambda\mu(K)^{1/n}+(1-\lambda)\mu(L)^{1/n}
\end{equation}
respectively. Borell's classification of convex measures \cite{Bor75} implies that inequality \eqref{eq:dim-BM-conj} can be valid for \emph{all} convex sets in $\R^n$ only if $\mu$ is a multiple of the Lebesgue measure restricted on a convex set. In some sense, the dimensional Brunn--Minkowski conjecture predicts that Borell's classification breaks down when it comes to symmetric sets, allowing for the emergence of dimensional second-order phenomena going beyond log-concavity. 
A large body of work has been devoted to these conjectures in the last decades \cite{CFM04,GZ10,NT13,Mar16,Sar16,CLM17,LMNZ17,CR20,KL21,EM21,HKL21,KL22,KM22,BK22,Liv23,CR23}. The most general optimal result to date is due to \cite{CR23}, asserting in particular that \eqref{eq:B-conj} and \eqref{eq:dim-BM-conj}\mbox{ are both valid for rotationally invariant log-concave measures.}

The importance of the functional forms 
of the dimensional Brunn--Minkowski inequality \eqref{eq:bm} discussed earlier, naturally raises the question of whether the conjectures above admit functional versions as well.  The functional formulation of the B-conjecture was put forth in \cite{CR20}, where it was proven to be equivalent to the geometric version \eqref{eq:B-conj}.  The purpose of this paper is to investigate concavity principles in the spirit of the previous subsection for weighted marginals in the presence of symmetry.  Our first main result,  is the following functional formulation of the dimensional Brunn--Minkowski conjecture \eqref{eq:dim-BM-conj} for rotationally invariant measures.

\begin{theorem} \label{thm:bbl}
Let $w:[0,\infty)\to(-\infty,\infty]$ be an increasing function such that $t\mapsto w(e^t)$ is convex on $\R$ and let $\mu$ be the measure on $\R^n$ with density $e^{-w(|x|)}$.  Moreover, let $\Omega\subseteq \R^{n+1}$ be a convex set such that for each $t\in\R$, the set $\Omega_t=\{x\in\R^n: \ (t,x)\in\Omega\}$ is symmetric, and $\Phi:\Omega\to\R_+$ a concave function such that for each $t$, the function $\Phi(t,\cdot)$ is even on $\Omega_t$.  Then, 
\begin{equation} \label{eq:varphi-thm}
 \varphi(t) \eqdef \Bigg( \rmint_{\Omega_t} \Phi(t,x)^\beta\,\diff\mu(x)\Bigg)^{\frac{1}{\beta+n}}
\end{equation}
is concave on its support for every $\beta>0$, provided that the integral converges.
\end{theorem}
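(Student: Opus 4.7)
The plan is to adapt the classical variational method of Hilbert to the weighted symmetric setting: concavity of $\varphi$ at an interior point $t_0$ of its support follows by verifying $\varphi''(t_0) \le 0$ via a second-order local computation, which itself reduces to a functional inequality on the section $\Omega_{t_0}$.

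\textbf{Step 1 (Reductions).} By mollification, truncation, and standard limiting arguments, I would reduce to the case in which $w$ is smooth with $t \mapsto w(e^t)$ strictly convex, $\Omega$ is bounded with smooth strictly convex boundary, and $\Phi$ is smooth, strictly positive, and strictly concave on a neighborhood of $\overline{\Omega}$, with $\Phi(t,\cdot)$ even. After a time translation, it suffices to prove $\varphi''(0) \le 0$. Set $F(t) \eqdef \rmint_{\Omega_t}\Phi(t,x)^\beta\,d\mu(x)$ and $p \eqdef \beta+n$; then $\varphi = F^{1/p}$ and the concavity at $0$ is equivalent to $p\, F(0) F''(0) \le (p-1) F'(0)^2$.

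\textbf{Step 2 (Local form).} I would compute $F'(0)$ and $F''(0)$ by differentiating under the integral, using the divergence theorem to convert moving-boundary terms on $\partial\Omega_t$ into bulk integrals on $\Omega_0$. The contribution from the motion of $\partial\Omega_t$ yields a boundary integral whose sign is controlled by the second fundamental form of $\partial\Omega$, which is nonnegative by convexity of $\Omega$. Completing the square in the even perturbation $\psi \eqdef \partial_t \log(\Phi^\beta)\big|_{t=0}$ transforms the inequality $p\, F F'' \le (p-1)(F')^2$ into a variance--Hessian (Brascamp--Lieb--type) inequality of the shape
$$\operatorname{Var}_{\nu_0}(\psi) \le \rmint_{\Omega_0}\langle \nabla\psi,\, H^{-1}\nabla\psi\rangle\,d\nu_0 \;+\; (\text{favorable boundary term}),$$
where $d\nu_0 = \Phi(0,\cdot)^\beta\,d\mu|_{\Omega_0}$ and $H$ is the effective Hessian $-\beta\,\nabla_x^2\log\Phi(0,\cdot) + \nabla^2(w(|\cdot|))$; by construction $\psi$ is even and $\Omega_0$ is symmetric.

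\textbf{Step 3 (Weighted Brascamp--Lieb for even functions).} The main technical input is the variance--Hessian inequality from Step 2 restricted to \emph{even} test functions on the \emph{symmetric} domain $\Omega_0$ with the \emph{rotationally invariant} weight. The key observation is that $\nabla^2(w(|\cdot|))$ splits into a radial block, governed by the convexity of $t\mapsto w(e^t)$ in the logarithmic radial coordinate, and an angular block of order $w'(r)/r$, which is nonnegative since $w$ is increasing. For even test functions, the angular components of $\nabla\psi$ carry an additional integration-by-parts gain (the usual Poincar\'e improvement for odd functions against rotationally invariant measures) that compensates for the fact that $\mu$ itself need not be globally log-concave. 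Decomposing $\psi$ into spherical-harmonic sectors on $\Omega_0$ and analysing each sector by a one-dimensional Sturm--Liouville computation in the radial variable would then yield the required estimate.

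The main obstacle is Step 3: producing a sharp Brascamp--Lieb-type inequality for even functions on a symmetric convex domain under the \emph{weak} hypothesis that only $t\mapsto w(e^t)$ is convex, rather than the stronger $\nabla^2(w(|\cdot|))\ge 0$ that would correspond to global log-concavity of $\mu$. This is precisely the step where domain symmetry, evenness of test functions, and rotational invariance of $\mu$ must combine to circumvent Borell's classification obstruction, and it forces the assumption on $w$ to be used in a sharp way.
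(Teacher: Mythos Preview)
Your overall strategy---local second-variation analysis reducing to a spectral inequality---matches the paper's, but there are two genuine gaps where your sketch underestimates the difficulty.

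\textbf{The boundary terms in Step 2.} You assert that the moving-boundary contribution ``yields a boundary integral whose sign is controlled by the second fundamental form of $\partial\Omega$.'' This is where the argument actually bites. The full second-derivative formula (the paper's Proposition~5) produces, beyond the $\mr{I\!I}$-terms you anticipate, further boundary integrals of the form $\int_{\Sigma_0}\frac{\partial_t\Phi_0}{\Phi_0}\psi\,\diff\nu$ and $\int_{\Sigma_0}\partial_t\psi_t\,\diff\nu$, whose signs are \emph{not} determined by convexity of $\Omega$ alone. The paper handles this by first reducing to the case where $\Omega$ is a super-level set $\{\Phi>\eta\}$; then $\nabla_x\Phi$ is parallel to the outer normal on $\Sigma_0$, and an explicit computation (Lemma~10) shows these extra terms have the right sign. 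Without this reduction the boundary analysis does not close, and your Step~1 approximation (``$\Phi$ smooth, strictly positive on a neighborhood of $\overline\Omega$'') does not supply it---indeed the paper explicitly flags that the naive approximation strategy fails in the range $\beta>0$ because one cannot decouple the domain from the function.

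\textbf{The spectral inequality in Step 3.} Your proposed route via spherical-harmonic decomposition and sector-by-sector Sturm--Liouville analysis is plausible in spirit but is not what the paper does, and you correctly identify it as the main obstacle. The paper instead invokes a specific weighted Poincar\'e inequality from \cite{CR23} (applied to the odd functions $\partial_i v$ for an auxiliary even $v=u+\tfrac{\lambda}{2}|x|^2$), which directly yields the ``hereditary convexity'' estimate
\[
\int \|\nabla^2u\|_{\mr{HS}}^2 + \langle \nabla^2 W\nabla u,\nabla u\rangle\,\diff\nu \ \ge\ \frac{\big(\int \ms{L}_\mu u\,\diff\nu\big)^2}{\int \ms{L}_\mu(|x|^2/2)\,\diff\nu}.
\]
This is not quite the Brascamp--Lieb form $\mathrm{Var}(\psi)\le\int\langle H^{-1}\nabla\psi,\nabla\psi\rangle$ you wrote: the paper works on the dual side, solving $\ms{L}_\nu u = \psi - \int\psi$ with a carefully chosen Neumann condition and applying the Bochner--Reilly formula to $u$. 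The $H^{-1}$-formulation you propose would require inverting the full effective Hessian, which is awkward precisely because $\nabla^2(w(|\cdot|))$ need not be nonnegative under the weak hypothesis on $w$.
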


Note that, as in~\cite{CR23}, the class of permissible densities  of the measure $\mu$ include log-concave ones like $e^{-|x|^\alpha}$, $\alpha\geq1$, but also numerous which are not log-concave like $(1+|x|^\alpha)^{-\beta}$ with $\alpha,\beta>0$.  Up to scaling,  the statement of Theorem \ref{thm:bbl} is equivalent to the fact that for any convex function $V:\R^{n+1}\to\R\cup\{\infty\}$ such that each $V(t,\cdot)$ is even on $\R^n$, the function
\begin{equation}
\varphi(t) \eqdef \Bigg( \rmint_{\R^n} \big(1-V(t,x)\big)_+^\beta \, \diff\mu(x) \Bigg)^{\frac{1}{\beta+n}}
\end{equation}
is concave on its support for every $\beta>0$, provided that the integral converges.

Theorem~\ref{thm:bbl} should be viewed as the functional version of the dimensional Brunn--Minkowski inequality for $\mu$, proven in \cite{EM21,CR23}, which corresponds to $\beta\to0$.  The possibility of such functional inequalities was first investigated by Roysdon and Xing in \cite{RX21}. There, they proposed a general conjecture \cite[Conjecture~6.2]{RX21} aiming to generalize the Borell--Brascamp--Lieb inequality to arbitrary measures. This can easily be seen to be false in general (take,  in their notation, $p=1$, $t=1/2$, $f={\bf 1}_{\msf{B}_2^n+Me_1}$, $g={\bf 1}_{\msf{B}_2^n-Me_1}$ and $h={\bf 1}_{B_2^n}$ for any finite measure $\mu$ and let $M\to\infty$). However, Theorem \ref{thm:bbl} shows that it is true for functions and measures with appropriate concavity properties.

Specializing to functions $\Phi(t,x)$ which do not depend on $t$, one obtains the following corollary.

\begin{corollary} \label{cor:b-conc}
Let $w:[0,\infty)\to(-\infty,\infty]$ be an increasing function such that $t\mapsto w(e^t)$ is convex on $\R$ and let $\mu$ be the measure on $\R^n$ with density $e^{-w(|x|)}$.  Moreover, let $C\subseteq\R^n$ be a symmetric convex set and consider an even concave function $\Phi:C\to \R_+$. Then, for every $\beta>0$, the measure $\diff\nu_\beta(x) = \Phi(x)^\beta\,\diff\mu(x)$, supported on $C$, satisfies the Brunn--Minkowski inequality
\begin{equation}
\forall \ \lambda\in(0,1),\qquad \nu_\beta\big(\lambda K + (1-\lambda)L\big)^{\frac{1}{\beta+n}} \geq \lambda\nu_\beta(K)^{\frac{1}{\beta+n}}+(1-\lambda)\nu_\beta(L)^{\frac{1}{\beta+n}}
\end{equation}
for every symmetric convex sets $K,L$ in $\R^n$.
\end{corollary}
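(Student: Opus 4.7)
The plan is to deduce the corollary as a direct specialization of Theorem~\ref{thm:bbl}, by building an interpolating convex body in $\R^{n+1}$ whose horizontal sections run between $L$ and $K$, and by regarding the given $\Phi$ as a $t$-independent function on this body.

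Concretely, fix symmetric convex sets $K,L\subset\R^n$ and $\lambda\in(0,1)$, and set
\begin{equation*}
\Omega \eqdef \big\{(t,x)\in [0,1]\times \R^n : x\in tK+(1-t)L\big\}\cap(\R\times C).
\end{equation*}
The set $\{(t,x):t\in[0,1],\,x\in tK+(1-t)L\}$ is convex in $\R^{n+1}$ by the standard argument that the graph of a Minkowski interpolation of convex sets is convex, and intersecting with the convex cylinder $\R\times C$ preserves convexity; hence $\Omega$ is convex. For each $t\in[0,1]$ the section $\Omega_t = \big(tK+(1-t)L\big)\cap C$ is an intersection of symmetric convex sets, hence symmetric and convex. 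Define $\tilde\Phi(t,x)\eqdef\Phi(x)$ on $\Omega$; since $\Phi$ is concave on $C\supseteq\Omega_t$ and even, the function $\tilde\Phi$ is concave on $\Omega$ and even in $x$ on each $\Omega_t$. All hypotheses of Theorem~\ref{thm:bbl} (applied to the rotationally invariant measure $\mu$ with density $e^{-w(|x|)}$) are therefore met.

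Theorem~\ref{thm:bbl} then gives that
\begin{equation*}
\varphi(t) \eqdef \Bigg(\rmint_{\Omega_t}\Phi(x)^\beta \ud\mu(x)\Bigg)^{1/(\beta+n)}
\end{equation*}
is concave on its support in $[0,1]$. Since $\nu_\beta = \Phi^\beta\mu$ is supported in $C$, for any Borel set $A\subseteq\R^n$ we have $\nu_\beta(A) = \nu_\beta(A\cap C)$, and reading off the values at $t=0,\lambda,1$ yields
\begin{equation*}
\varphi(0) = \nu_\beta(L)^{\frac{1}{\beta+n}},\qquad \varphi(1) = \nu_\beta(K)^{\frac{1}{\beta+n}},\qquad \varphi(\lambda) = \nu_\beta\big(\lambda K+(1-\lambda)L\big)^{\frac{1}{\beta+n}}.
\end{equation*}
The concavity inequality $\varphi(\lambda)\geq \lambda\varphi(1)+(1-\lambda)\varphi(0)$ is precisely the sought conclusion. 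One may assume $\nu_\beta(K),\nu_\beta(L)>0$, since otherwise the inequality is trivial, which in particular places the relevant values of $t$ inside the support of $\varphi$; finiteness of the integrals defining $\varphi$ is automatic by the boundedness of $\Omega_t$ and the local boundedness of $e^{-w(|x|)}$.

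The main content here is entirely contained in Theorem~\ref{thm:bbl}; there is no genuine obstacle beyond arranging the correct convex set $\Omega$ and checking that the intersection with $\R\times C$ preserves both the convexity of $\Omega$ and the symmetry of its sections. Once this bookkeeping is done, the statement is a direct corollary.
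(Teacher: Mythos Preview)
Your proof is correct and follows essentially the same approach as the paper: both apply Theorem~\ref{thm:bbl} to the convex set $\Omega=\bigcup_{t\in[0,1]}\{t\}\times\big((tK+(1-t)L)\cap C\big)$ with the $t$-independent concave function $\tilde\Phi(t,x)=\Phi(x)$. The only minor inaccuracy is your claim that $\Omega_t$ is bounded---none of $K$, $L$, $C$ is assumed bounded---so your finiteness justification needs a small tweak, but this does not affect the argument (the paper leaves this point implicit as well).
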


Indeed, Corollary \ref{cor:b-conc} is an immediate application of Theorem \ref{thm:bbl} for the convex set
\begin{equation}
\Omega\eqdef \bigcup_{t\in[0,1]} \{t\}\times \big((tK+(1-t)L)\cap C\big) \subseteq \R^{n+1}
\end{equation}
and the concave function $\Phi(t,x)\eqdef \Phi(x){\bf 1}_C(x)$, where $(t,x)\in\Omega$.

It is worth emphasizing that when $\mu$ is a log-concave measure,  the measures $\nu_\beta$ appearing in Corollary \ref{cor:b-conc} are also log-concave and then this result fits the framework of the dimensional Brunn--Minkowski conjecture. In particular,  Corollary \ref{cor:b-conc} does not follow from the best known general bound in this conjecture due to Livshyts \cite{Liv23} in the regime $\beta\lesssim n^4$. 

The concavity principles for the \emph{unweighted} marginal function \eqref{eq:varphi} were first investigated via local methods in the work of Nguyen \cite{Ngu14a}, under the name \emph{dimensional Pr\'ekopa theorems}. In \cite{Ngu14a}, Nguyen claims to prove the concavity principles both in the regime $\beta>0$ of Theorem \ref{thm:bbl} and in the dual regime $\beta<0$ which corresponds to $\kappa\in[-\tfrac{1}{n},0)$ in the Borell--Brascamp--Lieb inequalities \eqref{eq:bbl1} and \eqref{eq:bbl2}. Central to Nguyen's argument is a convenient expression for the second derivative of the marginal function \eqref{eq:varphi} when the underlying set $\Omega = I\times U$ for an interval $I$ in $\R$ and a convex set $U$ in $\R^n$, much like in  Brascamp and Lieb's local proof \cite{BL76} of Pr\'ekopa's theorem. This expression, inspired by \cite{Cor05,CFM04}, is obtained by a delicate application of H\"ormander's $L_2$ method. 

While restricting to product sets $\Omega$ can easily be shown to be sufficient to yield the general concavity principles in the range $\beta\leq0$ by a simple approximation argument, the same reasoning (details for which are omitted in \cite[p.~26]{Ngu14a}) appears to fail fundamentally in the dual range $\beta>0$.  For instance, the distribution of an indicator function ${\bf 1}_{\Omega}(t,x)$, where $\Omega$ is  a general convex subset of $\R^{n+1}$ cannot be approximated by those of functions of the form $\Phi(t,x) {\bf 1}_{I\times U}(t,x)$, where $\Phi$ is a concave nonnegative function on $I\times U$, as the support of such functions will always be a product set. In order to overcome this issue and to prove Theorem \ref{thm:bbl}, we are forced to find a convenient expression for the derivatives of the function \eqref{eq:varphi-thm} without making additional assumptions for the underlying domain $\Omega$. This task inherently requires combining the $L_2$ reasoning developed in \cite{Cor05,CFM04,Ngu14a} for the purpose of studying concavity principles for marginals with geometric considerations developed by Kolesnikov and Milman in \cite{KM18} in the context of weighted Brunn--Minkowski inequalities in Riemannian manifolds. This mixed geometric-functional application of the $L_2$ method appears to be new and we expect it to be useful in other contexts as well. We refer to Section \ref{sec:2} for a comparison of the two implementations of the $L_2$ method, for sets versus for functions,  and some perspective on Bochner--H\"ormander's $L_2$ method in modern Brunn--Minkowski theory.


\subsection{Poincar\'e inequalities associated to weighted concavity principles}

In the pioneering article \cite{BL76}, Brascamp and Lieb showed that  Pr\'ekopa's theorem follows 
from a fundamental Poincar\'e-type inequality for log-concave measures, recalled below in~\eqref{eq:bl-var} and often referred to as the Brascamp--Lieb variance inequality. In fact, the Brascamp--Lieb inequality is equivalent to Prékopa's theorem, see e.g. \cite{CK12}. A proof of the Brascamp--Lieb inequality using the Prékopa--Leindler inequality, inspired by an argument of Maurey \cite{Mau91}, was given by Bobkov and Ledoux \cite{BL00} and extended to dimensional functional inequalities in~\cite{BL09a}. In a similar spirit,  Nguyen \cite{Ngu14b} investigated Poincar\'e-type inequalities which correspond to the concavity principles for \eqref{eq:varphi}.
Building upon this line of research, we deduce the following Poincar\'e--Brascamp--Lieb inequalities for weighted $\beta$-concave measures and even functions via Theorem \ref{thm:bbl}.

\begin{theorem} \label{thm:poincare}
Let $w:[0,\infty)\to(-\infty,\infty]$ be an increasing function such that $t\mapsto w(e^t)$ is convex on $\R$ and let $\mu$ be the measure on $\R^n$ with density $e^{-w(|x|)}$.  
Let also $C\subseteq\R^n$ be a symmetric convex set and consider an even concave $C^2$ function $\Phi:C\to \R_+$ such that $\diff\nu_\beta(x) = \Phi(x)^\beta\,\diff\mu(x)$ is a probability measure supported on $C$.  Then, for any even locally Lipschitz function $f\in L_2(\nu_\beta)$, setting $g=f\Phi$, we have the inequality
\begin{equation} \label{eq:new-poin}
(\beta-1) \mathrm{Var}_{\nu_\beta}(f) \leq \rmint_C \frac{\big\langle (-\nabla^2 \Phi) \nabla g, \nabla g\big\rangle}{\Phi}\,\diff\nu_\beta + \frac{n}{\beta+n} \left(\rmint_{C} f\,\diff\nu_\beta\right)^2.
\end{equation}
\end{theorem}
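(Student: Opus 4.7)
The plan is to deduce~\eqref{eq:new-poin} from Theorem~\ref{thm:bbl} by the classical linearization technique of Brascamp--Lieb, Bobkov--Ledoux and Maurey, adapted to this dimensional setting. One applies the concavity principle to a short one-parameter family of even concave functions $\tilde \Phi(t, \cdot)$ that reduces to $\Phi$ at $t = 0$ and whose first-order perturbation encodes the test function $f$; the Poincar\'e-type estimate is then read off from the condition $\varphi''(0) \leq 0$.

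Concretely, given smooth bounded even $f$ on $C$ with $g = f \Phi$ (which is even), I would set
$$\tilde\Phi(t, x) \eqdef \Phi(x) + t g(x) - \frac{t^2}{2} \sigma(x),$$
on $\Omega = (-\varepsilon, \varepsilon) \times C$ for some $\varepsilon > 0$, and an even function $\sigma \geq 0$ chosen minimally so that $\tilde\Phi$ is jointly concave on $\Omega$. By a Schur complement applied to the block Hessian
$$\nabla^2_{(t, x)} \tilde\Phi \big|_{t = 0} = \begin{pmatrix} -\sigma(x) & \nabla g(x)^{\top} \\ \nabla g(x) & \nabla^2 \Phi(x) \end{pmatrix},$$
joint concavity at $(0, x)$ is equivalent to a pointwise quadratic constraint relating $\sigma$, $\nabla g$ and $-\nabla^2 \Phi$. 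For $\varepsilon$ small this condition propagates to $\Omega$, each slice $\tilde\Phi(t, \cdot)$ is even, and Theorem~\ref{thm:bbl} applies: $\varphi(t) = \bigl( \int_C \tilde\Phi(t, x)^\beta \, \diff\mu(x) \bigr)^{1/(\beta + n)}$ is concave in a neighborhood of $0$.

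Writing $F(t) = \varphi(t)^{\beta + n}$, concavity of $\varphi$ at $t = 0$ is equivalent to the scalar inequality $F(0) F''(0) \leq \tfrac{\beta + n - 1}{\beta + n} F'(0)^2$. Since $\nu_\beta$ is a probability measure, direct differentiation gives
$$F(0) = 1, \quad F'(0) = \beta \int f \, \diff\nu_\beta, \quad F''(0) = \beta(\beta - 1) \int f^2 \, \diff\nu_\beta - \beta \int \frac{\sigma}{\Phi} \, \diff\nu_\beta.$$
Substituting, choosing $\sigma$ optimally so that $\int \sigma / \Phi \, \diff\nu_\beta$ realizes the quadratic form in the integrand of~\eqref{eq:new-poin}, and rearranging with the identity $\tfrac{\beta(\beta + n - 1)}{\beta + n} - (\beta - 1) = \tfrac{n}{\beta + n}$, produces~\eqref{eq:new-poin}.

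The main obstacle is the precise choice of the admissible $\sigma$: it must simultaneously satisfy the pointwise Schur complement constraint and yield exactly the integrand $\langle (-\nabla^2 \Phi) \nabla g, \nabla g \rangle / \Phi$ of~\eqref{eq:new-poin}. Degeneracies of $\nabla^2 \Phi$ are handled by mollifying $\Phi$ into a strictly concave smooth function and passing to the limit at the end. The evenness hypothesis on $f$ is essential, since it ensures that $g$ and $\sigma$ are even and thus that Theorem~\ref{thm:bbl} can be invoked; the same argument breaks irreparably for non-even $f$. Finally, the extension from smooth bounded $f$ to general locally Lipschitz $f \in L_2(\nu_\beta)$ is a routine density argument via dominated convergence on both sides of~\eqref{eq:new-poin}.
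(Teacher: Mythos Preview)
Your proposal is correct and follows essentially the same route as the paper's proof. The paper makes the explicit choice $\sigma(x)=\langle(-\nabla^2\Phi(x))^{-1}\nabla g(x),\nabla g(x)\rangle$ (which is exactly the minimal $\sigma$ dictated by your Schur complement), adds a uniform $-\tfrac{\e}{2}(|x|^2+t^2)$ to enforce strict concavity and positivity, applies Theorem~\ref{thm:bbl}, reads off the second-derivative inequality, and lets $\e\to0$; your derivative computations and the algebraic identity $\tfrac{\beta(\beta+n-1)}{\beta+n}-(\beta-1)=\tfrac{n}{\beta+n}$ match theirs exactly.
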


When $\mu$ is the Lebesgue measure, \eqref{eq:new-poin} coincides with the endpoint $r=0$ of Nguyen's \cite[Theorem~2]{Ngu14b} under 
symmetry assumptions for $f$.  It is clear that variants of our methods can give weighted versions of \cite[Theorem~2]{Ngu14b} for arbitrary values of $r$ but, in lack of a specific application and for the sake of conciseness, \mbox{we defer these investigations to future works.}

Let us mention that instead of deriving Theorem~\ref{thm:poincare} from Theorem \ref{thm:bbl}, one can alternatively  give a direct proof of it using the $L_2$ arguments of  the proof of Theorem \ref{thm:bbl}, as in~\cite{Ngu14b}. Since our main emphasis in the present paper is on concavity principles, we chose not to repeat this reasoning or to discuss how to extract the relevant arguments for a direct proof.


\subsection{Log-concavity principles for weighted marginals}

Theorem \ref{thm:bbl} is a satisfactory weighted extension of the dimensional concavity principle for~\eqref{eq:varphi}
with respect to rotationally invariant log-concave measures under evenness assumptions. However, 
letting $\beta\to\infty$ reduces it to preservation of log-concavity, as in Pr\'ekopa's theorem. Therefore, it fails to capture more refined log-concavity properties which may crucially rely on central symmetry. The prime example of such a property is the functional version of the B-conjecture \cite{CR20},  postulating that if $V,W:\R^n\to\R\cup\{\infty\}$ are even convex functions, then the function $\alpha:\R\to\R_+$ given by
\begin{equation} \label{eq:fun-B}
\forall \ t\in\R,\qquad \alpha(t) \eqdef \rmint_{\R^n} e^{-V(e^tx)-W(x)}\,\diff x
\end{equation} 
is log-concave.  In view of the result of \cite{CR23}, this property holds when $W$ is further assumed to be rotationally invariant.  
Our next theorem offers an extension of the B-inequality \eqref{eq:B-conj} in this case. 

\begin{theorem}
\label{cor:prekopa}
Fix $\kappa\in[0,1]$.   Let $w:[0,\infty)\to(-\infty,\infty]$ be an increasing function such that $t\mapsto w(e^t)$ is convex on $\R$ and let $\mu$ be the measure on $\R^n$ with density $e^{-w(|x|)}$.   Moreover, let $V:\R^{n+1}\to\R$ be a $C^2$ function such that for each $t$, the\mbox{ function $V(t,\cdot)$ is even on $\R^n$ and}
\begin{equation} \label{eq:assum-pr-cor}
\forall \ (t,x)\in\R^{n+1},\qquad \nabla^2V(t,x) \succeq \kappa \begin{pmatrix}
      \langle \nabla_x V(t,x), x\rangle &   \nabla_xV(t,x)^\ast \\
    \nabla_x V(t,x)  & {\bf 0}_{n\times n}
    \end{pmatrix}.
\end{equation}
Then, the function given by
\begin{equation}
\forall \ t\in\R,\qquad  \alpha(t) \eqdef  \rmint_{\R^n} e^{-V(t,x)}\,\diff\mu(x)
 \end{equation}
is log-concave.
\end{theorem}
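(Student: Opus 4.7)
The strategy is to execute the local $L_2$/Bochner-type argument underlying Theorem~\ref{thm:bbl} at the log-concave endpoint ($\beta\to\infty$). A naive attempt to deduce Theorem~\ref{cor:prekopa} from Theorem~\ref{thm:bbl} by taking $\Phi_\beta = (1 - V/\beta)_+$ and letting $\beta\to\infty$ is obstructed: the hypothesis \eqref{eq:assum-pr-cor} does not force joint convexity of $V$ in $(t,x)$ when $\kappa>0$ (the rank-one matrix $M$ is indefinite), so $\Phi_\beta$ is not concave and Theorem~\ref{thm:bbl} does not apply. One must therefore run the argument directly at the log-concave level.

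After standard regularization (smoothing $V$ and perturbing $\nabla_x^2 V$ by $\varepsilon I$) and by translation invariance in $t$, it suffices to prove $(\log\alpha)''(0)\leq 0$. Let $d\rho(x):=\alpha(0)^{-1}e^{-V(0,x)}\,d\mu(x)$ denote the associated even probability measure on $\R^n$. A direct differentiation under the integral sign yields
\[
(\log\alpha)''(0) \;=\; \mathrm{Var}_\rho[\partial_t V(0,\cdot)] \;-\; \E_\rho[\partial_{tt}V(0,\cdot)].
\]
Applying a Schur complement argument to the matrix inequality \eqref{eq:assum-pr-cor} (valid because the $(x,x)$-block $\nabla_x^2V$ is positive definite after perturbation) gives the pointwise lower bound
\[
\partial_{tt}V \;\geq\; \kappa\,\langle \nabla_xV,x\rangle \;+\; \big\langle (\nabla_x^2V)^{-1}\xi,\,\xi\big\rangle, \qquad \xi \;:=\; \partial_t\nabla_xV - \kappa\,\nabla_xV.
\]
Integrating against $\rho$, the problem reduces to proving the variance inequality
\[
\mathrm{Var}_\rho[\partial_tV] \;\leq\; \kappa\,\E_\rho[\langle \nabla_xV,x\rangle] \;+\; \E_\rho\big\langle (\nabla_x^2V)^{-1}\xi,\,\xi\big\rangle.
\]

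This last variance inequality is a rotation-invariant Brascamp--Lieb-type bound on the even probability $\rho$, applied to the even test field $f=\partial_tV$. The shift $-\kappa\nabla_xV$ hidden in $\xi$ (equivalently, $\xi=\nabla_x(\partial_tV-\kappa V)$) is precisely the gauge correction produced by integration by parts against the rotationally invariant weight $e^{-w(|x|)}$ on even test fields, in the spirit of \cite{KM18,CR23}, while the additional dimensional term $\kappa\,\E_\rho[\langle\nabla_xV,x\rangle]$ encodes the $B$-type improvement for rotationally invariant measures and is exactly what is needed to match the Schur complement bound. Establishing this variance inequality is the main obstacle and requires carrying through the Bochner/$L_2$ computation underpinning Theorem~\ref{thm:bbl} directly at the log-concave level, in close analogy with the direct $L_2$ proof of Theorem~\ref{thm:poincare} alluded to by the authors immediately after its statement. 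Once it is in hand, combination with the Schur bound gives $(\log\alpha)''(0)\leq 0$ and hence the log-concavity of $\alpha$.
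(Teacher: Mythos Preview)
Your proposal has a genuine gap: you reduce the problem to the variance inequality
\[
\mathrm{Var}_\rho[\partial_t V]\;\le\;\kappa\,\E_\rho[\langle\nabla_xV,x\rangle]\;+\;\E_\rho\big\langle(\nabla_x^2V)^{-1}\xi,\xi\big\rangle,\qquad \xi=\nabla_x(\partial_tV)-\kappa\nabla_xV,
\]
and then explicitly defer its proof (``establishing this variance inequality is the main obstacle\ldots''). The heuristic you offer---that the $-\kappa\nabla_xV$ shift is ``precisely the gauge correction produced by integration by parts against $e^{-w(|x|)}$''---is not a proof and does not match how the radial structure is actually exploited. To verify this inequality one must in any case introduce the solution $u$ of $\mathscr{L}_\nu u=\partial_tV-\E_\nu[\partial_tV]$, use the variance identity
\[
\mathrm{Var}_\nu[\partial_tV]=2\E_\nu[\langle\nabla_x(\partial_tV),\nabla u\rangle]-\E_\nu[\langle\nabla_x^2V\,\nabla u,\nabla u\rangle]-\E_\nu\big[\|\nabla^2u\|_{\mathrm{HS}}^2+\langle\nabla^2W\nabla u,\nabla u\rangle\big],
\]
bound the last (Bochner) term from below via hereditary convexity \eqref{eq:x2}, apply a pointwise Cauchy--Schwarz to the first two terms, and finally check a quadratic discriminant condition that uses $\kappa\le1$. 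In other words, completing your route requires exactly the $L_2$ machinery of the paper plus extra work; the Schur complement step buys nothing.

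The paper's proof avoids this detour. Rather than taking the pointwise-optimal Schur complement of \eqref{eq:assum-pr-cor}, it applies the matrix hypothesis only along the specific direction $X=(1,-\nabla u)$, where $u$ solves the Poisson equation above. This yields the exact representation
\[
-(\log\alpha)''\;=\;\E_\nu\big[\langle\nabla^2_{(t,x)}V\,X,X\rangle\big]\;+\;\E_\nu\big[\|\nabla^2u\|_{\mathrm{HS}}^2+\langle\nabla^2W\nabla u,\nabla u\rangle\big].
\]
The hypothesis \eqref{eq:assum-pr-cor} bounds the first term by $\kappa\big(\E_\nu[\langle\nabla_xV,x\rangle]-2\E_\nu[\langle\nabla_xV,\nabla u\rangle]\big)$, hereditary convexity \eqref{eq:x2} bounds the second by $\big(\E_\nu[\langle\nabla_xV,\nabla u\rangle]\big)^2\big/\E_\nu[\langle\nabla_xV,x\rangle]$ (using $\int\mathscr{L}_\nu=0$ to identify numerator and denominator), and the sum is $\ge0$ precisely because $\kappa\le1$. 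No intermediate Brascamp--Lieb-type inequality in your form is needed.
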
 

With $\kappa=0$, this result provides some new log-concavity preservation in the even case. When $\mu$ is itself log-concave (in the notation of the Theorem, this corresponds to the case where $w$ is convex, which is stronger than the convexity of $t\mapsto w(e^t)$), this follows without the evenness assumption from Pr\'ekopa's theorem \cite{Pre73}, as we already mentioned,  because products of log-concave functions are log-concave. However, such a self-improvement property is not a priori evident for non-log-concave $\mu$, and for this central symmetry plays a key role, presumably.

With $\kappa=1$, Theorem \ref{cor:prekopa}  in fact contains the functional B-theorem \eqref{eq:fun-B}, as the Hessian of the function given by $\R^{n+1}\ni (t,x)  \mapsto \Psi(t,x)\eqdef V(e^tx)$ satisfies
 \begin{equation} \label{eq:b-hessian}
 \begin{split}
 \big\langle  \nabla^2 \Psi(t,x)\cdot &(u_0,\bar{u}), (u_0,\bar{u})\big\rangle 
 \\ & = e^{2t}\big\langle \nabla^2 V(e^tx)\cdot (u_0x+\bar{u}), (u_0x+\bar{u})\big\rangle + 2u_0 e^t \langle \nabla V(e^tx),\bar{u}\rangle + u_0^2 e^t \langle V(e^tx), x\rangle
 \\ & = e^{2t}\big\langle \nabla^2 V(e^tx)\cdot (u_0x+\bar{u}), (u_0x+\bar{u})\big\rangle + 2u_0 \langle \nabla_x\Psi(t,x),\bar{u}\rangle + u_0^2 \langle \nabla_x\Psi(t,x),x\rangle,
 \end{split}
 \end{equation}
for every $(u_0,\bar{u})\in\R^{n+1}$. 

The existence of a unified approach to Theorem \ref{thm:bbl} and Theorem \ref{cor:prekopa} offers a conceptual explanation for the fact that the refined spectral inequalities for rotationally invariant measures discovered in \cite{CR23} readily yielded both the B-theorem and the dimensional Brunn--Minkowski inequality for such measures.  Moreover, it highlights a functional inequality inspired by \cite{EM21} which, if true, would imply both conjectures for general even measures (see Question \ref{q:ce}).  Last but not least, as shall be explained in Remark \ref{rem:B-proofs}, the proof of Theorem \ref{cor:prekopa} in the case $\kappa=1$ which implies the functional B-theorem \eqref{eq:fun-B}, also yields a new proof of the B-theorem for such measures (see also Remark \ref{rem:B-BL} and the Appendix for a \emph{very old} proof of the Gaussian B-theorem).


\subsection*{Structure of the paper} In Section \ref{sec:2}, we present some preliminaries on H\"ormander's $L_2$ method, along with a concise discussion of its applications in Brunn--Minkowski theory. In Section \ref{sec:comp}, we use the $L_2$ method to derive a general formula for the second derivative of weighted marginals in terms of solutions to certain elliptic PDE, while in Sections \ref{sec:main} and \ref{sec:log-conc} we combine this formula with spectral estimates to deduce our weighted concavity principles, Theorems \ref{thm:bbl} and \ref{cor:prekopa}. Finally, in Section \ref{sec:poincare}, we use Theorem \ref{thm:bbl} to derive the weighted Poincar\'e-type inequality of Theorem \ref{thm:poincare} and in Section \ref{sec:last} we present some further remarks and open questions that arise from this work.


\subsection*{Related work} While the present paper was in final stages of preparation, the preprint \cite{AL25} of Aishwarya and Li appeared on arXiv, containing results closely related to Theorem \ref{thm:bbl} in the special case that the rotationally invariant measure $\mu$ is the standard Gaussian.  More specifically, Theorem 1 is \emph{equivalent} to a $\mu$-weighted analogue of the Borell--Brascamp--Lieb inequalities \eqref{eq:bbl1}, \eqref{eq:bbl2} when the even functions $f^\kappa$ and $g^\kappa$ are assumed to be concave on their supports.  In contrast to the local $L_2$ method that we develop here to tackle these problems, the authors of \cite{AL25} use a refreshing information theoretic approach to derive the same conclusion when $f$ and $g$ are assumed to be merely log-concave,  yet only in the special case that $\mu$ is the standard Gaussian measure.


\subsection*{Acknowledgements} We are very grateful to Emanuel Milman for constructive feedback and to Tomasz Tkocz for many useful discussions. 


\section{H\"ormander's $L_2$ method in Brunn--Minkowski theory} \label{sec:2} 


\subsection{Generalities on the $L_2$ method} Let $\nu$ be a probability measure on $\R^n$ whose density $e^{-V}:\R^n\to\R_+$ is $C^2$ smooth in its support. Throughout this section, we shall tacitly assume that $U=\mr{supp}(\nu)=\{x: \ V(x)<\infty\}$ is an open convex set in $\R^n$ with $C^2$ smooth boundary $\Sigma$. Associated to $\nu$ is the elliptic operator $\ms{L}_\nu$, whose action on a $C^2$ function $u:U\to\R$ is given by
\begin{equation}
\ms{L}_\nu u = \Delta u - \langle \nabla V, \nabla u\rangle.
\end{equation}
For a pair of $C^2$ smooth functions $u,v$, this operator satisfies the integration by parts formula
\begin{equation} \label{eq:bp}
\rmint_U \ms{L}_\nu u \cdot v \,\diff\nu = -\rmint_U \langle \nabla u, \nabla v\rangle\,\diff\nu+\rmint_{\Sigma} \frac{\partial u}{\partial \hat{\nu}_{\Sigma}}\cdot v \,\diff\nu_{\Sigma}.
\end{equation}
Here we denoted by $\frac{\partial}{\partial \hat{\nu}_{\Sigma}}$ the outward pointing normal derivative on $\Sigma$ and by $\nu_{\Sigma}\equiv\nu$ the measure with density $e^{-V}$ with respect to the $(n-1)$-dimensional Hausdorff measure on $\Sigma$. In particular, this implies that $\ms{L}_\nu$ can be extended to a self-adjoint operator on $L_2(\nu)$ with dense domain $\mathcal{D}$ corresponding to Neumann condition on the boundary so that \eqref{eq:bp} still holds.

As the potential $V$ and the underlying domain $U$ are $C^2$ smooth, all the classical elliptic existence, uniqueness and regularity results \cite{GT01} apply to our setting. In particular, suppose that $f:U\to\R$ and \mbox{$\psi:\Sigma\to\R$ are two $C^1$ smooth functions. Then, the equation}
\begin{equation}
\ms{L}_\nu u =f \ \  \mbox{ in } U
\end{equation} \label{eq:compat}
has a $C^2$ solution $u$ with Neumann boundary data $\frac{\partial u}{\partial \hat{\nu}_\Sigma} = \psi$ under the compatibility condition
\begin{equation}
\rmint_U f \,\diff\nu = \rmint_{\Sigma} \psi\,\diff \nu.
\end{equation}
In particular, the equation $\ms{L}_\nu u = f$ admits a solution satisfying $\frac{\partial u}{\partial \hat{\nu}_\Sigma} \equiv 0$ provided that $\rmint_U f\,\diff\nu=0$. We refer to \cite[Theorem~2.5]{KM18} for a more precise statement involving H\"older regularity classes.

H\"ormander developed a powerful method for solving the $\overline{\partial}$ equation (see \cite{Hor65,Hor94}), while emphasizing the role of convexity (or plurisubharmonicity) of the underlying domain and the potential $V$. Since H\"ormander's seminal work, his $L_2$ method has also become an indespensable tool to obtain spectral inequalities when combined with variants of the classical Bochner formula from differential geometry. Due to these applications, the name \emph{Bochner method} is also frequently encountered in the literature referring to the $L_2$ method. Put simply, the $L_2$ method for spectral inequalities amounts to the following: to prove a Poincar\'e-type inequality for a function $f$ with respect to a measure $\nu$, solve the equation $\ms{L}_\nu u =f$, prove the dual inequality for $u$ using a variant of the Bochner formula, and dualize back to deduce the desired inequality for $f$. We refer to \cite[Lemma~1]{BC13} for a precise formulation and to \cite{CFM04,Cor05,CK12,BC13,Ngu14b,KM17,KM18,KM22,CR23} for examples of applications.

For our purposes, the relevant Bochner-type identity is a weighted version of Reilly's formula for manifolds with boundary \cite{Rei77}, see \cite{MD10,KM17}. It asserts that if $u:\overline{U}\to\R$ is a $C^2$ function with normal derivative $\psi = \frac{\partial u}{\partial \hat{\nu}_\Sigma}$, then 
\begin{equation} \label{eq:bochner}
\begin{split}
\rmint_U \big( \ms{L}_\nu u\big)^2\,\diff\nu = \rmint_U \|\nabla^2u\|_{\mr{HS}}^2 & + \langle \nabla^2 V\nabla u,
\nabla u\rangle \,\diff \nu
\\& +\rmint_\Sigma \msf{H}_{\nu,\Sigma} \cdot \psi^2 + \mr{I\!I}(\nabla_\Sigma u,\nabla_\Sigma u) - 2\langle \nabla_\Sigma u, \nabla_\Sigma \psi\rangle\,\diff\nu,
\end{split}
\end{equation}
where $\nabla_\Sigma u$ is the tangential gradient of $u$ on $\Sigma$,  $\mr{I\!I}(\cdot,\cdot)$ is the second fundamental form of $\Sigma$ and $\msf{H}_{\nu,\Sigma} \eqdef \mathrm{tr}\mr{I\!I} - \langle \nabla V, \hat{\nu}_\Sigma\rangle$ is the $\nu$-weighted mean curvature of $\Sigma$.
We refer to \cite[Theorem~2.1]{KM18} for a more general version of the formula which is valid for functions on a Riemannian manifold with arbitrary Neumann boundary data. Dualizing \eqref{eq:bochner} with $\psi\equiv0$ and using \cite[Lemma~1]{BC13}, one immediately deduces the celebrated Brascamp--Lieb variance inequality:~if $\nu$ is strictly log-concave, i.e.~$\nabla^2 V\succ {\bf 0}_{n\times n}$ on $\R^n$, then every locally Lipschitz function $f\in L_2(\nu)$ satisfies
\begin{equation} \label{eq:bl-var}
\mathrm{Var}_\nu(f) \leq \rmint_{\R^n} \big\langle \big(\nabla^2 V\big)^{-1} \nabla f, \nabla f\big\rangle\,\diff\nu.
\end{equation}

In the context of Brunn--Minkowski-type inequalities and their functional versions,  one wants to study the concavity properties of functions of the form
 \begin{equation} \label{eq:gen-conca}
 \alpha(t) \eqdef \rmint_{\Omega_t} e^{-G(t,x)}\,\diff\mu(x),
 \end{equation}
for some smooth family $\{\Omega_t\}$ of convex sets in $\R^n$ and a smooth function $G$ defined on a subset of $\R^{n+1}$.  The $L_2$ method has been applied to this setting in two fairly distinct ways, depending on the nature of the sets $\Omega_t$ and the function $G$,  which we shall now proceed to describe.


\subsection{Local proofs of geometric inequalities} The Brunn--Minkowski inequality for convex sets corresponds to the $\frac1n$-concavity of the function \eqref{eq:gen-conca} on its support when $\mu$ is the Lebesgue measure, $G\equiv0$ and $\Omega_t = (1-t)K+tL$ for two convex sets $K,L$ in $\R^n$, where $t\in[0,1]$. The first infinitesimal proof of this inequality originates in celebrated work of Hilbert (see \cite{Hil12} or \cite{Hor94}).  This important approach was revisited by Colesanti \cite{Col08}, who, conversely, used the Brunn--Minkowski inequality to derive an equivalent Poincar\'e-type inequality for functions defined on the boundary of convex sets.  Morally,  the sets $\{\Omega_t\}_{t\in(0,\eta)}$ are realized as infinitesimal perturbations of $K$ by a vector field of the form $\psi\cdot\hat{\nu}_{\partial K}$ and thus the resulting concavity can be expressed as a functional inequality for $\psi:\partial K\to\R$.  Finally, a direct proof of Colesanti's inequality (in the more general setting of weighted Riemannian manifolds), and thus a new local proof of the Brunn--Minkowski inequality for convex sets, was discovered by Kolesnikov and Milman in \cite{KM18} using the Bochner method.

Let $K$ be an open convex set in $\R^n$ and $\psi:\partial K\to\R$ a $C^1$ smooth function. To prove Colesanti's inequality for $\psi$, Kolesnikov and Milman consider a $C^2$ smooth function $u:K\to \R$ such that 
\begin{equation} \label{eq:km}
\begin{cases}
\Delta u\equiv\kappa, & \mbox{in } K \\
\frac{\partial u}{\partial \hat{\nu}_{\partial K}} = \psi, & \mbox{on } \partial K
\end{cases},
\end{equation}
where $\kappa$ is an appropriate constant so that the compatibility condition \eqref{eq:compat} is satisfied. Afterwards, they use the Bochner formula \eqref{eq:bochner} to prove a dual to Colesanti's inequality for $u$ and this completes the proof. This approach was also extended to prove dimensional Brunn--Minkowski inequalities for measures $\mu$ in a series of more recent works \cite{KL21,EM21,CR23} by replacing $\Delta$ with the operator $\ms{L}_\mu$.


\subsection{Local proofs of concavity principles} When it comes to the \emph{functional} counterparts of geometric inequalities, the first application of infinitesimal methods can be traced back to Brascamp and Lieb \cite[Theorem~4.2]{BL76} who used inequality \eqref{eq:bl-var} to give an alternative proof of Pr\'ekopa's theorem \cite{Pre73}. The first applications of the $L_2$ method towards concavity principles are due to \cite{CFM04, Cor05} for the functional Gaussian B-inequality and (complex) Pr\'ekopa's theorem respectively, and were partly inspired by a local proof of Pr\'ekopa's theorem due to Ball, Barthe and Naor \cite{BBN03}.  This $L_2$ approach was later extended to encompass the dimensional versions of Pr\'ekopa's theorem due to Borell and Brascamp--Lieb \cite{Bor75,BL76} in the works of Nguyen \cite{Ngu14a,Ngu14b}.

Pr\'ekopa's theorem asserts that the marginal function \eqref{eq:gen-conca} is log-concave on $\R$ when $\mu$ is the Lebesgue measure,  $\Omega_t=\R^n$ and $G:\R^{n+1}\to\R$ is a convex function.  Following the exposition of the proof of \cite{Cor05} in \cite[Introduction]{CK12}, to prove that $(-\log\alpha)''(t)\geq0$ one considers the log-concave probability measure $\nu_t$ on $\R^n$ with density proportional to $e^{-G(t,\cdot)}$ and needs to prove a Poincar\'e-type inequality for the function $\partial_t G(t,\cdot)$ with respect to $\nu_t$. To this end, the works \cite{Cor05,CK12} then consider a $C^2$ smooth function $u_t:\R^{n}\to\R$ satisfying the elliptic PDE
\begin{equation} \label{eq:subforb}
\ms{L}_{\nu_t} u_t = \partial_tG(t,\cdot) - \rmint_{\R^n} \partial_t G(t,\cdot) \,\diff\nu_t
\end{equation}
and prove the dual inequality for $u_t$ using the Bochner formula \eqref{eq:bochner} on $\R^n$.  Here,  a zero Neumann boundary condition at infinity is again implicitly imposed.

In the subsequent works \cite{Ngu14a,Ngu14b} by Nguyen, which deal with the dimensional versions of Pr\'ekopa's theorem, one is forced to prove concavity principles for functions defined on convex subsets of $\R^n$.  To prove that if $\Phi(t,x)$ is a concave function on a product set $I\times U$, then the same holds for the marginal \eqref{eq:varphi}, Nguyen considers the probability measure $\nu_t$ with density proportional to $\Phi(t,\cdot)^\beta$ in $U$ and a $C^2$ function $u_t:\overline{U}\to \R$ satisfying the elliptic PDE with Neumann boundary condition
\begin{equation} \label{eq:ce}
\begin{cases}
\ms{L}_{\nu_t} u_t = \frac{\partial_t \Phi(t,\cdot)}{\Phi(t,\cdot)} - \rmint_U \frac{\partial_t \Phi(t,\cdot)}{\Phi(t,\cdot)}\,\diff\nu_t, & \mbox{in } U \\
\frac{\partial u}{\partial \hat{\nu}_{\partial U}} \equiv 0, & \mbox{on } \partial U
\end{cases}.
\end{equation}
Afterwards, the dual inequality for $u_t$ is proven using the weighted Bochner-type formula \eqref{eq:bochner}.


\subsection{Our contribution}
We draw the reader's attention to these two very distinct implementations of the $L_2$ method in Brunn--Minkowski theory:~\eqref{eq:km} has been used to give infinitesimal proofs of geometric inequalities and \eqref{eq:subforb}, \eqref{eq:ce} for their functional counterparts when the underlying domain is a product set.  As already explained in the introduction, Nguyen's reasoning from \cite{Ngu14a,Ngu14b} appears insufficient to recover the case $\beta>0$ of the concavity principle for the marginal \eqref{eq:varphi} in full generality as one cannot a priori assume that the underlying domain $\Omega$ has a product structure $I\times U$.  To this end,  the  present work contains a twofold contribution in our understanding of $L_2$ methods in convexity.  First, we introduce a mixture of the geometric and functional implementations of the $L_2$ method to express the second derivative of the function \eqref{eq:gen-conca} when both the sets $\{\Omega_t\}$ and the function $G$ are nontrivial.  Secondly, we apply these considerations to the study of \emph{weighted} Brunn--Minkowski inequalities under symmetry assumptions, showing that the spectral tools used in \cite{EM21,CR23} suffice to yield the local versions of the weighted concavity principles presented in Theorem \ref{thm:bbl}.  As a side-result,this new perspective also yields a new proof of the B-inequality for rotationally invariant measures via the more general concavity principle of Theorem \ref{cor:prekopa}.


\section{A formula for the second derivative of marginals} \label{sec:comp}

In this section, we present a formula for the second derivative of marginals using the $L_2$ method.  Our computation relies on a mixture of the methods of Nguyen \cite[Theorem~1.1]{Ngu14a} and Kolesnikov--Milman \cite[Section~3.1]{KM18}, albeit with several substantial modifications which are necessary to treat the combination of the functional and geometric settings treated here.


\subsection{Perturbations of convex sets} Before proceeding to the actual computation, we present some preliminaries related to the variation of measures of convex sets under small perturbations.  Let $\{\Omega_t\}_{t\in(-\eta,\eta)}$ be a family of bounded convex sets in $\R^n$ with nonempty interiors and assume that each $\Omega_t$ is strictly convex with $C^2$ boundary $\Sigma_t$. Moreover, let $g:(-\eta,\eta)\times\Sigma_0\to\R$ be such that
\begin{equation} \label{eq:defn-g}
\forall \ (t,y)\in(-\eta,\eta)\times\Sigma_0,\qquad h_{\Omega_t} \big(\hat{\nu}_{\Sigma_0}(y)\big) = g(t,y),
\end{equation}
where as usual we denote by $h_L:\mb{S}^{n-1}\to\R$ the support function of a convex body $L$ in $\R^n$ and by $\hat{\nu}_L:\partial L\to\mb{S}^{n-1}$ its outward pointing unit normal vector.  We shall assume that $g$ is jointly $C^2$ smooth. Moreover, consider the mapping $F:(-\eta,\eta)\times \Sigma_0  \to \R^n$ given by
\begin{equation} \label{eq:defn-F}
\forall \ (t,y)\in(-\eta,\eta)\times\Sigma_0,\qquad F(t,y) = \hat{\nu}_{\Sigma_t}^{-1} \circ \hat{\nu}_{\Sigma_0}(y).
\end{equation}
Observe that due to the strict convexity and regularity assumptions imposed on $\Omega_t$,  $F_0\eqdef F(0,\cdot) \equiv \mathrm{Id}$ and each $F_t \eqdef F(t,\cdot): \Sigma_0\to\Sigma_t$ is a diffeomorphism that satisfies the compatibility equation
\begin{equation} \label{eq:compat}
\forall \ (t,y)\in(-\eta,\eta)\times\Sigma_0,\qquad \hat{\nu}_{\Sigma_t} \big( F(t,y) \big) = \hat{\nu}_{\Sigma_0}(y).
\end{equation}  
Its $t$-derivative is calculated in the following lemma which builds upon \cite[Proposition~6.3]{KM18}, where the special case $\Omega_t = K+tL$,  where $t\in(0,\infty)$, was treated.

\begin{lemma} \label{lem:Minkowski-var}
Under the preceding regularity assumptions on $\{\Omega_t\}_{t\in(-\eta,\eta)}$, we have
\begin{equation} \label{eq:Minkowski-var}
\forall \ (t,y)\in(-\eta,\eta)\times\Sigma_0, \qquad \frac{\partial}{\partial t}F(t,y) = \psi_t(y) \hat\nu_{\Sigma_0}(y) + \mr{I\!I}_{\Sigma_0}^{-1}(y) \nabla_{\Sigma_0} \psi_t(y),
\end{equation}
where $\psi_t:\Sigma_0\to\R$ is given by $\psi_t(y) = \frac{\partial}{\partial t}g(t,y)$.
\end{lemma}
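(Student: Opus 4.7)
The plan is to realize $F(t,y)$ as the $\R^n$-gradient of the support function of $\Omega_t$ evaluated at the direction $\hat\nu_{\Sigma_0}(y)$, and then to split this gradient into its radial and spherical components in order to match the two terms in \eqref{eq:Minkowski-var}. The underlying identity, classical for strictly convex $C^2$ bodies, is: if $h_{\Omega_t}$ is extended $1$-homogeneously to $\R^n\setminus\{0\}$, then for every unit $\xi \in \mb{S}^{n-1}$ the $\R^n$-gradient $\nabla h_{\Omega_t}(\xi)$ equals the unique point $\hat\nu_{\Sigma_t}^{-1}(\xi)\in\Sigma_t$. Combined with \eqref{eq:defn-F} and \eqref{eq:defn-g}, this reads
\[
F(t,y) = \nabla h_{\Omega_t}\bigl(\hat\nu_{\Sigma_0}(y)\bigr) \qquad \text{and} \qquad \psi_t(y) = \partial_t h_{\Omega_t}\bigl(\hat\nu_{\Sigma_0}(y)\bigr).
\]

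Next I would differentiate the first identity in $t$ and exchange $\partial_t$ with $\nabla$ to obtain $\partial_t F(t,y) = \nabla f_t(\hat\nu_{\Sigma_0}(y))$, where $f_t \eqdef \partial_t h_{\Omega_t}$ is itself $1$-homogeneous on $\R^n\setminus\{0\}$. At the unit vector $\xi = \hat\nu_{\Sigma_0}(y)$ I would decompose
\[
\nabla f_t(\xi) \;=\; \langle \nabla f_t(\xi),\xi\rangle\,\xi \;+\; \nabla_{\mb{S}^{n-1}} f_t(\xi),
\]
where the second summand is tangent to $\mb{S}^{n-1}$ at $\xi$. Euler's identity for $1$-homogeneous functions yields $\langle \nabla f_t(\xi),\xi\rangle = f_t(\xi) = \psi_t(y)$, which produces the normal component $\psi_t(y)\hat\nu_{\Sigma_0}(y)$ in \eqref{eq:Minkowski-var}.

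It remains to identify the spherical gradient $\nabla_{\mb{S}^{n-1}} f_t(\hat\nu_{\Sigma_0}(y))$ with $\mr{I\!I}_{\Sigma_0}^{-1}(y)\nabla_{\Sigma_0}\psi_t(y)$. Since $\psi_t = f_t \circ \hat\nu_{\Sigma_0}$ on $\Sigma_0$, the chain rule gives for every tangent $X\in T_y\Sigma_0$,
\[
\langle \nabla_{\Sigma_0}\psi_t(y), X\rangle = \bigl\langle \nabla_{\mb{S}^{n-1}} f_t(\hat\nu_{\Sigma_0}(y)),\, d\hat\nu_{\Sigma_0}(y)\,X\bigr\rangle.
\]
Under the canonical identification $T_{\hat\nu_{\Sigma_0}(y)}\mb{S}^{n-1} \cong T_y\Sigma_0$, the differential $d\hat\nu_{\Sigma_0}(y)$ is exactly the Weingarten/shape operator, which is self-adjoint and whose associated bilinear form is $\mr{I\!I}_{\Sigma_0}(y)$; strict convexity of $\Omega_0$ makes it invertible. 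Inverting it yields the tangential term in \eqref{eq:Minkowski-var}.

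The main obstacle is bookkeeping rather than a genuine analytic difficulty: one must keep the three relevant tangent spaces $T_y\Sigma_0$, $T_{F(t,y)}\Sigma_t$, and $T_{\hat\nu_{\Sigma_0}(y)}\mb{S}^{n-1}$ consistently identified, and carefully distinguish the $\R^n$-gradient of a $1$-homogeneous function from its spherical gradient when invoking Euler's relation. Once these conventions are fixed the argument reduces to a two-line computation, and the Minkowski-sum case $\Omega_t = K+tL$ of \cite[Proposition~6.3]{KM18} falls out as a sanity check: taking $h_{\Omega_t} = h_K + t h_L$ makes $\psi_t = h_L\circ\hat\nu_{\Sigma_0}$ independent of $t$, recovering their formula verbatim.
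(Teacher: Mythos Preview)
Your proof is correct and essentially the same as the paper's: both rest on the support-function identity $\nabla h_{\Omega_t}(\hat\nu_{\Sigma_0}(y)) = F(t,y)$ (equivalently, the spherical version $\nabla_{\mb{S}^{n-1}} h_{\Omega_t} = \hat\nu_{\Sigma_t}^{-1} - h_{\Omega_t}\,\theta$) together with the chain rule through the Weingarten map. The only difference is the order of operations---the paper first decomposes $F(t,y) = g(t,y)\hat\nu_{\Sigma_0}(y) + \mr{I\!I}_{\Sigma_0}^{-1}(y)\nabla_{\Sigma_0}g(t,y)$ and then differentiates in $t$, whereas you differentiate first and then split into normal and tangential parts---which is immaterial since both steps are linear.
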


\begin{proof}
By the definition \eqref{eq:defn-g} of $g$ and the chain rule, we have
\begin{equation}
\nabla_{\Sigma_0}g(t,y) = \nabla_{\Sigma_0} \big( h_{\Omega_t}\circ \hat{\nu}_{\Sigma_0}\big)(y) = \nabla_{\Sigma_0} \hat{\nu}_{\Sigma_0}(y)  \nabla_{\mb{S}^{n-1}} h_{\Omega_t}\big( \hat{\nu}_{\Sigma_0}(y)\big) = \mr{I\!I}_{\Sigma_0}(y) \nabla_{\mb{S}^{n-1}} h_{\Omega_t}\big( \hat{\nu}_{\Sigma_0}(y)\big).
\end{equation}
On the other hand, for an arbitrary strictly convex body $L$,  we have (see, e.g., \cite[Proposition~6.3]{KM18})
\begin{equation}
\forall \ \theta\in\mb{S}^{n-1},\qquad \nabla_{\mb{S}^{n-1}} h_{L}(\theta) = \hat{\nu}_{\partial L}^{-1}(\theta) - \langle \hat{\nu}_{\partial L}^{-1}(\theta),\theta\rangle\theta = \hat{\nu}_{\partial L}^{-1}(\theta) - h_L(\theta)\theta.
\end{equation}
Therefore, combining the above, we derive
\begin{equation}
\nabla_{\Sigma_0}g(t,y) =  \mr{I\!I}_{\Sigma_0}(y) \big\{ \hat{\nu}_{\Sigma_t}^{-1} \circ\hat{\nu}_{\Sigma_0}(y) - h_{\Omega_t}\big(\hat{\nu}_{\Sigma_0}(y)\big) \hat{\nu}_{\Sigma_0}(y)\big\} = \mr{I\!I}_{\Sigma_0}(y) \big\{  F(t,y) - g(t,y) \hat{\nu}_{\Sigma_0}(y) \big\},
\end{equation}
which can be equivalently rewritten as
\begin{equation}
F(t,y) = g(t,y) \hat{\nu}_{\Sigma_0}(y) + \mr{I\!I}_{\Sigma_0}^{-1}(y) \nabla_{\Sigma_0}g(t,y). 
\end{equation}
Taking a derivative in $t$, we readily see that since $g$ is $C^2$,
\begin{equation}
\frac{\partial}{\partial t} F(t,y) = \frac{\partial }{\partial t} g(t,y) \hat{\nu}_{\Sigma_0}(y) + \mr{I\!I}_{\Sigma_0}^{-1}(y) \nabla_{\Sigma_0} \frac{\partial}{\partial t} g(t,y),
\end{equation}
which coincides with \eqref{eq:Minkowski-var} by the definition of $\psi_t$.
\end{proof}

Having computed the $t$-derivative of the parametrization $F_t:\Sigma_0\to\Sigma_t$, we can also derive formulas for the evolution of the measure of the convex sets $\{\Omega_t\}_{t\in(-\eta,\eta)}$.

\begin{lemma}
Let $\diff\rho(x) = e^{-U(x)}\,\diff x$ be an absolutely continuous finite measure on $\R^n$ with $C^1$ smooth density. Under the preceding regularity assumptions on $\{\Omega_t\}_{t\in(-\eta,\eta)}$, we have
\begin{equation} \label{eq:first-var}
\forall \ t\in(-\eta,\eta),\qquad \frac{\diff}{\diff t} \rho(\Omega_t)= \rmint_{\Sigma_t} \psi_t\big(F_t^{-1}(x)\big) \, \diff\rho(x) = \rmint_{\Sigma_0} \psi_t(y) e^{-U(F(t,y))} \mr{Jac} F_t(y)\, \diff y
\end{equation}
and
\begin{equation} \label{eq:second-var}
\frac{\diff^2}{\diff t^2}\bigg|_{t=0} \rho(\Omega_t) = \rmint_{\Sigma_0} \frac{\partial}{\partial t}\bigg|_{t=0}\psi_t \,\diff\rho +\rmint_{\Sigma_0} \msf{H}_{\Sigma_0,\rho} \cdot \psi_0^2 - \langle \mr{I\!I}_{\Sigma_0}^{-1} \nabla_{\Sigma_0} \psi_0, \nabla_{\Sigma_0}\psi_0\rangle\,\diff\rho.
\end{equation}
\end{lemma}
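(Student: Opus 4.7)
My plan is to establish \eqref{eq:first-var} via the Reynolds transport theorem, then differentiate once more at $t=0$ using classical formulas for the evolution of surface area elements and densities along a moving submanifold, and finally reorganize the output by introducing the $\rho$-weighted mean curvature and performing one integration by parts on the closed manifold $\Sigma_0$.

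For the first variation, Lemma \ref{lem:Minkowski-var} identifies the instantaneous velocity of the moving boundary as $\partial_tF(t,y) = \psi_t(y)\hat{\nu}_{\Sigma_0}(y) + \mr{I\!I}_{\Sigma_0}^{-1}(y)\nabla_{\Sigma_0}\psi_t(y)$. The tangential component $\mr{I\!I}_{\Sigma_0}^{-1}\nabla_{\Sigma_0}\psi_t$ is orthogonal to $\hat{\nu}_{\Sigma_0}(y)$, and the compatibility relation \eqref{eq:compat} yields $\hat{\nu}_{\Sigma_t}(F(t,y)) = \hat{\nu}_{\Sigma_0}(y)$, so the normal speed of $\Sigma_t$ at the point $F(t,y)$ equals exactly $\psi_t(y)$. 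The Reynolds transport theorem applied to the smoothly evolving region $\Omega_t$ then gives $\tfrac{\diff}{\diff t}\rho(\Omega_t) = \rmint_{\Sigma_t}\psi_t(F_t^{-1}(x))\,\diff\rho(x)$, and the change of variables $x=F_t(y)$ produces the second equality of \eqref{eq:first-var}.

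For the second variation, I would differentiate the pulled-back version of the first-variation formula at $t=0$, exploiting $F_0 = \mr{Id}$ and $\mr{Jac}\,F_0 \equiv 1$. Setting $X(y) = \psi_0(y)\hat{\nu}_{\Sigma_0}(y) + T_0(y)$ with $T_0 = \mr{I\!I}_{\Sigma_0}^{-1}\nabla_{\Sigma_0}\psi_0$ for the initial velocity field, the classical first-variation-of-area identity gives $\tfrac{\diff}{\diff t}\big|_{t=0}\mr{Jac}\,F_t = \psi_0\,\mr{tr}\,\mr{I\!I}_{\Sigma_0} + \mr{div}_{\Sigma_0}T_0$, while the chain rule gives $\tfrac{\diff}{\diff t}\big|_{t=0}e^{-U(F_t(y))} = -\langle\nabla U(y), X(y)\rangle e^{-U(y)}$. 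Combining, grouping the normal contributions via $\msf{H}_{\Sigma_0,\rho} = \mr{tr}\,\mr{I\!I}_{\Sigma_0} - \langle\nabla U,\hat{\nu}_{\Sigma_0}\rangle$ and using $\langle\nabla U, T_0\rangle = \langle\nabla_{\Sigma_0}U, T_0\rangle$ since $T_0$ is tangential, one finds
\begin{equation*}
\tfrac{\diff}{\diff t}\big|_{t=0}\big[e^{-U(F(t,y))}\mr{Jac}\,F_t(y)\big] = \big[\psi_0\msf{H}_{\Sigma_0,\rho} + \mr{div}_{\Sigma_0,\rho}T_0\big]e^{-U(y)},
\end{equation*}
where $\mr{div}_{\Sigma_0,\rho}$ is the $\rho$-weighted tangential divergence on $\Sigma_0$.

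The proof concludes by integrating the preceding identity against $\psi_0$ over $\Sigma_0$, adding the term $\rmint_{\Sigma_0}\partial_t\psi_t|_{t=0}\,\diff\rho$ produced by the direct $t$-differentiation of $\psi_t$, and applying weighted integration by parts on the closed manifold $\Sigma_0$ (which has no boundary, since $\Omega_0$ is a bounded open set) to rewrite $\rmint_{\Sigma_0}\psi_0\,\mr{div}_{\Sigma_0,\rho}T_0\,\diff\rho = -\rmint_{\Sigma_0}\langle\nabla_{\Sigma_0}\psi_0, T_0\rangle\,\diff\rho = -\rmint_{\Sigma_0}\langle\mr{I\!I}_{\Sigma_0}^{-1}\nabla_{\Sigma_0}\psi_0,\nabla_{\Sigma_0}\psi_0\rangle\,\diff\rho$. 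Matching the three resulting pieces produces exactly \eqref{eq:second-var}. The main subtlety is careful bookkeeping rather than anything deep: splitting $X$ into its normal and tangential components, recognizing that the natural way to absorb the non-tangential terms is through the $\rho$-weighted mean curvature, and noting that the tangential piece $T_0$ carries an extra factor of $\mr{I\!I}_{\Sigma_0}^{-1}$ which, after one integration by parts on $\Sigma_0$, yields precisely the negative quadratic form $\langle\mr{I\!I}_{\Sigma_0}^{-1}\nabla_{\Sigma_0}\psi_0,\nabla_{\Sigma_0}\psi_0\rangle$ appearing on the right-hand side of \eqref{eq:second-var}.
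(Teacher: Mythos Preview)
Your proposal is correct and follows essentially the same route as the paper: differentiate the pulled-back first-variation formula and compute $\partial_t|_{t=0}\big[e^{-U(F(t,y))}\mr{Jac}\,F_t(y)\big]$. The only difference is that the paper outsources this last computation to \cite[Theorem~6.6]{KM18}, whereas you carry it out explicitly by splitting the velocity field into normal and tangential parts, invoking the first variation of area, and then integrating by parts on the closed hypersurface $\Sigma_0$; your version is thus more self-contained but not genuinely different in strategy.
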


\begin{proof}
The first variation of $\rho(\Omega_t)$ only depends on the normal component of the derivative of the parametrization $F(t,\cdot)$ around the identity $F(0,\cdot)$, and thus
\begin{equation}
\frac{\diff}{\diff t} \bigg|_{t=0} \rho(\Omega_t) = \rmint_{\Sigma_0} \psi_0 \,\diff \rho,
\end{equation}
which coincides with \eqref{eq:first-var} at $t=0$.  To derive the formula in general, fix $T\in(-\eta,\eta)$ and notice that we also have
\begin{equation}
\forall \ (t,x)\in (-\eta,\eta)\times\Sigma_T,\qquad h_{\Omega_t} \big( \hat{\nu}_{\Sigma_T}(x)\big) = h_{\Omega_t}\big(\hat{\nu}_{\Sigma_0}\big(F_T^{-1}(x)\big)\big) = g\big(t, F_T^{-1}(x)\big).
\end{equation}
Therefore,  the same reasoning for the function $(t,x)\mapsto g(t,F_T^{-1}(x))$ around $t=T$ yields
\begin{equation*}
\frac{\diff}{\diff t}\bigg|_{t=T} \rho(\Omega_t) = \rmint_{\Sigma_T} \frac{\partial}{\partial t}\bigg|_{t=T} g\big(t,F_T^{-1}(x)\big)\, \diff \rho(x) = \rmint_{\Sigma_T} \psi_T\circ F_T^{-1}(x) \,\diff \rho(x)
\end{equation*}
and the last equality of \eqref{eq:first-var} follows by the change of variables $x=F_T(y)$. 

For the derivation of \eqref{eq:second-var}, we first use \eqref{eq:first-var} to get
\begin{equation}
\frac{\diff^2}{\diff t^2}\bigg|_{t=0} \rho(\Omega_t) = \rmint_{\Sigma_0} \frac{\partial}{\partial t}\bigg|_{t=0}\psi_t \,\diff\rho +\rmint_{\Sigma_0} \psi_0(y) \frac{\partial}{\partial t}\bigg|_{t=0} \Big\{ e^{-U(F(t,y))} \mr{Jac} F_t(y) \Big\} \,\diff y.
\end{equation}
The derivative of the latter quantity is computed in the proof of \cite[Theorem~6.6]{KM18} and plugging in this expression readily yields the desired identity \eqref{eq:second-var}.
\end{proof}


\subsection{The second derivative of marginals} Having presented the necessary background material, we can proceed to the proof of our main representation for the second derivative.

\begin{proposition} \label{prop:nguyen}
Fix $\diff\mu(x) = e^{-W(x)}\,\diff x$ a measure on $\R^{n}$, where $W:\R^n\to\R$ is $C^2$ smooth.  Let $\Omega \subseteq\R^{n+1}$ be a bounded open convex set with $C^3$ smooth boundary and for $t\in\R$, denote by $\Omega_t = \{x: \ (t,x)\in\Omega\}$ and $\Sigma_t = \partial\Omega_t$.  Assume that each $\Omega_t$ is strictly convex with nonempty interior for $t\in(-\eta,\eta)$.
Let $\Phi:\overline{\Omega}\to(0,\infty)$ be a function which is $C^{2}$ smooth and positive on $\overline{\Omega}$.  Moreover, for $\beta,\gamma\in\R\setminus\{0\}$,  consider the function $\varphi:(-\eta,\eta) \to\R_+$  given by
\begin{equation} \label{eq:phi}
\forall \ t\in (-\eta,\eta), \qquad \varphi(t) = \Bigg( \rmint_{\Omega_t} \Phi(t,x)^\beta\,\diff\mu(x)\Bigg)^{\gamma},
\end{equation}
Consider the probability measure $\nu$ on $\Omega_0$ given by 
\begin{equation}\label{eq:u}
\forall \ x\in \Omega_0,\qquad \frac{\diff\nu(x)}{\diff\mu(x)} = \frac{\Phi(0,x)^\beta}{\varphi(0)^{1/\gamma}}
\end{equation}
and let $u:\overline{\Omega_0}\to\R$ be a smooth function satisfying
\begin{equation} \label{eq:nguyen-eq}
\forall \ x\in \Omega_0,\qquad \ms{L}_{\nu} u(x) = \frac{\partial_t \Phi(0,x)}{\Phi(0,x)} - \rmint_{\Omega_0} \frac{\partial_t\Phi(0,\cdot)}{\Phi(0,\cdot)} \,\diff\nu - \frac{1}{\beta} \rmint_{\Sigma_0} \psi \,\diff\nu,
\end{equation}
and 
\begin{equation} \label{eq:nguyen-neumann}
\forall \ x\in\Sigma_0,\qquad \frac{\partial u(x)}{\partial \hat{\nu}_{\Sigma_0}} = -\frac{1}{\beta} \psi(x),
\end{equation}
where $\psi\equiv\psi_0:\Sigma_0\to\R$ is the variation appearing in \eqref{eq:Minkowski-var}. Then, we have
\begin{equation} \label{eq:nguyen}
\begin{split}
& \frac{1}{\gamma}\frac{\varphi''(0)}{\varphi(0)}  =  \beta \rmint_{\Omega_0} \frac{\langle\nabla^2_{(t,x)}\Phi(0,\cdot) X,X\rangle}{\Phi(0,\cdot)} \,\diff\nu - \beta^2 \rmint_{\Omega_0} \|\nabla^2 u\|_{\mr{HS}}^2 + \langle \nabla^2W \nabla u, \nabla u\rangle \, \diff\nu 
\\&  - \beta \rmint_{\Omega_0} (\ms{L}_\mu u)^2\,\diff\nu  - \beta(1-\beta\gamma) \Bigg(\rmint_{\Omega_0} \frac{\partial_t \Phi(0,\cdot)}{\Phi(0,\cdot)} \,\diff\nu\Bigg)^2-\Big(\frac{1}{\beta}-\gamma\Big) \Bigg(\rmint_{\Sigma_0} \psi \,\diff\nu\Bigg)^2
\\ & - 2 \beta \Bigg(\rmint_{\Omega_0} \ms{L}_\mu u\,\diff\nu\Bigg) \Bigg( \rmint_{\Omega_0} \frac{\partial_t \Phi(0,\cdot)}{\Phi(0,\cdot)}\,\diff\nu\Bigg) - 2 \Bigg(\rmint_{\Omega_0} \ms{L}_\mu u\,\diff\nu\Bigg) \Bigg(\rmint_{\Sigma_0} \psi \,\diff\nu\Bigg)
\\ & -2\beta\Big(\frac{1}{\beta}-\gamma\Big)  \Bigg( \rmint_{\Omega_0} \frac{\partial_t \Phi(0,\cdot)}{\Phi(0,\cdot)}\,\diff\nu\Bigg)\Bigg(\rmint_{\Sigma_0} \psi \,\diff\nu\Bigg) - \beta\rmint_{\Sigma_0} \frac{\partial_t\Phi(0,\cdot)}{\Phi(0,\cdot)} \cdot \psi\,\diff\nu + \rmint_{\Sigma_0} \frac{\partial}{\partial t}\bigg|_{t=0}\psi_t \,\diff\nu
\\ & - \rmint_{\Sigma_0} \beta^2 \langle \mr{I\!I}_{\Sigma_0}\nabla_{\Sigma_0} u,\nabla_{\Sigma_0} u\rangle +\langle \mr{I\!I}_{\Sigma_0}^{-1}\nabla_{\Sigma_0}\psi,\nabla_{\Sigma_0}\psi\rangle + 2\beta \langle \nabla_{\Sigma_0}\psi, \nabla_{\Sigma_0}u\rangle \,\diff\nu,
\end{split}
\end{equation}
where $X$ is the vector field $(1,-\beta\nabla u)$ on $\Omega_0$.
\end{proposition}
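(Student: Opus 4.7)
Set $I(t)\eqdef\rmint_{\Omega_t}\Phi(t,x)^\beta\,\diff\mu(x)$, so $\varphi=I^\gamma$ and
\[
\tfrac{1}{\gamma}\tfrac{\varphi''(0)}{\varphi(0)}=\tfrac{I''(0)}{I(0)}+(\gamma-1)\Big(\tfrac{I'(0)}{I(0)}\Big)^2.
\]
My plan is to compute the right-hand side by Reynolds-type differentiation and then to reorganise the result using the auxiliary function $u$ via the $L_2$ method. Writing $J(s,t)\eqdef\rmint_{\Omega_s}\Phi(t,x)^\beta\,\diff\mu(x)$ so that $I(t)=J(t,t)$, the $s$-derivatives are given by the variation formulas \eqref{eq:first-var}, \eqref{eq:second-var} applied with the $t$-frozen density $\Phi(t,\cdot)^\beta e^{-W}$, while the $t$-derivatives are computed pointwise via $\partial_t^2[\Phi^\beta]=\beta\Phi^\beta\bigl(\partial_t^2\Phi/\Phi+(\beta-1)(\partial_t\Phi/\Phi)^2\bigr)$. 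Assembling $\partial_s^2J+2\partial_s\partial_tJ+\partial_t^2J$ at $(0,0)$ and dividing by $I(0)$ already produces every term of \eqref{eq:nguyen} that does not involve $u$ or a spatial Hessian of $\Phi$: the piece $\beta\rmint(\partial_t^2\Phi/\Phi)\,\diff\nu$, the boundary contributions $2\beta\rmint_{\Sigma_0}(\partial_t\Phi/\Phi)\psi\,\diff\nu$, $\rmint_{\Sigma_0}\partial_t\psi_t\,\diff\nu$, and $\rmint_{\Sigma_0}\bigl(\msf{H}_{\nu,\Sigma_0}\psi^2-\langle\mr{I\!I}^{-1}\nabla_{\Sigma_0}\psi,\nabla_{\Sigma_0}\psi\rangle\bigr)\,\diff\nu$, together with the pointwise square $\beta(\beta-1)\rmint(\partial_t\Phi/\Phi)^2\,\diff\nu$ and the global square $(\gamma-1)\bigl(\beta\rmint(\partial_t\Phi/\Phi)\,\diff\nu+\rmint_{\Sigma_0}\psi\,\diff\nu\bigr)^2$.

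The next step is the $L_2$ substitution. Using \eqref{eq:nguyen-eq} I write $\partial_t\Phi/\Phi=\msf{L}_\nu u+c$ with $c$ constant, and apply the integration by parts identity \eqref{eq:bp} with Neumann datum $-\psi/\beta$ to evaluate $\rmint_{\Omega_0}(\partial_t\Phi/\Phi)^2\,\diff\nu = \rmint\msf{L}_\nu u\cdot(\partial_t\Phi/\Phi)\,\diff\nu+c\rmint(\partial_t\Phi/\Phi)\,\diff\nu$. This conversion yields (i) a residual $\rmint(\msf{L}_\nu u)^2\,\diff\nu$; (ii) a term $-\rmint\langle\nabla u,\nabla(\partial_t\Phi/\Phi)\rangle\,\diff\nu$ which, upon expanding $\nabla(\partial_t\Phi/\Phi)=\nabla_x\partial_t\Phi/\Phi-(\partial_t\Phi/\Phi)\nabla\log\Phi$ and integrating by parts once more against $\nabla u$, produces the mixed and spatial Hessian pieces $\langle\nabla_x\partial_t\Phi,\nabla u\rangle/\Phi$ and $\langle\nabla_x^2\Phi\,\nabla u,\nabla u\rangle/\Phi$; (iii) several boundary contributions involving $\psi$; and (iv) first-order global contributions proportional to $c$ and to $\rmint\msf{L}_\mu u\,\diff\nu$. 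Together with the bare $\beta\rmint\partial_t^2\Phi/\Phi\,\diff\nu$ from the first step, the mixed and spatial Hessian pieces assemble with coefficients $1$, $-2\beta$, $\beta^2$ into the quadratic form $\beta\rmint\langle\nabla^2_{(t,x)}\Phi\,X,X\rangle/\Phi\,\diff\nu$ with $X=(1,-\beta\nabla u)$, which is the first line of \eqref{eq:nguyen}.

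The residual $\rmint(\msf{L}_\nu u)^2\,\diff\nu$ produced above is dispatched by the weighted Reilly formula \eqref{eq:bochner} applied to $u$ with respect to $\nu$, whose potential is $W-\beta\log\Phi$ and whose Neumann datum is $-\psi/\beta$. The identities $\nabla^2(W-\beta\log\Phi)=\nabla^2W-\beta\nabla^2\log\Phi$ and $\nabla^2\log\Phi=\nabla^2\Phi/\Phi-(\nabla\log\Phi)^{\otimes2}$ on the interior side, together with $\msf{L}_\nu u=\msf{L}_\mu u+\beta\langle\nabla\log\Phi,\nabla u\rangle$ for unpacking $(\msf{L}_\nu u)^2$, allow me to absorb the ``$\log\Phi$'' contributions of both $\nabla^2(W-\beta\log\Phi)$ and $(\msf{L}_\nu u)^2$ into the Hessian-of-$\Phi$ block generated in the previous step, leaving the clean $-\beta^2\rmint(\|\nabla^2u\|_{\mr{HS}}^2+\langle\nabla^2W\nabla u,\nabla u\rangle)\,\diff\nu-\beta\rmint(\msf{L}_\mu u)^2\,\diff\nu$. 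The boundary part of Reilly --- namely $\msf{H}_{\nu,\Sigma_0}\psi^2/\beta^2$, $\mr{I\!I}(\nabla_{\Sigma_0}u,\nabla_{\Sigma_0}u)$ and the tangential cross term $-(2/\beta)\langle\nabla_{\Sigma_0}u,\nabla_{\Sigma_0}\psi\rangle$ --- combines with the $\msf{H}_{\nu,\Sigma_0}\psi^2$ piece already present from the first step (these enter with opposite signs, cancelling the weighted mean curvature contribution) and yields the final $\Sigma_0$-integral of \eqref{eq:nguyen}.

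The principal obstacle is pure bookkeeping. Every line of \eqref{eq:nguyen} is fed by several independent contributions (the two Reynolds formulas, each application of integration by parts, Reilly, and the $(\gamma-1)(I'/I)^2$ square), and the combinations $\beta(1-\beta\gamma)$, $(\tfrac{1}{\beta}-\gamma)$ and $2\beta(\tfrac{1}{\beta}-\gamma)$ multiplying the three ``square'' terms in \eqref{eq:nguyen} only emerge after the explicit value $c=\rmint(\partial_t\Phi/\Phi)\,\diff\nu+\tfrac{1}{\beta}\rmint_{\Sigma_0}\psi\,\diff\nu$ is substituted back everywhere it occurs. The specific normalising constants in \eqref{eq:nguyen-eq}--\eqref{eq:nguyen-neumann} are dictated by the Neumann compatibility condition, and are precisely what forces the cross terms involving $\rmint\msf{L}_\mu u\,\diff\nu$ to land in the positions prescribed by \eqref{eq:nguyen}.
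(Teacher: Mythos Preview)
Your outline is correct and follows essentially the same route as the paper: direct differentiation of $I(t)$ via the first/second variation formulas \eqref{eq:first-var}--\eqref{eq:second-var} together with pointwise differentiation of $\Phi^\beta$ (the paper's \eqref{eq:second-derivative}), followed by the $L_2$ rewriting using the auxiliary $u$ through integration by parts \eqref{eq:bp}, the Bochner--Reilly identity \eqref{eq:bochner}, and the relation $\ms{L}_\nu u=\ms{L}_\mu u+\beta\langle\nabla\log\Phi,\nabla u\rangle$. One small imprecision: in your item~(ii) the spatial Hessian piece $\langle\nabla_x^2\Phi\,\nabla u,\nabla u\rangle/\Phi$ does not arise from ``integrating by parts once more'' against $\nabla(\partial_t\Phi/\Phi)$---in the paper it comes entirely from the Reilly term $\langle\nabla^2(W-\beta\log\Phi)\nabla u,\nabla u\rangle$, while the cross term $(\partial_t\Phi/\Phi)\langle\nabla\log\Phi,\nabla u\rangle$ is instead handled by substituting the PDE once more (the paper's term~\raisebox{.5pt}{\textcircled{\raisebox{-.9pt} {I\!I}}}) so that the resulting $\ms{L}_\nu u\cdot\langle\nabla\log\Phi,\nabla u\rangle$ cancels against its twin produced when unpacking $(\ms{L}_\nu u-\ms{L}_\mu u)^2$ (the paper's term~\raisebox{.5pt}{\textcircled{\raisebox{-.9pt} {I}}})---but this does not affect the validity of your scheme.
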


\begin{proof}
We shall make a slight abuse of notation by writing $\Phi_0\equiv\Phi(0,\cdot)$, $\partial_t\Phi_0 \equiv \partial_t \Phi(0,\cdot)$ and $\psi\equiv\psi_0$.  Observe that as the boundary $\Sigma$ of $\Omega$ is $C^3$ smooth, the same holds for each $\Sigma_t$.  Additionally, we will prove that the function $g:(-\eta,\eta)\times\Sigma_0\to\R$ defined by \eqref{eq:defn-g} is jointly  $C^2$ smooth.  To see this, observe that \eqref{eq:defn-g} and \eqref{eq:defn-F} readily imply that
\begin{equation}
\forall  \ (t,y)\in(-\eta,\eta)\times \Sigma_0, \qquad g(t,y) = \langle F(t,y) , \hat{\nu}_{\Sigma_0}(y) \rangle,
\end{equation}
so it suffices to prove that $(t,y)\mapsto (t,F_t(y))$ is $C^2$.  By the inverse function theorem, we equivalently need to prove that $\Sigma\cap\{|t|<\eta\} \ni (t,x) \mapsto (t,F_t^{-1}(x))$ is $C^2$ smooth.  
This is indeed true since
\begin{equation}
\forall \ (t,x)\in \Sigma\cap\{|t|<\eta\},\qquad F_t^{-1}(x) = \hat{\nu}_{\Sigma_0}^{-1}\circ \hat{\nu}_{\Sigma_t}(x)
\end{equation}
and
\begin{equation} \label{eq:norm-sec}
\forall \ x\in\Sigma_t, \qquad \hat{\nu}_{\Sigma_t}(x) = \frac{\hat{\nu}_\Sigma(t,x) - \langle \hat{\nu}_\Sigma(t,x), e_{1} \rangle e_1}{|\hat{\nu}_\Sigma(t,x) - \langle \hat{\nu}_\Sigma(t,x), e_{1} \rangle e_1|},
\end{equation}
and $\hat{\nu}_\Sigma$ is jointly $C^2$ smooth.  To justify the last identity, observe that the condition $\Omega_t\neq\emptyset$ for all $t\in(-\eta,\eta)$ implies that the denominator in \eqref{eq:norm-sec} never vanishes. 

Therefore, using Lemma \ref{lem:Minkowski-var} and the product rule, we differentiate under the integral sign to get
\begin{equation}
\frac{1}{\gamma}\varphi'(t) \stackrel{\eqref{eq:first-var}}{=}  \Bigg( \rmint_{\Omega_t} \Phi(t,x)^\beta\,\diff\mu(x)\Bigg)^{\gamma-1}\!\!\! \cdot \Bigg\{ \beta \rmint_{\Omega_t} \Phi(t,\cdot)^{\beta-1}\cdot \partial_t\Phi(t,\cdot)\,\diff\mu + \rmint_{\Sigma_t} \Phi(t,\cdot)^\beta\cdot \psi_t\circ F_t^{-1} \,\diff\mu\Bigg\}
\end{equation}
and similarly, since $F_0$ is the identity,
\begin{equation}
\begin{split}
\frac{1}{\gamma}& \varphi''(0) = (\gamma-1)\Bigg( \rmint_{\Omega_0} \Phi_0^\beta\,\diff\mu\Bigg)^{\gamma-2} \cdot \Bigg\{ \beta \rmint_{\Omega_0} \Phi_0^{\beta-1}\cdot \partial_t\Phi_0\,\diff\mu + \rmint_{\Sigma_0} \Phi_0^\beta\cdot \psi \,\diff\mu\Bigg\}^2
\\ & + \Bigg( \rmint_{\Omega_0} \Phi_0^\beta\,\diff\mu\Bigg)^{\gamma-1} \cdot \frac{\diff }{\diff t} \bigg|_{t=0} \Bigg\{ \beta \rmint_{\Omega_t} \Phi(t,\cdot)^{\beta-1} \cdot \partial_t\Phi(t,\cdot)\,\diff\mu + \rmint_{\Sigma_t} \Phi(t,\cdot)^\beta\cdot \psi_t\circ F_t^{-1} \,\diff\mu\Bigg\}.
\end{split}
\end{equation}
To evaluate the latter derivative, observe that \eqref{eq:first-var} again yields
\begin{equation}
\begin{split}
 \frac{\diff }{\diff t} \bigg|_{t=0}  \beta \rmint_{\Omega_t} \Phi(t,\cdot)^{\beta-1}\cdot \partial_t\Phi(t,\cdot)\,\diff\mu \stackrel{\eqref{eq:first-var}}{=}  \beta  \rmint_{\Omega_0}(\beta-1)\cdot \Phi_0^{\beta-2} \cdot (\partial_t&\Phi_0) ^2 + \Phi_0^{\beta-1}\cdot \partial_{tt}\Phi_0 \,\diff\mu 
 \\ & + \beta \rmint_{\Sigma_0} \Phi_0^{\beta-1}\cdot \partial_t \Phi_0\cdot \psi \,\diff\mu,
 \end{split}
\end{equation}
whereas \eqref{eq:second-var} combined with a change of variables gives
\begin{equation}
\begin{split}
\frac{\diff }{\diff t} \bigg|_{t=0}  \rmint_{\Sigma_t} \Phi(t,\cdot)^\beta  \cdot \psi_t\circ F_t^{-1} &\,\diff\mu  = \frac{\diff }{\diff t} \bigg|_{t=0} \rmint_{\Sigma_0} \psi_t(y) \cdot e^{-W(F(t,y))}\cdot \Phi(t,F(t,y))^\beta \cdot \mathrm{Jac}F_t(y) \, \diff y
\\ & \stackrel{\eqref{eq:second-var}}{=} \rmint_{\Sigma_0} \frac{\partial}{\partial t}\bigg|_{t=0}\psi_t \,\diff\widetilde{\nu} + \rmint_{\Sigma_0} \msf{H}_{\Sigma_0,\widetilde{\nu}}\cdot  \psi^2 - \langle \mr{I\!I}_{\Sigma_0}^{-1}\nabla_{\Sigma_0}\psi,\nabla_{\Sigma_0}\psi\rangle \,\diff\widetilde{\nu},
\end{split}
\end{equation}
where we use the ad hoc notation $\diff\widetilde{\nu} \eqdef \Phi_0^\beta\, \diff\mu$. Considering the normalized measure $\nu\equiv\frac{\widetilde{\nu}}{\widetilde{\nu}(\Omega_0)}$ and combining all the above, we derive the formula
\begin{equation} \label{eq:second-derivative}
\begin{split}
\frac{1}{\gamma} \frac{\varphi''(0)}{\varphi(0)}
& =\beta \rmint_{\Omega_0} \frac{\partial_{tt}\Phi_0}{\Phi_0}\,\diff\nu+  \beta(\beta-1)\rmint_{\Omega_0} \left(\frac{\partial_t\Phi_0}{\Phi_0}\right)^2\,\diff\nu +\beta^2(\gamma-1) \Bigg(\rmint_{\Omega_0} \frac{\partial_t \Phi_0}{\Phi_0} \,\diff\nu\Bigg)^2
\\ & +2\beta(\gamma-1) \Bigg(\rmint_{\Omega_0} \frac{\partial_t \Phi_0}{\Phi_0} \,\diff\nu \Bigg)\Bigg( \rmint_{\Sigma_0} \psi \,\diff\nu \Bigg) + (\gamma-1) \Bigg(\rmint_{\Sigma_0} \psi\,\diff\nu\Bigg)^2
\\ & + \beta\rmint_{\Sigma_0} \frac{\partial_t \Phi_0}{\Phi_0} \cdot \psi\,\diff\nu + \rmint_{\Sigma_0} \frac{\partial}{\partial t}\bigg|_{t=0}\psi_t \,\diff\nu+ \rmint_{\Sigma_0} \msf{H}_{\Sigma_0,\nu}\cdot \psi^2 - \langle \mr{I\!I}_{\Sigma_0}^{-1}\nabla_{\Sigma_0}\psi,\nabla_{\Sigma_0}\psi\rangle \,\diff \nu
\\ & = \beta \rmint_{\Omega_0} \frac{\partial_{tt}\Phi_0}{\Phi_0}\,\diff\nu + \beta(\beta-1)\mathrm{Var}_{\nu} \Bigg( \frac{\partial_t\Phi_0}{\Phi_0} \Bigg) +\beta(\beta\gamma-1) \Bigg(\rmint_{\Omega_0} \frac{\partial_t \Phi_0}{\Phi_0} \,\diff\nu\Bigg)^2
\\ & +2\beta(\gamma-1) \Bigg(\rmint_{\Omega_0} \frac{\partial_t \Phi_0}{\Phi_0} \,\diff\nu \Bigg)\Bigg( \rmint_{\Sigma_0} \psi \,\diff\nu \Bigg) + (\gamma-1) \Bigg(\rmint_{\Sigma_0} \psi\,\diff\nu\Bigg)^2
\\ & + \beta\rmint_{\Sigma_0} \frac{\partial_t \Phi_0}{\Phi_0} \cdot \psi\,\diff\nu  + \rmint_{\Sigma_0} \frac{\partial}{\partial t}\bigg|_{t=0}\psi_t \,\diff\nu+ \rmint_{\Sigma_0} \msf{H}_{\Sigma_0,\nu}\cdot \psi^2 - \langle \mr{I\!I}_{\Sigma_0}^{-1}\nabla_{\Sigma_0}\psi,\nabla_{\Sigma_0}\psi\rangle \,\diff \nu.
\end{split}
\end{equation}
Consider now the function $u:\overline{\Omega_0}\to\R$ solving equation \eqref{eq:nguyen-eq} with Neumann boundary data \eqref{eq:nguyen-neumann}. By the integration by parts formula \eqref{eq:bp}, we can rewrite the variance as
\begin{equation}
\begin{split}
&\mathrm{Var}_{\nu} \Bigg( \frac{\partial_t\Phi_0}{\Phi_0} \Bigg) = \rmint_{\Omega_0} \Bigg( \frac{\partial_t\Phi_0}{\Phi_0} - \rmint_{\Omega_0} \frac{\partial_t\Phi_0}{\Phi_0}\,\diff\nu  \Bigg) \ms{L}_\nu u \,\diff \nu
\\ & \stackrel{\eqref{eq:bp}\wedge\eqref{eq:nguyen-neumann}}{=} - \rmint_{\Omega_0} \left\langle \nabla_x \Bigg(\frac{\partial_t\Phi_0}{\Phi_0} \Bigg), \nabla u \right\rangle \,\diff\nu -\frac{1}{\beta} \rmint_{\Sigma_0} \Bigg( \frac{\partial_t\Phi_0}{\Phi_0} - \rmint_{\Omega_0} \frac{\partial_t\Phi_0}{\Phi_0}\,\diff\nu  \Bigg) \psi\,\diff\nu
\\ & = - \rmint_{\Omega_0} \left\langle \nabla_x \Bigg(\frac{\partial_t\Phi_0}{\Phi_0} \Bigg), \nabla u \right\rangle \,\diff\nu -\frac{1}{\beta} \rmint_{\Sigma_0}\frac{\partial_t\Phi_0}{\Phi_0} \cdot\psi\,\diff\nu + \frac{1}{\beta} \Bigg( \rmint_{\Omega_0} \frac{\partial_t\Phi_0}{\Phi_0}\,\diff\nu \Bigg) \Bigg(\rmint_{\Sigma_0}\psi\,\diff\nu\Bigg).
\end{split}
\end{equation}
Similarly, using again the equation \eqref{eq:nguyen-eq} and the Bochner--Reilly formula \eqref{eq:bochner}, we get
\begin{equation}
\begin{split}
\mathrm{Var}_{\nu} &\Bigg( \frac{\partial_t\Phi_0}{\Phi_0} \Bigg) \stackrel{\eqref{eq:nguyen-eq}}{=} \mathrm{Var}_{\nu} \big( \ms{L}_\nu u \big)  = \rmint_{\Omega_0} \big(\ms{L}_\nu u\big)^2\,\diff\nu - \Bigg( \rmint_{\Omega_0} \ms{L}_\nu u \,\diff\nu \Bigg)^2  
\\ & \stackrel{\eqref{eq:nguyen-eq}}{=}\rmint_{\Omega_0} \big(\ms{L}_\nu u\big)^2\,\diff\nu - \frac{1}{\beta^2} \Bigg(\rmint_{\Sigma_0} \psi\,\diff\nu\Bigg)^2
\\ & \stackrel{\eqref{eq:nguyen-neumann}\wedge\eqref{eq:bochner}}{=} \rmint_{\Omega_0} \|\nabla^2u\|_{\mr{HS}}^2 +  \langle \nabla^2W\nabla u, \nabla u \rangle\!-\! \beta\left\langle\frac{ \nabla^2_x\Phi_0}{\Phi_0} \nabla u,\nabla u\right\rangle + \beta \frac{\langle \nabla_x\Phi_0,\nabla u\rangle^2}{\Phi_0^2}\,\diff\nu
\\ & +\rmint_{\Sigma_0} \frac{1}{\beta^2} \msf{H}_{\Sigma_0,\nu}\cdot \psi^2 + \langle \mr{I\!I}_{\Sigma_0}\nabla_{\Sigma_0}u,\nabla_{\Sigma_0}u\rangle +\frac{2}{\beta}\langle \nabla_{\Sigma_0}\psi,\nabla_{\Sigma_0}u\rangle \,\diff\nu- \frac{1}{\beta^2} \Bigg(\rmint_{\Sigma_0} \psi\,\diff\nu\Bigg)^2.
\end{split}
\end{equation}
Therefore, combining the above, we get 
\begin{equation} \label{eq:need-to-replace}
\begin{split}
&\mathrm{Var}_{\nu} \Bigg( \frac{\partial_t\Phi_0}{\Phi_0} \Bigg) = - \rmint_{\Omega_0} \|\nabla^2u\|_{\mr{HS}}^2 +  \langle \nabla^2W\nabla u, \nabla u \rangle\!-\! \beta\left\langle\frac{ \nabla^2_x\Phi_0}{\Phi_0} \nabla u,\nabla u\right\rangle + \beta \underbrace{\frac{\langle \nabla_x\Phi_0,\nabla u\rangle^2}{\Phi_0^2}}_{\raisebox{.5pt}{\textcircled{\raisebox{-.9pt} {I}}} }\,\diff\nu
\\ & -2 \rmint_{\Omega_0} \left\langle \nabla_x \Bigg(\frac{\partial_t\Phi_0}{\Phi_0} \Bigg), \nabla u \right\rangle \,\diff\nu -\frac{2}{\beta} \rmint_{\Sigma_0}\frac{\partial_t\Phi_0}{\Phi_0} \cdot\psi\,\diff\nu + \frac{2}{\beta} \Bigg( \rmint_{\Omega_0} \frac{\partial_t\Phi_0}{\Phi_0}\,\diff\nu \Bigg) \Bigg(\rmint_{\Sigma_0}\psi\,\diff\nu\Bigg)
\\ & - \rmint_{\Sigma_0} \frac{1}{\beta^2} \msf{H}_{\Sigma_0,\nu}\cdot\psi^2 + \langle \mr{I\!I}_{\Sigma_0}\nabla_{\Sigma_0}u,\nabla_{\Sigma_0}u\rangle +\frac{2}{\beta}\langle \nabla_{\Sigma_0}\psi,\nabla_{\Sigma_0}u\rangle \,\diff\nu+ \frac{1}{\beta^2} \Bigg(\rmint_{\Sigma_0} \psi\,\diff\nu\Bigg)^2.
\end{split}
\end{equation}
We shall now use equations \eqref{eq:nguyen-eq} and \eqref{eq:nguyen-neumann} to rewrite the integral of the term $\raisebox{.5pt}{\textcircled{\raisebox{-.9pt} {I}}}$. Indeed, notice that by the definitions of $\ms{L}_\mu$, $\ms{L}_{\nu}$ and  \eqref{eq:nguyen-eq}, we have
\begin{equation} \label{eq:I}
\begin{split}
\beta^2\rmint_{\Omega_0}\raisebox{.5pt}{\textcircled{\raisebox{-.9pt} {I}}}&\,\diff\nu  = \beta^2 \rmint_{\Omega_0} \frac{\langle \nabla_x\Phi_0,\nabla u\rangle^2}{\Phi_0^2}\,\diff\nu = \rmint_{\Omega_0} \big(\ms{L}_{\nu} u- \ms{L}_\mu u\big)^2\,\diff\nu 
\\ &= \rmint_{\Omega_0} \big(\ms{L}_\mu u\big)^2 +2 \ms{L}_\nu u\cdot (\ms{L}_\nu u -\ms{L}_\mu u )\,\diff\nu - \rmint_{\Omega_0} \big( \ms{L}_\nu u\big)^2\,\diff\nu
\\ & \stackrel{\eqref{eq:nguyen-eq}}{=} \rmint_{\Omega_0} \big(\ms{L}_\mu u\big)^2 + 2\beta\ms{L}_{\nu} u \cdot  \left\langle \frac{\nabla_x\Phi_0}{\Phi_0}, \nabla u\right\rangle \,\diff\nu - \mathrm{Var}_{\nu} \Bigg( \frac{\partial_t\Phi_0}{\Phi_0} \Bigg) - \frac{1}{\beta^2} \Bigg(\rmint_{\Sigma_0} \psi\,\diff\nu\Bigg)^2.
\end{split}
\end{equation}
Therefore, \eqref{eq:need-to-replace} can be rewritten as
\begin{equation*}
\begin{split}
(\beta-1)\mathrm{Var}_{\nu} \Bigg( \frac{\partial_t\Phi_0}{\Phi_0} \Bigg) & = - \beta\rmint_{\Omega_0}\!\! \|\nabla^2u\|_{\mr{HS}}^2 +  \langle \nabla^2W\nabla u, \nabla u \rangle\!-\! \beta\left\langle\frac{ \nabla^2_x\Phi_0}{\Phi_0} \nabla u,\nabla u\right\rangle\,\diff\nu - \!\rmint_{\Omega_0}\!\!\!\big( \ms{L}_\mu u\big)^2\,\diff\nu
\\ & - 2\beta \rmint_{\Omega_0}\underbrace{\left\langle \nabla_x \Bigg(\frac{\partial_t\Phi_0}{\Phi_0} \Bigg), \nabla u \right\rangle}_{\raisebox{.5pt}{\textcircled{\raisebox{-.9pt} {I\!I}}}} + \ms{L}_{\nu} u \cdot  \left\langle \frac{\nabla_x\Phi_0}{\Phi_0}, \nabla u\right\rangle \,\diff\nu  -2 \rmint_{\Sigma_0}\frac{\partial_t\Phi_0}{\Phi_0} \cdot\psi\,\diff\nu
\\ & + 2 \Bigg( \rmint_{\Omega_0} \frac{\partial_t\Phi_0}{\Phi_0}\,\diff\nu \Bigg) \Bigg(\rmint_{\Sigma_0}\psi\,\diff\nu\Bigg) + \frac{\beta+1}{\beta^2} \Bigg(\rmint_{\Sigma_0} \psi\,\diff\nu\Bigg)^2
\\ & - \rmint_{\Sigma_0} \frac{1}{\beta} \msf{H}_{\Sigma_0,\nu}\cdot \psi^2 + \beta \langle \mr{I\!I}_{\Sigma_0}\nabla_{\Sigma_0}u,\nabla_{\Sigma_0}u\rangle +2\langle \nabla_{\Sigma_0}\psi,\nabla_{\Sigma_0}u\rangle \,\diff\nu.
\end{split}
\end{equation*}
For term $\raisebox{.5pt}{\textcircled{\raisebox{-.9pt} {I\!I}}}$,  employing again \eqref{eq:nguyen-eq}, we can write
    \begin{alignat*}{2}
\beta\rmint_{\Omega_0} \raisebox{.5pt}{\textcircled{\raisebox{-.9pt} {I\!I}}}\,\diff\nu & =\beta \rmint_{\Omega_0} \left\langle \frac{ \nabla_x \partial_t \Phi_0 }{\Phi_0},\nabla u \right\rangle  && - \frac{\partial_t\Phi_0}{\Phi_0} \left\langle \frac{\nabla_x \Phi_0}{\Phi_0}, \nabla u\right\rangle \,\diff\nu
\\ & \stackrel{\eqref{eq:nguyen-eq}}{=} \beta \rmint_{\Omega_0}\left\langle \frac{ \nabla_x \partial_t \Phi_0 }{\Phi_0},\nabla u \right\rangle  && -\ms{L}_{\nu} u\cdot \left\langle \frac{\nabla_x \Phi_0}{\Phi_0}, \nabla u\right\rangle \,\diff\nu 
\\ & &&- \beta \Bigg( \rmint_{\Omega_0}\frac{\partial_t\Phi_0}{\Phi_0}\,\diff\nu+\frac{1}{\beta} \rmint_{\Sigma_0} \psi\,\diff\nu \Bigg) \Bigg( \rmint_{\Omega_0}\left\langle \frac{\nabla_x \Phi_0}{\Phi_0}, \nabla u\right\rangle\,\diff\nu \Bigg)
\\ & = \beta \rmint_{\Omega_0}\left\langle \frac{ \nabla_x \partial_t \Phi_0 }{\Phi_0},\nabla u \right\rangle  && -\ms{L}_{\nu} u\cdot \left\langle \frac{\nabla_x \Phi_0}{\Phi_0}, \nabla u\right\rangle \,\diff\nu 
\\ & &&+ \Bigg( \rmint_{\Omega_0}\frac{\partial_t\Phi_0}{\Phi_0}\,\diff\nu+\frac{1}{\beta} \rmint_{\Sigma_0} \psi\,\diff\nu \Bigg) \Bigg( \rmint_{\Omega_0} \ms{L}_\mu u \,\diff\nu + \frac{1}{\beta} \rmint_{\Sigma_0} \psi\,\diff\nu \Bigg),
\end{alignat*}
where in the last equality we used again that $\ms{L}_{\nu}u-\ms{L}_\mu u = \beta \left\langle \frac{\nabla_x \Phi_0}{\Phi_0},\nabla u\right\rangle$ and integrated with respect to $\nu$. Putting everything together, we finally derive the variance representation
\begin{equation*}
\begin{split}
(\beta-1)\mathrm{Var}_{\nu}& \Bigg( \frac{\partial_t\Phi_0}{\Phi_0} \Bigg)  = - \beta\rmint_{\Omega_0}\!\! \|\nabla^2u\|_{\mr{HS}}^2 +  \langle \nabla^2W\nabla u, \nabla u \rangle\!-\! \beta\left\langle\frac{ \nabla^2_x\Phi_0}{\Phi_0} \nabla u,\nabla u\right\rangle+2\left\langle \frac{ \nabla_x \partial_t \Phi_0 }{\Phi_0},\nabla u \right\rangle\,\diff\nu 
\\ &- \rmint_{\Omega_0}\big( \ms{L}_\mu u\big)^2\,\diff\nu - 2  \Bigg( \rmint_{\Omega_0}\frac{\partial_t\Phi_0}{\Phi_0}\,\diff\nu+\frac{1}{\beta} \rmint_{\Sigma_0} \psi\,\diff\nu \Bigg) \Bigg( \rmint_{\Omega_0} \ms{L}_\mu u \,\diff\nu + \frac{1}{\beta} \rmint_{\Sigma_0} \psi\,\diff\nu \Bigg)
\\ & - 2 \rmint_{\Sigma_0}\frac{\partial_t\Phi_0}{\Phi_0} \cdot\psi\,\diff\nu + 2 \Bigg( \rmint_{\Omega_0} \frac{\partial_t\Phi_0}{\Phi_0}\,\diff\nu \Bigg) \Bigg(\rmint_{\Sigma_0}\psi\,\diff\nu\Bigg) + \frac{\beta+1}{\beta^2} \Bigg(\rmint_{\Sigma_0} \psi\,\diff\nu\Bigg)^2
\\ & - \rmint_{\Sigma_0} \frac{1}{\beta} \msf{H}_{\Sigma_0,\nu}\cdot \psi^2 + \beta \langle \mr{I\!I}_{\Sigma_0}\nabla_{\Sigma_0}u,\nabla_{\Sigma_0}u\rangle +2\langle \nabla_{\Sigma_0}\psi,\nabla_{\Sigma_0}u\rangle \,\diff\nu.
\end{split}
\end{equation*}
Finally, substituting this formula for the variance into \eqref{eq:second-derivative} and simplifying, we conclude that 
\begin{equation}
\begin{split}
 \frac{1}{\gamma} & \frac{\varphi''(0)}{\varphi(0)}
 = \beta \rmint_{\Omega_0} \frac{\partial_{tt}\Phi_0}{\Phi_0} - 2\beta\left\langle \frac{\nabla_x\partial_t\Phi_0}{\Phi_0}, \nabla u \right\rangle + \beta^2\left\langle\frac{ \nabla^2_x\Phi_0}{\Phi_0} \nabla u,\nabla u\right\rangle \,\diff\nu
\\ & -\beta^2\rmint_{\Omega_0} \|\nabla^2u\|_{\mr{HS}}^2 +  \langle \nabla^2W\nabla u, \nabla u \rangle\,\diff\nu 
\\ & - \beta \rmint_{\Omega_0} \big(\ms{L}_\mu u\big)^2\,\diff\nu - \beta(1-\beta\gamma) \Bigg(\rmint_{\Omega_0} \frac{\partial_t \Phi_0}{\Phi_0} \,\diff\nu\Bigg)^2-\Big(\frac{1}{\beta}-\gamma\Big) \Bigg(\rmint_{\Sigma_0} \psi \,\diff\nu\Bigg)^2
\\ & - 2 \beta \Bigg(\rmint_{\Omega_0} \ms{L}_\mu u\,\diff\nu\Bigg) \Bigg( \rmint_{\Omega_0} \frac{\partial_t \Phi_0}{\Phi_0}\,\diff\nu\Bigg) - 2 \Bigg(\rmint_{\Omega_0} \ms{L}_\mu u\,\diff\nu\Bigg) \Bigg(\rmint_{\Sigma_0} \psi \,\diff\nu\Bigg)
\\ & -2\beta\Big(\frac{1}{\beta}-\gamma\Big)  \Bigg( \rmint_{\Omega_0} \frac{\partial_t \Phi_0}{\Phi_0}\,\diff\nu\Bigg)\Bigg(\rmint_{\Sigma_0} \psi \,\diff\nu\Bigg) - \beta\rmint_{\Sigma_0} \frac{\partial_t\Phi_0}{\Phi_0} \cdot \psi\,\diff\nu + \rmint_{\Sigma_0} \frac{\partial}{\partial t}\bigg|_{t=0}\psi_t \,\diff\nu
\\ & - \rmint_{\Sigma_0} \beta^2 \langle \mr{I\!I}_{\Sigma_0}\nabla_{\Sigma_0} u,\nabla_{\Sigma_0} u\rangle +\langle \mr{I\!I}_{\Sigma_0}^{-1}\nabla_{\Sigma_0}\psi,\nabla_{\Sigma_0}\psi\rangle + 2\beta \langle \nabla_{\Sigma_0}\psi, \nabla_{\Sigma_0}u\rangle \,\diff\nu.
\end{split}
\end{equation}
The proof of Proposition \ref{prop:nguyen} is now complete if one observes that
\begin{equation*}
  \langle\nabla^2_{(t,x)}\Phi(0,x) X(x),X(x)\rangle=\partial_{tt}\Phi_0(x) - 2\beta\left\langle \nabla_x\partial_t\Phi_0(x), \nabla u(x) \right\rangle + \beta^2\left\langle\nabla^2_x\Phi_0(x)\nabla u(x),\nabla u(x)\right\rangle
\end{equation*}
for the vector field $X(x)=(1,-\beta\nabla u(x))$ on $\Omega_0$.
\end{proof}


\section{A curvature condition and the proof of Theorem \ref{thm:bbl}} \label{sec:main}

\subsection{Hereditary convexity} We start by emphasizing the key \emph{curvature} property which is sufficient for our results. Variants of this have already appeared several times in the literature, e.g.~in  \cite{EM21,Liv24,CR23,AR23}. As we shall see, radially symmetric log-concave measures will satisfy this property. 

\begin{definition}\label{def:curvature}
Let $\mu$ be an even measure  on $\R^n$ with density $e^{-W}:\R^n\to\R_+$, where $W$ is a $C^2$ smooth function. We say that $\mu$ is \emph{hereditarily convex} (with respect to even measures) if 
\begin{itemize}
    \item[(i)] $\forall \ z\in \R^n$, we have $\langle \nabla W (z) , z \rangle \ge 0$;
    \item[(ii)] For every \emph{even} probability measure $\nu$ which is \emph{log-concave with
respect to $\mu$},  and for every \emph{even} smooth function $u$ with $\ms{L}_\mu(u)\in L_2(\nu)$,  we have the inequality
\begin{equation}\label{eq:x2}
\rmint_{\R^n} \|\nabla^2u\|_{\mr{HS}}^2 + \langle \nabla^2 W \nabla u, \nabla u\rangle \,\diff\nu \geq 
\frac{\Big( \rmint_{\R^n} \ms{L}_\mu u\,\diff\nu\Big)^2 }
{\rmint_{\R^n} \ms{L}_\mu (|x|^2/2) \,\diff\nu}.
\end{equation}
In this statement, we allow $\nu$ to have a smooth support $U\subsetneq \R^n$, in which case we ask that~\eqref{eq:x2} is satisfied for smooth $u:\overline{U}\to\R$ with arbitrary Neumann data on $\partial U$.
\end{itemize}

\end{definition}

Note that the first condition is automatically satisfied when $\mu$ is log-concave. 
We will see below that  the denominator in the right hand-side of~\eqref{eq:x2} is positive when $\nu$ is not proportional to $\mu$.  In the degenerate scenario in which the two measures are proportional,   the ratio is simply interpreted as 0.  In any case, one can replace the right-hand side of~\eqref{eq:x2} by the supremum over $\lambda\in \R$ of
$$2\lambda \rmint_{\R^n} \ms{L}_\mu u \, \diff\nu - \lambda^2 \rmint_{\R^n} \ms{L}_\mu (|x|^2/2) \, \diff\nu.$$
Actually, as we shall see, it would also make sense to study the same property for fixed $\lambda$ in place of the supremum. It is interesting to note that the Lebesgue measure is hereditarily convex without the restriction of $\nu$ and $u$ being even: in that case $W=0$ and $\ms{L}_\mu u= \Delta u$, so~\eqref{eq:x2} holds for any finite measure $\nu$ and every smooth function $u$, in view of the pointwise inequality $\|\nabla^2 u \|^2_{\mr{HS}} \ge \frac1n (\Delta u)^2$. 

It is important to make some further observations on the right hand-side of~\eqref{eq:x2}. Write
$$\diff\nu = {\bf 1}_U \,  e^{-\Psi}\, \diff\mu$$
where $U$ is a smooth (open) convex set and assume that $\Psi$ is $C^1$ on $\overline U$. 
The denominator of \eqref{eq:x2} then satisfies the inequalities
\begin{equation}
\begin{split}
n & \geq n-\rmint_U \langle \nabla W(x),x\rangle \,\diff\nu(x)=  \rmint_U \ms{L}_\mu (|x|^2/2) \, \diff\nu
\\ & = \rmint_U \langle \nabla \Psi(x), x \rangle \,\diff\nu(x) + \rmint_{\partial U} \langle \nu_{\partial K}(x) , x \rangle \, \diff \nu(x)   \ge   \rmint_U \langle \nabla \Psi(x), x \rangle \,\diff\nu(x) \ge  0
\end{split}
\end{equation}
where we used property $(i)$, and the fact that convexity and symmetry imply that 
$ \langle \nabla \Psi(x), x \rangle\ge 0$ and $\langle \nu_K(x) , x \rangle \ge 0$. 
In particular,  we get that $\rmint_{U} \ms{L}_\mu(|x|^2/2) \,\diff\nu >0$ if $\nu$ is not proportional to $\mu$.  So we see that our condition implicitly incorporates the  dimension, in the form
\begin{equation}\label{eq:x2n}
\rmint_{\R^n} \|\nabla^2u\|_{\mr{HS}}^2 + \langle \nabla^2 W \nabla u, \nabla u\rangle \,\diff\nu \geq 
\frac1n \Bigg( \rmint \ms{L}_\mu u\,\diff\nu\Bigg)^2 .
\end{equation}

As we shall see, heriditary convexity is a sufficient condition for the open questions we raised in the introduction, in the sense that given an even measure $\mu$ with smooth density on $\R^n$,
\begin{eqnarray}\label{eq:prop_impl}
 & &    \textrm{$\mu$ verifies the dimensional BM-conjecture~\eqref{eq:dim-BM-conj}} \notag \\
 & \Nearrow &  \notag\\
 \textrm{$\mu$ is even hereditarily convex} & &  \\
 & \Searrow  & \notag \\
 & &  \textrm{$\mu$ verifies the B-conjecture~\eqref{eq:B-conj}}. \notag
\end{eqnarray}
For the questions under study, smoothness can be enforced by approximation, although it could be of independent interest to allow $\mu$ to have a support in the definition above. It should be said that there is some hidden differences between the implications~\eqref{eq:prop_impl}. The fact that~\eqref{eq:x2n}, a consequence of~\eqref{eq:x2}, restricted to measures $\nu$ with bounded support implies the Brunn--Minkowski inequality \eqref{eq:dim-BM-conj} is due to \cite{KM18,KL21}, yet soon we will prove that it implies the stronger functional form of Theorem \ref{thm:bbl}. On the contrary, to derive the B-conjecture from \eqref{eq:x2}, we will only consider measures $\nu$ supported on $\R^n$ having a smooth positive log-concave density with respect to $\mu$.

We now establish that rotationally invariant log-concave measures $\mu$ satisfy the hereditary convexity condition. This amounts to an optimization of an argument originating in \cite{EM21}, while incorporating a novel weighted Poincaré inequality from~\cite{CR23}, which remains at the heart of all recent developments exhibiting refined concavity properties of rotationally invariant measures. 

\begin{proposition} \label{prop:x2}
Let $w:[0,\infty)\to(-\infty,\infty)$ be a $C^2$ smooth increasing function such that $t\mapsto w(e^t)$ is convex on $\R$ and let $\mu$ be the measure on $\R^n$ with density $e^{-w(|x|)}$. 
Then, $\mu$ is heriditarily convex. 
\end{proposition}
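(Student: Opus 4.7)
Condition~(i) follows directly from the monotonicity of $w$: since $\nabla W(x) = w'(|x|)\, x/|x|$, we have $\langle \nabla W(x), x\rangle = w'(|x|)\,|x|\ge 0$.

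The substance of the statement is condition~(ii), which I plan to establish by combining a weighted Reilly--Bochner-type identity for $\nu$ with the refined spectral inequality for rotationally invariant log-concave measures of Cordero-Erausquin--Rotem~\cite{CR23}. By a standard approximation, I first reduce to the case where $\nu$ admits a strictly log-concave $C^2$ density $e^{-\Psi}$ with respect to $\mu$ on a smooth bounded open convex set $U$, with $\Psi$ even. Next, by optimizing over $\lambda \in \R$, inequality~\eqref{eq:x2} is equivalent to the linearized family
\begin{equation*}
\rmint_U \|\nabla^2 u\|^2_{\mr{HS}} + \langle \nabla^2 W \nabla u, \nabla u\rangle \,\diff\nu \,\geq\, 2\lambda \rmint_U \ms{L}_\mu u \, \diff\nu - \lambda^2 \rmint_U \ms{L}_\mu(h) \, \diff\nu, \qquad \lambda \in \R,
\end{equation*}
where $h(x) = |x|^2/2$. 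Applying the identity $\ms{L}_\mu u = \ms{L}_\nu u + \langle \nabla \Psi, \nabla u\rangle$ and integrating by parts on $(U,\nu)$ with the aid of the Reilly--Bochner formula~\eqref{eq:bochner}, one recasts this linearized statement as a weighted Bochner-type inequality of the form studied in~\cite{CR23}. The evenness of $u$ and $\Psi$ eliminates the degree-one spherical harmonic modes which would otherwise be obstructive (as in the classical B-inequality), while the convexity of $t \mapsto w(e^t)$ controls the zeroth (radial) mode; these two ingredients together produce the dimensional factor $\rmint_U \ms{L}_\mu(h)\,\diff\nu$ on the right-hand side.

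The main technical obstacle will be the transfer of the spectral inequality from the rotationally invariant reference measure $\mu$ to the tilted (generally non-radial) measure $\nu$. I intend to handle this by decomposing $u$ into spherical harmonics with respect to the radial structure induced by $\mu$: the non-radial modes are controlled uniformly in $\Psi$ by the spectral gap of the spherical Laplacian together with the evenness assumption, while the radial mode reduces, via the assumption that $t\mapsto w(e^t)$ is convex, to the one-dimensional weighted Brascamp--Lieb inequality at the heart of~\cite{EM21,CR23}. The arbitrary Neumann data for $u$ on $\partial U$ generate boundary terms in the Reilly formula which will be reabsorbed using the convexity of $U$ and the even symmetry of the data.
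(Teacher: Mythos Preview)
Your plan diverges substantially from the paper's argument and, as written, has real gaps.

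The paper's proof is much shorter than your outline suggests. It never invokes the Reilly--Bochner formula~\eqref{eq:bochner}, never decomposes $u$ in spherical harmonics, and never manipulates $\ms{L}_\nu$ or the potential $\Psi$. Instead, the whole argument is: set $v(x)=u(x)+\tfrac{\lambda}{2}|x|^2$ for a free parameter $\lambda$; since $v$ is even, each $\partial_i v$ is odd, and \cite[Theorem~4]{CR23} applied directly to $\partial_i v$ (summed over $i$) gives
\[
\rmint \|\nabla^2 v\|_{\mr{HS}}^2\,\diff\nu \ \ge\ \rmint \frac{w'(|x|)}{|x|}\,|\nabla v|^2\,\diff\nu .
\]
Expanding both sides in $\lambda$ and recognizing $\ms{L}_\mu g=\Delta g-\tfrac{w'(|x|)}{|x|}\langle\nabla g,x\rangle$ turns this into
\[
\rmint \|\nabla^2 u\|_{\mr{HS}}^2 + \Big\langle\big(\nabla^2W+\tfrac{w'(|x|)}{|x|}\msf{Id}\big)\nabla u,\nabla u\Big\rangle\,\diff\nu \ \ge\ -2\lambda\!\rmint\ms{L}_\mu u\,\diff\nu - \lambda^2\!\rmint\ms{L}_\mu(|x|^2/2)\,\diff\nu .
\]
The convexity of $t\mapsto w(e^t)$ is exactly the statement $\nabla^2W+\tfrac{w'(|x|)}{|x|}\msf{Id}\succeq 0$, so the extra term on the left is nonnegative; optimizing over $\lambda$ finishes. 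No boundary terms arise, and the Neumann data on $\partial U$ plays no role whatsoever.

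Against this, your plan has two concrete problems. First, you treat the ``transfer from $\mu$ to $\nu$'' as the main obstacle and propose to resolve it by a spherical-harmonic decomposition of $u$; but \cite[Theorem~4]{CR23} is \emph{already} stated for any even $\nu$ that is log-concave with respect to $\mu$, so there is nothing to transfer. A spherical-harmonic decomposition is neither natural for a non-radial $\nu$ on a general convex $U$ nor needed. Second, routing the argument through the Reilly--Bochner identity for $\nu$ introduces the boundary term $-2\rmint_{\Sigma}\langle\nabla_\Sigma u,\nabla_\Sigma\psi\rangle\,\diff\nu$ with \emph{arbitrary} Neumann data $\psi$; convexity of $U$ and evenness do not give this a sign, so your claim that it can be ``reabsorbed'' is unsupported. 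The substitution $v=u+\lambda h$ is the idea you are missing: it packages the $\lambda$-dependence into a single application of the CR23 Poincar\'e inequality and bypasses all of the above.
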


\begin{proof}
Write $\diff \mu(x) = e^{-W(x)}\diff x$ with $W(x)=w(|x|)$. Since $w$ is increasing, we have 
$\langle \nabla W(x) , x \rangle \ge 0$, and thus condition $(i)$ from Definition~\ref{def:curvature} is verified. 
Let $\nu$ and $u$ be as in part $(ii)$ of Definition~\ref{def:curvature} and let $\lambda\in\R$ be a parameter to be chosen later. Consider the function $v:\R^n\to\R$ given by 
$$\forall \ x\in\R^n,\qquad v(x) = u(x)+\frac{\lambda}{2} |x|^2 .$$
Since $v$ is even, each partial derivative $\partial_i v$ is odd and thus \cite[Theorem~4]{CR23} yields
\begin{equation}\label{eq:CR23}
\begin{split}
\rmint_{\R^n}  \|\nabla^2v\|_{\mr{HS}}^2 \,\diff\nu & = \sum_{i=1}^n \rmint_{\R^n} \big|\nabla \partial_i v\big|^2\,\diff\nu(x) 
\\ & \geq \sum_{i=1}^n \rmint_{\R^n}\frac{w'(|x|)}{|x|} \big(\partial_i v(x)\big)^2\,\diff\nu(x) = \rmint_{\R^n}\frac{w'(|x|)}{|x|} |\nabla v(x)|^2\,\diff\nu(x)
\end{split}
\end{equation}
From the definition of $v$,  we have, pointwise,
\begin{equation*} \label{eq:split-grad}
|\nabla v(x)|^2 = |\nabla u(x)|^2 + 2\lambda \langle \nabla u(x), x\rangle + \lambda^2 |x|^2
\end{equation*}
and
\begin{equation*} \label{eq:split-x2}
\begin{split}
 \|\nabla^2&v\|_{\mr{HS}}^2 = \|\nabla^2u\|_{\mr{HS}}^2 + 2\lambda \Delta u +\lambda^2 n.
\end{split}
\end{equation*}
Plugging this in~\eqref{eq:CR23}, we get
\begin{eqnarray*}
\rmint_{\R^n}  \|\nabla^2u\|_{\mr{HS}}^2 \, d\diff\nu
&\ge &  \rmint_{\R^n}\frac{w'(|x|)}{|x|} |\nabla u(x)|^2\,\diff\nu(x)  
-2\lambda \rmint_{\R^n} \Big(\Delta u(x)  - \frac{w'(|x|)}{|x|} \langle \nabla u(x), x\rangle \Big)\,\diff\nu(x) \\ 
& & \quad   - \lambda^2\Big( n- \rmint_{\R^n} |x| w'(|x|) \,\diff\nu(x)\Big) \\
  &= & \rmint_{\R^n}\frac{w'(|x|)}{|x|} |\nabla u(x)|^2\,\diff\nu(x)  - 
 2\lambda \rmint_{\R^n} \ms{L}_\mu u \, \diff\nu - \lambda^2 \rmint_{\R^n} \ms{L}_\mu (|x|^2/2) \, \diff\nu,
\end{eqnarray*}
since $\ms{L}_\mu h = \Delta h -  \frac{w'(|x|)}{|x|} \langle \nabla h, x\rangle $ for any smooth function $h$, where the dependence on $x$ is implicit to simplify notation.
Therefore we have that 
\begin{multline*}\label{eq:x2-almost-done}
    \rmint_{\R^n} \|\nabla^2 u\|_{\mr{HS}}^2 + \langle \nabla^2 W \nabla u, \nabla u\rangle\,\diff\nu \\ 
    \geq\rmint_{\R^n} \left\langle \Big( \nabla^2W + \frac{w'(|x|)}{|x|} \msf{Id} \Big) \nabla u,\nabla u \right\rangle\,\diff\nu
 -  2\lambda \rmint_{\R^n} \ms{L}_\mu u \, \diff\nu - \lambda^2 \rmint_{\R^n} \ms{L}_\mu (|x|^2/2) \, \diff\nu.
\end{multline*} 
But  by our assumption that $t\mapsto w(e^t)$ is increasing and convex, we have $\nabla^2W(x) + \frac{w'(|x|)}{|x|} \msf{Id} \succeq 0$ and thus the first integral is nonnegative. Therefore, we  get
\begin{equation*}
 \rmint_{\R^n} \|\nabla^2 u\|_{\mr{HS}}^2 + \langle \nabla^2 W \nabla u, \nabla u\rangle\,\diff\nu \ge   - 
 2\lambda \rmint_{\R^n} \ms{L}_\mu u \, \diff\nu - \lambda^2 \rmint_{\R^n} \ms{L}_\mu (|x|^2/2) \, \diff\nu.
\end{equation*}
Optimizing over $\lambda$ gives the desired inequality \eqref{eq:x2}.
\end{proof}


\subsection{Proof of Theorem \ref{thm:bbl}} We can now give the proof of our first main result. As announced, we shall show that the conclusion of Theorem \ref{thm:bbl} holds for all hereditarily convex even measures.

\begin{theorem} \label{thm:bbl-gen}
Let $W:\R^n\to\R$ be a $C^2$ smooth even function and assume that the measure $\mu$ on $\R^n$ with density $e^{-W(x)}$ is hereditarily convex. Moreover, let $\Omega\subseteq \R^{n+1}$ be a convex set such that for each $t\in\R$, the set $\Omega_t=\{x\in\R^n: \ (t,x)\in\Omega\}$ is symmetric, and $\Phi:\Omega\to\R_+$ a concave function such that for each $t$, the function $\Phi(t,\cdot)$ is even on $\Omega_t$.  Then, 
\begin{equation} \label{eq:varphi-thm-gen}
\varphi(t) \eqdef \Bigg( \rmint_{\Omega_t} \Phi(t,x)^\beta\,\diff\mu(x)\Bigg)^{\frac{1}{\beta+n}}
\end{equation}
is concave on its support for every $\beta>0$, provided that the integral converges.
\end{theorem}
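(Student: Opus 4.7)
The strategy is to apply the second-derivative identity of Proposition~\ref{prop:nguyen} at an arbitrary interior point (after translation, $t=0$) with the critical exponent $\gamma=1/(\beta+n)$, and show the resulting expression is non-positive by combining the hereditary convexity of $\mu$ with the concavity of $\Phi$ and the convexity of $\Omega$. A preliminary approximation reduces to the regular setting required by Proposition~\ref{prop:nguyen}: $\Omega$ bounded, open, strictly convex with $C^3$ boundary; each section $\Omega_t$ strictly convex with smooth nonempty interior on an open interval around~$0$; $\Phi,W$ of class $C^2$ with $\Phi>0$ on $\overline{\Omega}$; and all evenness/symmetry hypotheses preserved. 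Concavity of $\varphi$ on its support then reduces to $\varphi''(0)\le 0$.

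The choice $\gamma=1/(\beta+n)$ yields the identities $1-\beta\gamma=n\gamma$ and $\tfrac{1}{\beta}-\gamma=n\gamma/\beta$, which cause the ``constant'' terms of~\eqref{eq:nguyen} to collapse into $-\tfrac{n\gamma}{\beta}E^2-2cE$, where $A\eqdef\rmint_{\Omega_0}\partial_t\Phi_0/\Phi_0\,\diff\nu$, $B\eqdef\rmint_{\Sigma_0}\psi\,\diff\nu$, $c\eqdef\rmint_{\Omega_0}\ms{L}_\mu u\,\diff\nu$ and $E\eqdef\beta A+B$. By the symmetry of the data (evenness of $\Phi(0,\cdot)$, $\mu$, $\Omega_0$, and of $\psi$---the latter because $\Omega_t$ is symmetric for each $t$), the Neumann problem~\eqref{eq:nguyen-eq}--\eqref{eq:nguyen-neumann} admits an \emph{even} solution $u$, permitting us to invoke the hereditary convexity of $\mu$ (Definition~\ref{def:curvature}) to obtain
\[
\rmint_{\Omega_0}\|\nabla^2 u\|_{\mr{HS}}^2+\langle\nabla^2W\nabla u,\nabla u\rangle\,\diff\nu\ \ge\ \frac{c^2}{D},\qquad D\eqdef\rmint_{\Omega_0}\ms{L}_\mu\bigl(|x|^2/2\bigr)\,\diff\nu\in(0,n].
\]
Combined with the Cauchy--Schwarz bound $\rmint_{\Omega_0}(\ms{L}_\mu u)^2\,\diff\nu\ge c^2$, the interior ``$u$-part'' of~\eqref{eq:nguyen} will contribute at most $-\beta(\beta+D)c^2/D$, and the resulting quadratic form $-\tfrac{\beta(\beta+D)}{D}c^2-2cE-\tfrac{n\gamma}{\beta}E^2$ in $(c,E)$ is non-positive iff its discriminant is, which reduces precisely to $D\le n$---the upper bound furnished by property~(i) of Definition~\ref{def:curvature}.

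The remaining terms in~\eqref{eq:nguyen} will be controlled by geometric inputs: the Hessian term $\beta\rmint_{\Omega_0}\langle\nabla^2_{(t,x)}\Phi_0\,X,X\rangle/\Phi_0\,\diff\nu\le 0$ by concavity of $\Phi$; the boundary quadratic $\rmint_{\Sigma_0}[\beta^2\langle\mr{I\!I}\nabla_{\Sigma_0}u,\nabla_{\Sigma_0}u\rangle+\langle\mr{I\!I}^{-1}\nabla_{\Sigma_0}\psi,\nabla_{\Sigma_0}\psi\rangle+2\beta\langle\nabla_{\Sigma_0}\psi,\nabla_{\Sigma_0}u\rangle]\,\diff\nu\ge 0$, since strict convexity of $\Omega_0$ gives $\mr{I\!I}\succ 0$ and the integrand equals $|\beta\mr{I\!I}^{1/2}\nabla_{\Sigma_0}u+\mr{I\!I}^{-1/2}\nabla_{\Sigma_0}\psi|^2$; and $\rmint_{\Sigma_0}\partial_t\psi_t|_{t=0}\,\diff\nu\le 0$ because the convexity of $\Omega\subset\R^{n+1}$ makes $t\mapsto h_{\Omega_t}(\theta)$ concave for every unit $\theta$, so $\partial_{tt}g\le 0$. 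The main obstacle is the residual boundary cross-term $-\beta\rmint_{\Sigma_0}(\partial_t\Phi_0/\Phi_0)\psi\,\diff\nu$, which is not a priori sign-definite: the plan is to refine the approximation so that $\Phi\to 0$ on $\partial\Omega$, in which limit the density $\Phi_0^\beta$ forces the integrand to vanish when $\beta>1$; for $\beta\in(0,1]$ one integrates by parts using the Neumann condition $\partial u/\partial\hat{\nu}_{\Sigma_0}=-\psi/\beta$ to absorb the contribution into the already-controlled interior quantities. Summing all contributions will yield $\varphi''(0)\le 0$.
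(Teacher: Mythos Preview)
Your overall strategy---apply Proposition~\ref{prop:nguyen} with $\gamma=\tfrac{1}{\beta+n}$ and control each term---is exactly the paper's, and most of your term-by-term analysis is correct. In particular, your treatment of the interior Hessian term (via concavity of $\Phi$), the boundary quadratic form (via $\mr{I\!I}_{\Sigma_0}\succ0$), and the quadratic form in $(c,E)$ (via hereditary convexity plus $D\le n$) all match the paper. Your argument that $\partial_t\psi_t|_{t=0}\le0$ because $t\mapsto h_{\Omega_t}(\theta)$ is concave for each fixed $\theta$ is correct and is in fact cleaner than the paper's route for that particular term.

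The genuine gap is your handling of the residual boundary cross-term $-\beta\rmint_{\Sigma_0}(\partial_t\Phi_0/\Phi_0)\psi\,\diff\nu$. Neither of your two proposed fixes works. For $\beta>1$, ``letting $\Phi\to0$ on $\partial\Omega$'' is incompatible with the hypothesis $\Phi>0$ on $\overline{\Omega}$ in Proposition~\ref{prop:nguyen}: once $\Phi_0$ vanishes on $\Sigma_0$ the probability measure $\nu$ degenerates there, the Neumann problem \eqref{eq:nguyen-eq}--\eqref{eq:nguyen-neumann} need not be well-posed, and you cannot invoke the identity \eqref{eq:nguyen} in the first place; arguing by approximation with $\Phi+\varepsilon$ brings you right back to needing a sign for the cross-term at positive $\varepsilon$. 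For $\beta\in(0,1]$, your ``integrate by parts using the Neumann condition'' would simply undo manipulations already performed in deriving \eqref{eq:nguyen} and reintroduce the same uncontrolled quantities.

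The paper resolves this by a structural reduction you omit: one first proves the result under the extra assumption that $\Omega$ is a \emph{super-level set} of $\Phi$, i.e.\ $\Phi\equiv\eta>0$ on $\partial\Omega$. This forces $\nabla_x\Phi_0$ to be normal to $\Sigma_0$ and yields the pointwise identity $\partial_t\Phi_0(y)=\delta(y)\psi(y)$ on $\Sigma_0$ with $\delta(y)=-\langle\nabla_x\Phi_0(y),\hat\nu_{\Sigma_0}(y)\rangle\ge0$ (Lemma~\ref{lem:level-set}). The cross-term then becomes $-\tfrac{\beta}{\eta}\rmint_{\Sigma_0}\delta\,\psi^2\,\diff\nu\le0$, uniformly in $\beta>0$. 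The general case is recovered by a two-stage approximation: mollify $\Phi$ to be smooth and strictly concave, pass to the level sets $\{\Phi>\eta+1/m\}$ (which inherit smoothness and strict convexity), and finally use an extension lemma to drop the level-set constraint. Your ``preliminary approximation'' does not include this level-set step, and without it the cross-term cannot be signed.
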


The proof of Theorem \ref{thm:bbl-gen} shall follow from the representation of the second derivative of $\varphi$ established in Proposition \ref{prop:nguyen} along with a series of approximation arguments (which were missing from \cite{Ngu14a} in the study of the unweighted analogue of Theorem \ref{thm:bbl}, where $\Omega$ was only assumed to be a product set). As it turns out, a key new idea here is to assume a priori that the domain $\Omega$ is a super-level-set of the function $\Phi$ as some of the problematic boundary terms in \eqref{eq:nguyen} then become meaningful.  In other words, we start by considering functions of the form
\begin{equation}
\varphi(t) \eqdef \Bigg(\rmint \big(\Phi(t,x)-\eta\big)_+^\beta \,\diff\mu(x)\Bigg)^{\frac{1}{\beta+n}}
\end{equation}
for rather smooth $\Phi$ and then we proceed by approximation. We present the proof in a few steps.

\medskip

\noindent {\bf Step 1.} Further to the assumptions of Theorem \ref{thm:bbl-gen}, assume that:
\begin{enumerate}
\item[A.] The convex set $\Omega$ is open, bounded and strictly convex with $C^3$ smooth boundary;
\item[B.]  The function $\Phi$ is $C^2$ smooth on $\overline{\Omega}$; 
\item[C.] There exists $\eta>0$ such that if $\Sigma = \partial\Omega$, then
\begin{equation} \label{eq:super-level}
\forall \ (t,x)\in\Sigma, \qquad \Phi(t,x) \equiv \eta.
\end{equation}
\end{enumerate}
Then, the function \eqref{eq:varphi-thm-gen} is concave on its support.

\medskip

As concavity is a local property, it suffices to assume that $\Omega_t$ is nonempty for $t\in(-\eta,\eta)$ and prove that $\varphi''(0)\leq 0$.  For the proof of Step 1, we shall use the following elementary lemma.

\begin{lemma} \label{lem:level-set}
Under the assumptions of Step 1,  if $\Sigma_0=\partial\Omega_0$,  then we have
\begin{equation} \label{eq:leve}
\forall \ y\in\Sigma_0,\qquad \partial_t \Phi(0,y) = \delta(y) \psi(y),
\end{equation}
where $\delta(y)\eqdef - \langle \nabla_x\Phi(0,y), \hat{\nu}_{\Sigma_0}(y)\rangle \geq 0$ and $\psi\equiv\psi_0:\Sigma_0\to\R$ is the variation appearing in \eqref{eq:Minkowski-var}. 
\end{lemma}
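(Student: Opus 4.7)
The proof consists of two essentially independent ingredients: a normality statement for $\nabla_x\Phi(0,y)$ along $\Sigma_0$, and differentiation of the boundary identity $\Phi(t,F_t(y)) \equiv \eta$ using the parametrization formula from Lemma~\ref{lem:Minkowski-var}.

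The key observation is that, by hypothesis~\eqref{eq:super-level}, the function $\Phi$ is constant on the smooth hypersurface $\Sigma \subseteq \R^{n+1}$, so its full gradient $\nabla_{(t,x)}\Phi(0,y) = \bigl(\partial_t\Phi(0,y),\nabla_x\Phi(0,y)\bigr)$ is normal to $\Sigma$ at $(0,y)$. Now, for every tangent vector $\tau$ to $\Sigma_0$ at $y$, there is a curve $\gamma$ in $\Sigma_0$ with $\gamma(0)=y$ and $\gamma'(0)=\tau$, so $s \mapsto (0,\gamma(s))$ is a curve in $\Sigma$ with tangent $(0,\tau)$ at $s=0$; hence $(0,\tau)$ is tangent to $\Sigma$ at $(0,y)$. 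Pairing against the normal yields $\langle \nabla_x\Phi(0,y),\tau\rangle = 0$ for every such $\tau$, so $\nabla_x\Phi(0,y)$ is parallel to $\hat{\nu}_{\Sigma_0}(y)$, and by the very definition of $\delta$ we obtain
\begin{equation*}
\nabla_x\Phi(0,y) = -\delta(y)\,\hat{\nu}_{\Sigma_0}(y).
\end{equation*}

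Next I would differentiate the identity $\Phi(t,F_t(y)) = \eta$, which holds for all small $t$ because $F_t(y) \in \Sigma_t \subseteq \Sigma$. Using $F_0 = \mathrm{Id}$ and formula~\eqref{eq:Minkowski-var}, the chain rule at $t=0$ gives
\begin{equation*}
0 = \partial_t\Phi(0,y) + \bigl\langle \nabla_x\Phi(0,y),\ \psi(y)\,\hat{\nu}_{\Sigma_0}(y) + \mr{I\!I}_{\Sigma_0}^{-1}(y)\nabla_{\Sigma_0}\psi(y)\bigr\rangle.
\end{equation*}
Substituting the normality relation just derived, the inner product with the tangential term $\mr{I\!I}_{\Sigma_0}^{-1}(y)\nabla_{\Sigma_0}\psi(y)$ vanishes, while the normal part contributes exactly $-\delta(y)\psi(y)$. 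Rearranging yields the claimed identity~\eqref{eq:leve}.

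It remains to verify the sign $\delta(y) \geq 0$. This is immediate from the concavity of $\Phi$ and~\eqref{eq:super-level}: a concave function on a convex domain with constant boundary value $\eta$ satisfies $\Phi \geq \eta$ throughout $\Omega$, so the outward normal derivative of $\Phi(0,\cdot)$ along $\Sigma_0$ is nonpositive. I do not anticipate any real obstacle in this lemma; the only step requiring a moment's care is the tangent-space inclusion of $\{0\} \times T\Sigma_0$ into $T\Sigma$ at $(0,y)$, which we handled above via curves.
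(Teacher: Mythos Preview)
Your proof is correct and follows essentially the same approach as the paper: both establish that $\nabla_x\Phi(0,y)$ is parallel to $\hat{\nu}_{\Sigma_0}(y)$ via tangent curves, then differentiate $\Phi(t,F_t(y))\equiv\eta$ using Lemma~\ref{lem:Minkowski-var}. The only minor difference is in the sign argument for $\delta(y)\geq 0$: the paper uses evenness and concavity of $\Phi(0,\cdot)$ to get radial monotonicity toward the origin, whereas your ``concave with constant boundary value implies $\Phi\geq\eta$'' argument is slightly cleaner and does not invoke evenness.
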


\begin{proof}
Suppose that $\gamma:(-\e,\e)\to\Sigma_t$ is a smooth curve with $\gamma(0)=x$. Then, differentiating the assumption \eqref{eq:super-level}, we get that
\begin{equation}
0= \frac{\diff}{\diff s}\bigg|_{s=0} \Phi\big(t, \gamma(s)\big) = \left\langle \nabla_x \Phi(t,x) ,  \gamma'(0) \right\rangle.
\end{equation}
Since this holds for an arbitrary tangent curve $\gamma$, we deduce that
\begin{equation} \label{eq:para}
\nabla_x\Phi(t,x) = \langle \nabla_x\Phi(t,x),\hat{\nu}_{\Sigma_t}(x) \rangle \cdot \hat{\nu}_{\Sigma_t}(x),
\end{equation}
where $\hat{\nu}_{\Sigma_t}(x)$ is the outward pointing unit normal of $\Sigma_t$ at $x$.  Moreover, as $\Phi(t,\cdot)$ is even and concave on $\overline{\Omega_t}$, we conclude that the function $[0,1]\ni r \mapsto \Phi(t,rx)$ is decreasing. Therefore,
\begin{equation}
0\geq \frac{\diff}{\diff r} \bigg|_{r=1} \Phi(t,rx) = \left\langle \nabla_x\Phi(t,x), x\right\rangle \stackrel{\eqref{eq:para}}{=} \langle \nabla_x\Phi(t,x),\hat{\nu}_{\Sigma_t}(x) \rangle \left\langle x, \hat{\nu}_{\Sigma_t}(x)\right\rangle,
\end{equation}
which readily implies $\langle \nabla_x\Phi(t,x),\hat{\nu}_{\Sigma_t}(x) \rangle\leq0$ by the convexity of $\overline{\Omega_t}$.

If $y\in\Sigma_0$ and $x=F(t,y)\in\Sigma_t$, the assumption \eqref{eq:super-level} also gives
\begin{equation} \label{eq:para2}
\begin{split}
0 & =  \frac{\partial}{\partial t} \Phi\big(t, F(t,y)\big) = \partial_t \Phi(t,x) + \left\langle \nabla_x\Phi(t,x), \frac{\partial}{\partial t} F(t,y)\right\rangle
\\ & \stackrel{\eqref{eq:Minkowski-var}\wedge\eqref{eq:para}}{=} \partial_t \Phi(t,x) + \left\langle  \langle \nabla_x\Phi(t,x),\hat{\nu}_{\Sigma_t}(x) \rangle \cdot \hat{\nu}_{\Sigma_t}(x),  \psi_t(y) \hat\nu_{\Sigma_0}(y) + \mr{I\!I}_{\Sigma_0}^{-1}(y) \nabla_{\Sigma_0} \psi_t(y)  \right\rangle 
\\ & \stackrel{\eqref{eq:compat}}{=}  \partial_t \Phi(t,x) + \left\langle  \langle \nabla_x\Phi(t,x),\hat{\nu}_{\Sigma_t}(x) \rangle \cdot \hat{\nu}_{\Sigma_0}(y),  \psi_t(y) \hat\nu_{\Sigma_0}(y) + \mr{I\!I}_{\Sigma_0}^{-1}(y) \nabla_{\Sigma_0} \psi_t(y)  \right\rangle
\\ & = \partial_t \Phi(t,x) +  \langle \nabla_x\Phi(t,x),\hat{\nu}_{\Sigma_0}(y) \rangle \psi_t(y),
\end{split}
\end{equation}
which confirms the desired identity \eqref{eq:leve} for $t=0$.
\end{proof}

We are now ready to complete the proof of Step 1.

\begin{proof} [Proof of Step 1]
We shall assume that $\Omega_t$ is nonempty for $t\in(-\eta,\eta)$ and prove that $\varphi''(0)\leq 0$.  As before, we denote by $\Phi_0 =  \Phi(0,\cdot)$, $\partial_t\Phi_0 = \partial_t\Phi(0,\cdot)$,  $\psi\equiv\psi_0$ as in \eqref{eq:Minkowski-var},  $\nu$ as in \eqref{eq:u} and $u$ satisfying \eqref{eq:nguyen-eq} and \eqref{eq:nguyen-neumann}.  The assumptions of Proposition \ref{prop:nguyen} are satisfied, so we can use identity \eqref{eq:nguyen} for $\varphi''(0)$. 

By the joint concavity of $\Phi$, we have
\begin{equation}
\frac{\langle\nabla^2_{(t,x)}\Phi(0,\cdot) V,V\rangle}{\Phi(0,\cdot)} \leq 0
\end{equation}
for $V\in\R^{n+1}$, so the term in the first line of \eqref{eq:nguyen} is nonpositive. Moreover, fixing $y\in\Sigma_0$ and any $\msf{u},\msf{v}$ in the tangent space $\msf{T}_y\Sigma_0$, the Cauchy--Schwarz inequality for positive quadratic forms gives
\begin{equation}
\langle \mr{I\!I}_{\Sigma_0} \msf{u}, \msf{u}\rangle +\langle \mr{I\!I}_{\Sigma_0}^{-1}\msf{v},\msf{v}\rangle \geq- 2 \langle \msf{v}, \msf{u}\rangle.
\end{equation}
Therefore, the last term of \eqref{eq:nguyen} is also nonpositive choosing $\msf{u} = \beta\nabla_{\Sigma_0} u(y)$ and $\msf{v} = \nabla_{\Sigma_0} \psi(y)$. The hereditary convexity assumption for $\mu$ (in the dimensional form \eqref{eq:x2n}) combined with Jensen's inequality further give the estimate
\begin{equation} \label{eq:use-her}
\begin{split}
-\beta^2\rmint_{\Omega_0} \|\nabla^2u\|_{\mr{HS}}^2 & +  \langle \nabla^2W\nabla u, \nabla u \rangle\,\diff\nu  - \beta \rmint_{\Omega_0} \big(\ms{L}_\mu u\big)^2\,\diff\nu
 \stackrel{\eqref{eq:x2n}}{\leq} - \frac{\beta(\beta+n)}{n} \Bigg(  \rmint_{\Omega_0} \ms{L}_\mu u\,\diff\nu\Bigg)^2.
\end{split}
\end{equation}
Adopting the notations
\begin{equation}
A= \rmint_{\Omega_0} \ms{L}_\mu u\,\diff\nu, \quad B=\rmint_{\Omega_0} \frac{\partial_t \Phi_0}{\Phi_0}\,\diff\nu, \quad \mbox{and} \quad C=\rmint_{\Sigma_0} \psi \,\diff\nu,
\end{equation}
omitting the above nonpositive terms and using \eqref{eq:use-her}, we get from \eqref{eq:nguyen} with $\gamma=\frac{1}{\beta+n}$ that
\begin{alignat*}{2}
\frac{1}{\gamma}\frac{\varphi''(0)}{\varphi(0)}& \leq - \frac{\beta(\beta+n)}{n} A^2 - \frac{\beta n}{\beta+n} B^2  - \frac{n}{\beta(\beta+n)} C^2 -2\beta AB&& -2AC - \frac{2n}{\beta+n} BC 
\\ & &&\!\!\!\! -  \beta\!\rmint_{\Sigma_0} \frac{\partial_t\Phi_0}{\Phi_0}\cdot\psi\diff\nu + \rmint_{\Sigma_0} \frac{\partial}{\partial t}\bigg|_{t=0}\psi_t \,\diff\nu
\\ & = -\left(\sqrt{\frac{\beta(\beta+n)}{n}} A+\sqrt{\frac{\beta n}{\beta+n}}B+\sqrt{\frac{n}{\beta(\beta+n)}}C \right)^2 
&&\!\!\!\! -  \beta\!\rmint_{\Sigma_0}\frac{\partial_t\Phi_0}{\Phi_0}\cdot\psi\,\diff\nu + \rmint_{\Sigma_0} \frac{\partial}{\partial t}\bigg|_{t=0}\psi_t \,\diff\nu
\\ & \leq - \beta\rmint_{\Sigma_0}\frac{\partial_t\Phi_0}{\Phi_0}\cdot\psi\,\diff\nu + \rmint_{\Sigma_0} \frac{\partial}{\partial t}\bigg|_{t=0}\psi_t \,\diff\nu.
\end{alignat*}
To bound this remaining terms,  we resort to Lemma \ref{lem:level-set}.  Indeed, by \eqref{eq:super-level} and \eqref{eq:leve},
\begin{equation} \label{eq:first-pro}
- \beta\rmint_{\Sigma_0}\frac{\partial_t\Phi_0}{\Phi_0}\cdot\psi\,\diff\nu =  -\frac{\beta}{\eta} \rmint_{\Sigma_0} \delta(y) \cdot \psi(y)^2\,\diff\nu(y) \leq0
\end{equation}
and thus
\begin{equation} \label{eq:one-left}
\frac{1}{\gamma}\frac{\varphi''(0)}{\varphi(0)} \leq \rmint_{\Sigma_0} \frac{\partial}{\partial t}\bigg|_{t=0}\psi_t \,\diff\nu.
\end{equation}
Finally, to bound this last term observe that in the terminology of Lemma \ref{lem:Minkowski-var},  for $y\in\Sigma_0$,
\begin{equation} \label{eq:part-psi}
\begin{split}
\frac{\partial}{\partial t}\bigg|_{t=0} \psi_t(y) = \frac{\partial^2}{\partial t^2}\bigg|_{t=0} g(t,y) & \stackrel{\eqref{eq:defn-g}}{=}  \frac{\partial^2}{\partial t^2}\bigg|_{t=0} h_{\Omega_t}\big(\hat{\nu}_{\Sigma_0}(y)\big) =  \frac{\partial^2}{\partial t^2}\bigg|_{t=0} \left\langle \hat{\nu}_{\Sigma_t}^{-1} \circ \hat{\nu}_{\Sigma_0}(y) , \hat{\nu}_{\Sigma_0}(y) \right\rangle
\\ & \stackrel{\eqref{eq:defn-F}}{=}  \frac{\partial^2}{\partial t^2}\bigg|_{t=0} \left\langle F(t,y), \hat{\nu}_{\Sigma_0}(y)\right\rangle = \left\langle \frac{\partial^2}{\partial t^2}\bigg|_{t=0} F(t,y), \hat{\nu}_{\Sigma_0}(y)\right\rangle.
\end{split}
\end{equation}
Differentiating the first line of \eqref{eq:para2}, we get
\begin{alignat*}{2}
0 & = \frac{\partial}{\partial t}\bigg|_{t=0} &&\!\!\!\!\!\!\! \bigg\{ \partial_t\Phi\big(t, F(t,y)\big) + \left\langle \nabla_x\Phi\big(t,F(t,y)\big), \frac{\partial}{\partial t} F(t,y) \right\rangle\bigg\} 
\\ & = \partial_{tt}\Phi(0,y) && + 2 \left\langle \nabla_x\partial_t\Phi(0,y), \frac{\partial}{\partial t}\bigg|_{t=0} F(t,y) \right\rangle 
\\ & &&+ \left\langle \nabla_x^2\Phi(0,y) \frac{\partial}{\partial t}\bigg|_{t=0} F(t,y), \frac{\partial}{\partial t}\bigg|_{t=0} F(t,y)\right\rangle + \left\langle \nabla_x\Phi(0,y), \frac{\partial^2}{\partial t^2}\bigg|_{t=0} F(t,y) \right\rangle
\\ & \stackrel{\eqref{eq:para}}{=} &&\!\!\!\!\!\!\!\!\!\!\!\!\!\!\!\!\!\!\!\!\!\!  \left\langle \nabla_{(t,x)}^2  \Phi(0,y) \bigg(1,\frac{\partial}{\partial t}\bigg|_{t=0} F(t,y)\bigg), \bigg(1,\frac{\partial}{\partial t}\bigg|_{t=0} F(t,y)\bigg)\right\rangle
\\ & &&  + \langle \nabla_x\Phi(0,y),\hat{\nu}_{\Sigma_0}(y)\rangle  \left\langle \frac{\partial^2}{\partial t^2}\bigg|_{t=0} F(t,y), \hat{\nu}_{\Sigma_0}(y)\right\rangle
\\ & \leq - \delta(y) && \!\!\!\!\!\!\!\!\! \left\langle  \frac{\partial^2}{\partial t^2}\bigg|_{t=0} F(t,y), \hat{\nu}_{\Sigma_0}(y)\right\rangle,
\end{alignat*}
where the last inequality follows by the concavity of $\Phi$ and the definition of $\delta(y)$ from Lemma \ref{lem:level-set}. Combining this with \eqref{eq:one-left} and \eqref{eq:part-psi}, we conclude that $\varphi''(0)\leq0$ by the positivity of $\delta(y)$.
\end{proof} 

\medskip

\noindent {\bf Step 2.} Further to the assumptions of Theorem \ref{thm:bbl-gen}, assume that:
\begin{enumerate}
\item[A.] The convex set $\Omega$ is open and bounded;
\item[B.] The function $\Phi$ is continuous on $\overline{\Omega}$ and there exists $\eta>0$ such that if $\Sigma= \partial\Omega$, then
\begin{equation} \label{eq:super-level-2}
\forall \ (t,x)\in\Sigma, \qquad \Phi(t,x) \equiv \eta.
\end{equation}
\end{enumerate}
Then, the function \eqref{eq:varphi-thm-gen} is concave on its support.

\medskip

\begin{proof} [Proof of Step 2.]

Notice that,  unlike Step 1, we do not assume any regularity for the function $\Phi$ or the boundary of $\Omega$.  For $\e>0$ small enough, consider the function $\Phi_\e:\overline{\Omega}\to\R_+$ given by
\begin{equation}
\forall \ (t,x)\in\overline{\Omega}, \qquad \Phi_\e(t,x) = \Phi\big(t,(1-\e)x\big)
\end{equation}
and observe that it satisfies
\begin{equation}
\max_{(t,x)\in\overline{\Omega}}\big|\Phi_\e(t,x)-\Phi(t,x)\big| = \max_{(t,x)\in\overline{\Omega}}\big|\Phi\big(t,(1-\e)x\big)-\Phi(t,x)\big| \leq \omega_\Phi(\e\cdot \mr{diam}\Omega),
\end{equation}
where $\omega_\Phi$ is the modulus of continuity of $\Phi$ which satisfies $\lim_{\delta\to0^+}\omega_\Phi(\delta)=0$ as $\Phi$ is continuous (and thus uniformly continuous) on the compact set $\overline{\Omega}$.

Moreover,  let $\eta_\e:\msf{B}^{n+1}_2(0,\e)\to\R_+$ be a $C^\infty$ smooth mollifier which is even in the $x$-coordinates and is supported on the ball of radius $\e$. Then, consider the function
\begin{equation}
\begin{split}
\forall \ (t,x)\in\overline{\Omega},\qquad \Psi_\e(t,x) \eqdef & \big(\Phi_\e\ast\eta_e\big)(t,x) -\e(t^2+|x|^2)
\\ & = \rmint_{\msf{B}_2^{n+1}(0,\e)} \eta_\e(s,y) \Phi_\e(t-s,x-y)\,\diff s\diff y -\e(t^2+|x|^2).
\end{split}
\end{equation}
It is immediate that $\Psi_\e$ is $C^\infty$ smooth on $\overline{\Omega}$,  strictly concave and even in the $x$-coordinates.  Moreover,
\begin{equation*}
\begin{split}
\max_{(t,x)\in\overline{\Omega}}& \big| \Psi_\e(t,x)  - \Phi_\e(t,x)\big|
\\ & \leq \max_{(t,x)\in\overline{\Omega}} \rmint_{\msf{B}_2^{n+1}(0,\e)} \eta_\e(s,y)  \big| \Phi_\e(t-s,x-y) - \Phi_\e(t,x)\big| \,\diff s\diff y + \e\mr{diam}\Omega^2
 \leq \omega_{\Phi}(\e) + \e\mr{diam}\Omega^2.
\end{split}
\end{equation*}
Putting everything together, we conclude that the sequence of $C^\infty$ smooth strictly concave functions $\{\Psi_{1/k}:\overline{\Omega}\to\R_+\}_{k\geq1}$ converges uniformly to $\Phi$ on $\overline{\Omega}$.  

What remains is to ensure that $\Psi_{1/k}$ satisfies the boundary condition \eqref{eq:super-level} when restricted to an appropriate set. To that end, consider the open sets
\begin{equation}
\Omega_{k,m} = \big\{(t,x) \in{\Omega}: \ \Psi_{1/k}(t,x)> \eta+\tfrac{1}{m}\big\},
\end{equation}
which are additionally strictly convex by the strict concavity of $\Psi_{1/k}$.  Unless $\Phi$ is the constant function equal to $\eta$ (in which case the conclusion follows easily from Step 1 by approximating the domain $\Omega$ by smooth domains),  the sets $\Omega_{k,m}$ satisfy
\begin{equation} \label{eq:appr-dom}
\bigcup_{k,m\geq1} \Omega_{k,m}  = \Omega
\end{equation}
and thus they are nonempty open strictly convex sets for large enough $k,m\in\N$. Moreover,  $\Psi_{1/k}(t,x)\equiv \eta+\tfrac{1}{m}$ on the boundary of $\Omega_{k,m}$ which additionally implies that this boundary is $C^\infty$ smooth as $\nabla\Psi_{1/k}\neq 0$ on $\partial \Omega_{k,m}$.  To see this, observe that if indeed $\nabla\Psi_{1/k}(x,t)=0$ for some $(x,t)\in\partial\Omega_{k,m}$, then the condition $\Psi_{1/k}(x,y) = \eta+\tfrac1m$ and the strict concavity of $\Psi_{1/k}$ imply that $\Psi_{1/k} < \eta+\tfrac1m$ in a small  ball around $(x,t)$ which contradicts that $(x,t) \in \partial \Omega_{k,m}$

Therefore, the assumptions of Step 1 are satisfied for the functions $\Psi_{1/k}$ restricted to $\overline{\Omega}_{k,m}$ when $k,m$ are large enough. As the conclusion of Theorem \ref{thm:bbl-gen} is stable under uniform limits and \eqref{eq:appr-dom} shows that the corresponding domains coincide in the limit,  this concludes the proof of Step 2.
\end{proof}

\medskip

\noindent {\bf Step 3.}  Further to the assumptions of Theorem \ref{thm:bbl-gen}, assume that:
\begin{enumerate}
\item[A.] The convex set $\Omega$ is open and bounded;
\end{enumerate}
Then, the function \eqref{eq:varphi-thm-gen} is concave on its support.

\medskip

To complete the proof of Step 3, we shall use a simple extension lemma for concave functions.

\begin{lemma} \label{lem:extension}
Let $\Omega, \Theta$ be bounded open convex sets in $\R^{n+1}$ such that $\overline{\Theta} \subseteq \Omega$ and consider a concave function $\Phi:\overline{\Omega}\to [0,\infty)$. Then, there exists a concave function $\widetilde{\Phi}:\overline{\Omega}\to[0,\infty)$ such that $\widetilde{\Phi}\equiv \Phi$ in $\overline{\Theta}$ and $\widetilde{\Phi}\equiv 0$ on $\partial\Omega$.
\end{lemma}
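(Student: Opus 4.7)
The plan is to define $\widetilde\Phi$ geometrically as the upper envelope of an appropriate convex hull in $\R^{n+2}$, and then verify the three required properties (concavity, agreement with $\Phi$ on $\overline\Theta$, and vanishing on $\partial\Omega$) separately. Concretely, I would set
\[
S := \big\{(x,s) : x\in\overline\Theta,\ 0\le s\le \Phi(x)\big\}\ \cup\ \big(\partial\Omega\times\{0\}\big)
\]
and let $H:=\mathrm{conv}(S)\subseteq \R^{n+2}$. Since $\overline\Theta$ lies in the open set $\Omega$, the concave function $\Phi$ is continuous on the compact set $\overline\Theta$, so $S$ is compact and hence $H$ is compact by Carath\'eodory's theorem. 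I would then define
\[
\widetilde\Phi(x) := \max\big\{t\ge 0 : (x,t)\in H\big\},
\]
which is well defined and nonnegative on $\overline\Omega$ because $\overline\Omega = \mathrm{conv}(\overline\Theta\cup\partial\Omega)$ and $S$ contains both $\overline\Theta\times\{0\}$ and $\partial\Omega\times\{0\}$, hence $(x,0)\in H$ for every $x\in\overline\Omega$. The convexity of $H$ immediately yields the concavity of $\widetilde\Phi$ on $\overline\Omega$.

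Next, I would check agreement on $\overline\Theta$. The inclusion $(x,\Phi(x))\in S\subseteq H$ gives $\widetilde\Phi(x)\ge\Phi(x)$. For the converse direction, any $(x,t)\in H$ with $x\in\overline\Theta$ admits by Carath\'eodory a decomposition
\[
(x,t) = \sum_i \lambda_i(a_i,s_i) + \sum_j \mu_j(b_j,0),\qquad \lambda_i,\mu_j\ge 0,\ \sum_i\lambda_i+\sum_j\mu_j=1,
\]
with $a_i\in\overline\Theta$, $0\le s_i\le\Phi(a_i)$ and $b_j\in\partial\Omega$. Applying the concavity of $\Phi$ on $\overline\Omega$ and using $\Phi(b_j)\ge 0$ gives $\Phi(x)\ge\sum_i\lambda_i\Phi(a_i)+\sum_j\mu_j\Phi(b_j)\ge \sum_i\lambda_i s_i=t$, so $\widetilde\Phi(x)\le\Phi(x)$.

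The main obstacle is to show that $\widetilde\Phi\equiv 0$ on $\partial\Omega$, and this is where the strict inclusion $\overline\Theta\subseteq\Omega$ (and not merely $\overline\Theta\subseteq\overline\Omega$) is essential. The plan is to use a supporting hyperplane: fix $x_0\in\partial\Omega$ and choose an affine function $h:\R^{n+1}\to\R$ with $h\le 0$ on $\overline\Omega$ and $h(x_0)=0$. Since every $a\in\overline\Theta$ is an interior point of $\overline\Omega$, we get $h(a)<0$ strictly on $\overline\Theta$, whereas only $h(b)\le 0$ is guaranteed on $\partial\Omega$. Writing any point $(x_0,t)\in H$ in the Carath\'eodory form above and evaluating $h$ on the spatial component,
\[
0 = h(x_0) = \sum_i \lambda_i h(a_i) + \sum_j \mu_j h(b_j),
\]
every summand is nonpositive and those indexed by $i$ are strictly negative unless $\lambda_i=0$; therefore all $\lambda_i$ must vanish, forcing $t=\sum_i\lambda_i s_i = 0$. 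Hence $\widetilde\Phi(x_0)=0$, and the compactness of $H$ guarantees $\widetilde\Phi<\infty$, so $\widetilde\Phi:\overline\Omega\to[0,\infty)$, concluding the proof.
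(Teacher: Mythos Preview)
Your argument is correct. Both your proof and the paper's construct the same kind of object---a concave envelope of $\Phi|_{\overline{\Theta}}$ extended by zero on the boundary---but via different technical routes. The paper writes down an explicit two-point formula
\[
\widetilde{\Phi}(x) = \sup\big\{ \lambda \Phi(z): \ \lambda\in[0,1],\ z\in\overline{\Theta},\ y\in\overline{\Omega},\ x=\lambda z+(1-\lambda)y\big\},
\]
and then has to prove concavity by hand via a reparametrization trick, whereas you lift everything to the convex hull of the hypograph in $\R^{n+2}$ and get concavity for free from the convexity of $H$. Conversely, the paper's boundary-vanishing argument is a one-liner (if $x=\lambda z+(1-\lambda)y$ with $z\in\overline{\Theta}\subseteq\Omega$ and $\lambda>0$, then $x$ lies in the open convex set $\Omega$, contradicting $x\in\partial\Omega$), while you need to invoke a supporting hyperplane to rule out positive mass on $\overline{\Theta}$ in the Carath\'eodory decomposition. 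Your geometric presentation is arguably more conceptual and avoids the ad hoc verification of concavity; the paper's formula is more explicit and keeps everything in the original space.
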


\begin{proof}
Consider the function $\widetilde{\Phi}:\overline{\Omega}\to[0,\infty)$ given by
\begin{equation*}
\forall \ x\in\overline{\Omega},\qquad \widetilde{\Phi}(x) = \sup\big\{ \lambda \Phi(z): \ \lambda\in[0,1], \ z\in \overline{\Theta} \ \mbox{for which } \exists \ y\in \overline{\Omega} \mbox{ with } x=\lambda z+(1-\lambda)y\big\}.
\end{equation*}
Observe that the set in the definition of $\widetilde{\Phi}$ is nonempty as $\Omega$ is bounded.  It is clear that $\widetilde{\Phi}(x)\geq \Phi(x)$ for every $x\in\overline{\Theta}$ and conversely if $x\in\overline{\Theta}$ can be written as $x=\lambda z+(1-\lambda)y$ for some $\lambda\in[0,1]$, $z\in \overline{\Theta}$ and $y\in\overline{\Omega}$, then
\begin{equation}
\lambda\Phi(z) \leq \lambda \Phi(z)+(1-\lambda)\Phi(y) \leq \Phi\big(\lambda z+(1-\lambda) y\big) = \Phi(x)
\end{equation}
by the nonnegativity and the concavity of $\Phi$. This shows that $\widetilde{\Phi}\equiv\Phi$ on $\overline{\Theta}$. Moreover, if $x\in\partial\Omega$ is such that $x=\lambda z+(1-\lambda)y$ for some $\lambda\in[0,1]$, $z\in \overline{\Theta}$ and $y\in\overline{\Omega}$, then we conclude that $\lambda=0$ as otherwise we would have $x\in\Omega$ due to the inclusion $\overline{\Theta}\subseteq \Omega$. Thus, $\widetilde{\Phi}\equiv0$ on $\partial\Omega$.

Finally, to verify that $\widetilde{\Phi}$ is concave, consider $x_1,x_2\in\overline{\Omega}$ and for $\e>0$ fix $\lambda_i\in[0,1]$, $z_i\in\overline{\Theta}$ and $y_i\in\overline{\Omega}$ such that $x_i = \lambda_i z_i + (1-\lambda_i) y_i$ and
\begin{equation}
\lambda_i \Phi(z_i) \geq \widetilde{\Phi}(x_i) - \e
\end{equation}
for $i\in\{1,2\}$. Then, for $\mu\in[0,1]$,  we can write $\mu x_1+(1-\mu)x_2$ as
\begin{equation*}
\big(\mu\lambda_1+(1-\mu)\lambda_2\big) \frac{\mu\lambda_1 z_1+(1-\mu)\lambda_2 z_2}{\mu\lambda_1+(1-\mu)\lambda_2} + \big(\mu(1-\lambda_1)+(1-\mu)(1-\lambda_2)\big) \frac{\mu(1-\lambda_1)y_1+(1-\mu)(1-\lambda_2)y_2}{\mu(1-\lambda_1)+(1-\mu)(1-\lambda_2)}
\end{equation*}
and using the definition of $\widetilde{\Phi}$ and the concavity of $\Phi$, we get
\begin{equation}
\begin{split}
\widetilde{\Phi}\big(\mu x_1+(1-\mu)x_2\big) \geq \big(\mu\lambda_1+&(1-\mu)\lambda_2\big)  \Phi\bigg( \frac{\mu\lambda_1 z_1+(1-\mu)\lambda_2 z_2}{\mu\lambda_1+(1-\mu)\lambda_2}\bigg)
\\ & \geq \mu\lambda_1\Phi(z_1) + (1-\mu)\lambda_2\Phi(z_2) \geq \mu \widetilde{\Phi}(x_1) + (1-\mu)\widetilde{\Phi}(x_2) - \e.
\end{split}
\end{equation}
Letting $\e\to0^+$, we conclude that $\widetilde{\Phi}$ is indeed concave on $\overline{\Omega}$.
\end{proof}

\begin{proof}[Proof of Step 3]
For $\e>0$, let $\Omega_{-\e}$ be an open convex set satisfying $\overline{\Omega}_{-\e} \subseteq \Omega \subseteq \overline{\Omega}\subseteq  \Omega_{-\e}+\e\msf{B}_2^{n+1}$ and such that each section $\Omega_{-\e,t}\eqdef \{x: \ (t,x) \in\Omega_{-\e}\}$ is origin symmetric.  Using Lemma \ref{lem:extension}, consider a concave function $\Phi_\e:\overline{\Omega}\to[0,\infty)$ such that $\Phi_\e \equiv \Phi$ on $\overline{\Omega}_{-\e}$ and $\Phi_\e \equiv 0$ on $\partial\Omega$.  Then, by Step 2,  for $\beta>0$, the function 
\begin{equation}
\varphi_\e(t) \eqdef \Bigg( \rmint_{\Omega_{t}}\big( \Phi_\e(t,x)+\e\big)^\beta\,\diff\mu(x)\Bigg)^{\frac{1}{\beta+n}}
\end{equation}
is concave on its support.  Letting $\e\to0^+$,  $\varphi_\e(t) \to\varphi(t)$ pointwise so the result follows.
\end{proof}

Finally, we are ready to conclude the proof of Theorem \ref{thm:bbl-gen}.

\begin{proof} [Proof of Theorem \ref{thm:bbl-gen}]
By Step 3, the result is known if $\Omega$ is open and bounded. The conclusion follows by approximating $\Omega$ by the domains $\Omega_m = \mathrm{int}\big(\Omega\cap \msf{B}_2^{n+1}(0,m)\big)$ and letting $m\to\infty$.
\end{proof}

\begin{proof} [Proof of Theorem \ref{thm:bbl}]
This is an immediate consequence of Theorem \ref{thm:bbl-gen} and Proposition \ref{prop:x2} after approximating $\mu$ by a smooth enough measure of full support on $\R^n$.
\end{proof}

As we already mentionned,  Theorem \ref{thm:bbl} also implies new weighted Brunn--Minkowski inequalities.

\begin{proof} [Proof of Corollary \ref{cor:b-conc}]
This is an application of Theorem \ref{thm:bbl} for the convex set
\begin{equation}
\Omega\eqdef\bigcup_{t\in[0,1]} \{t\}\times \big(tK+(1-t)L\big) \subseteq \R^{n+1}
\end{equation}
and the concave function $\Phi(t,x)\eqdef \Phi(x){\bf 1}_C(x)$, where $(t,x)\in\Omega$.
\end{proof}


\section{Log-concavity of weighted marginals and the $B$-inequality} \label{sec:log-conc}

The case of log-concavity corresponds formally to the case $\beta=\infty$. 

\begin{proof} [Proof of Theorem \ref{cor:prekopa}]
As before, we can assume that $\mu$ has a smooth density $e^{-W}$ of full support. We want to compute the second derivative of the function
\begin{equation}
\varphi(t)=\log \bigg( \rmint_{\R^n} e^{-V(t,\cdot)}\, \diff\mu\bigg) = \log \bigg( \rmint_{\R^n} e^{-V(t,x)-W(x)}\, \diff\mu(x)\bigg)
\end{equation}
and show that $\varphi''(t)\leq 0$. To that end, we introduce the probability measure
\begin{equation}
\diff\nu_t (x) = e^{-V(t,x)} \, \frac{\diff\mu(x)}{e^{\varphi(t)}},
\end{equation}
and a smooth function $u\equiv u_t:\R^n \to \R$
 such that
\begin{equation}
\ms{L}_{\nu_t} u = \partial_t V(t, \cdot)- \rmint_{\R^n} \partial_t V(t, \cdot)\, \diff\nu_t.
\end{equation}
The resulting formula for $\varphi''(t)$ is straightforward by manipulations similar to those of Proposition~\ref{prop:nguyen}, but we can also apply \eqref{eq:nguyen} with $\Phi(t,x)= (1-\frac1\beta V(t,x))_+$ and let $\beta\to \infty$,  $\beta\gamma\to 1$. This yields
\begin{equation}
-\frac{\varphi''(t)}{\varphi(t)}=
\rmint_{\R^n} \langle\nabla^2_{(t,x)} V X, X \rangle  \, \diff \nu_t 
+ \rmint_{\R^n} \|\nabla^2 u \|^2_{\mr{HS}} + 
\langle\nabla^2 W \nabla u, \nabla u\rangle \, \diff\nu_t,
\end{equation}
where $X(x)=(1,-\nabla u(x))$ for $x\in\R^{n+1}$. Using assumption \eqref{eq:assum-pr-cor}, we get
\begin{equation}
\begin{split}
\rmint_{\R^n}  \langle\nabla^2_{(t,x)} V X, X \rangle \,\diff\nu_t
\ge \kappa\bigg( \rmint_{\R^n} \langle\nabla_x V(t,x) , x  \rangle \,\diff&\nu_t(x)  -2 \rmint_{\R^n} \langle \nabla_x V(t, x) , \nabla u(x) \rangle\,\diff\nu_t(x) \bigg)
\\ & \ge -\kappa \frac{\Big(\rmint_{\R^n} \langle\nabla_x V(t, x) , \nabla u(x)\rangle\, \diff\nu_t(x)\Big)^2}{\rmint_{\R^n} \langle\nabla_x V(t,x) , x \rangle \, \diff\nu_t(x)},
\end{split}
\end{equation}
where we used that $V(t, \cdot)$ is convex and even to ensure that $\langle\nabla_x V(t,x) , x \rangle \ge 0$. Therefore,
\begin{equation} \label{eq:lcdone}
\begin{split}
-\frac{\varphi''(t)}{\varphi(t)} & \ge
\rmint_{\R^n} \|\nabla^2 u \|^2_{\mr{HS}} + 
\langle\nabla^2 W \nabla u, \nabla u\rangle \, \diff\nu_t
- \kappa  \frac{\Big(\rmint_{\R^n} \langle\nabla_x V(t, x) , \nabla u(x)\rangle\, \diff\nu_t(x)\Big)^2}{\rmint_{\R^n} \langle\nabla_x V(t,x) , x \rangle \, \diff\nu_t(x)}
\\ & \stackrel{\eqref{eq:x2}}{\geq} \frac{\Big( \rmint_{\R^n} \ms{L}_\mu u\,\diff\nu\Big)^2 }
{\rmint_{\R^n} \ms{L}_\mu (|x|^2/2) \,\diff\nu} - \kappa  \frac{\Big(\rmint_{\R^n} \langle\nabla_x V(t, x) , \nabla u(x)\rangle\, \diff\nu_t(x)\Big)^2}{\rmint_{\R^n} \langle\nabla_x V(t,x) , x \rangle \, \diff\nu_t(x)},
\end{split}
\end{equation}
where the second inequality follows because $\mu$ is hereditarily convex by Proposition \ref{prop:x2}. Recall however that $\ms{L}_{\nu_t} v = \ms{L}_\mu v - \langle \nabla V(t,\cdot), \nabla v\rangle$ and $\rmint_{\R^n} \ms{L}_{\nu_t}v\,\diff\nu_t=0$ for any smooth function $v$, which morally corresponds to a zero Neumann boundary condition at infinity.  Therefore,  \eqref{eq:lcdone} can be equivalently rewritten as
\begin{equation}
-\frac{\varphi''(t)}{\varphi(t)} \ge (1-\kappa) \frac{\Big(\rmint_{\R^n} \langle\nabla_x V(t, x) , \nabla u(x)\rangle\, \diff\nu_t(x)\Big)^2}{\rmint_{\R^n} \langle\nabla_x V(t,x) , x \rangle \, \diff\nu_t(x)} \geq 0,
\end{equation}
which concludes the proof of the theorem as $\kappa\leq 1$.
\end{proof}

\begin{remark} \label{rem:B-proofs}
In view of \eqref{eq:b-hessian}, the case $\kappa=1$ of Theorem \ref{cor:prekopa} contains the B-theorem for rotationally invariant measures.  Even though our proof, like that of \cite{CFM04,CR23}, relies on Poincar\'e-type spectral inequalities, it is in fact quite different from the existing arguments in the literature. To see this, let $\diff\mu(x)=e^{-W(x)}\,\diff x$ be a rotationally invariant measure and $V:\R^n\to\R$ an arbitrary convex function.  These works prove the functional version of the B-inequality, asserting that the function
\begin{equation} \label{eq:def-a}
\alpha(t) \eqdef \rmint_{\R^n} e^{-V(e^tx)-W(x)}\,\diff x
\end{equation}
is log-concave on $\R$, by applying the change of variables $y=e^t x$ and writing
\begin{equation} \label{eq:ch-var}
\alpha(t) = e^{nt} \rmint_{\R^n} e^{-V(y)-W(e^{-t}y)}\,\diff y.
\end{equation}
As the first factor is log-affine in $t$, one then computes the second derivative of the logarithm of the second factor and its concavity is equivalent to a refined Poincar\'e inequality for the even function $x\mapsto\langle \nabla W(x), x\rangle$, defined on $\R^n$.  In the proof presented here, we do not implement the change of variables \eqref{eq:ch-var} and instead work directly with the definition \eqref{eq:def-a} of $\alpha$, at the expense of having to apply the more refined functional inequality \eqref{eq:x2} to prove the desired log-concavity.
\end{remark}

\begin{remark} \label{rem:B-BL}
The Gaussian B-inequality assets that for every symmetric convex set $K$ in $\R^n$, the map $t\mapsto \gamma_n(e^tK)$ is log-concave. The validity of this inequality was first proposed by Banaszczyk (unpublished) and further popularized in Lata{\l}a's ICM survey \cite{Lat02}.  Afterwards, a proof which followed the scheme described in Remark \ref{rem:B-proofs} was found by Cordero-Erausquin, Fradelizi and Maurey \cite{CFM04}.  In particular, \cite{CFM04} used the $L_2$ method to derive a refinement of the log-concave
Poincar\'e inequality for even functions that follows from~\eqref{eq:bl-var} or from $\Gamma_2$ principles (see \cite[Theorem~4.8.4]{BGL14}).

It was thus to our great surprise to discover the strong form of the Gaussian B-inequality fully proven in the conference proceedings publication \cite[Corollary~1.7]{BL75} of Brascamp and Lieb, which served as an announcement to the famous paper \cite{BL76}! Even more so, their proof is an extremely elegant argument which seems to have eluded the community for decades.   Despite the fact that the article \cite{BL75} has become accessible as part of the edited volume \cite{Lie02}, we shall repeat Brascamp and Lieb's proof of the Gaussian B-inequality in the Appendix for the readers' convenience.

That being said, although~\cite{CFM04} can no longer be credited for the (first!) solution to the B-conjecture for the Gaussian measure, it keeps the merit of having put forward $L_2$ methods in functional Brunn-Minkowski theory, and having related inequalities under symmetry with \emph{second} eigenvalue spectral problems, two ideas that apply beyond the Gaussian case, and that have proved to be central in many recent developments, including the present one. 
\end{remark}


\section{Proof of Theorem \ref{thm:poincare}} \label{sec:poincare}

We will follow a simple approximation argument due to \cite{Ngu14b} to\mbox{ derive Theorem \ref{thm:poincare} from Theorem \ref{thm:bbl}.}

\begin{proof} [Proof of Theorem \ref{thm:poincare}]
By suitable approximation, we shall assume that $C$ is a bounded open convex set and that both $f$ and $\Phi$ are $C^2$ smooth and symmetric in a neighborhood of $\overline{C}$, with $\Phi$ and $-\nabla^2\Phi$ moreover being strictly positive. If $\e>0$, consider the function $\Phi_\e:\R\times C\to\R$ given by
\begin{equation}
\forall \ (t,x)\in\R\times C,\qquad \Phi_\e(t,x) \eqdef \Phi(x) + t g(x) - \frac{t^2}{2} \big\langle \big( -\nabla^2 \Phi(x)\big)^{-1} \nabla g(x), \nabla g(x)\big\rangle - \frac{\e}{2}\big( |x|^2+t^2\big).
\end{equation}
It is clear from its definition and the strict positivity of $\Phi$ and $-\nabla^2\Phi$ that for small enough $\e>0$ and $\eta>0$, the restriction $\Phi_\e|_{(-\eta,\eta)\times C}$ is also strictly positive. Moreover, the Hessian of $\Phi_\e$ satisfies
\begin{equation}
\nabla^2\Phi_\e(0,x) = \begin{pmatrix} - \langle ( -\nabla^2 \Phi(x))^{-1} \nabla g(x), \nabla g(x)\rangle  & \nabla g(x)^* \\ \nabla g(x) & \nabla^2\Phi(x)  \end{pmatrix} - \e \msf{Id}.
\end{equation}
Therefore for $(u_0,\bar{u})\in\R^{n+1}$,
\begin{equation*}
\begin{split}
\big\langle \nabla^2\Phi_\e&(0,x) (u_0,\bar{u}), (u_0,\bar{u})\big\rangle
\\ & = -\big\langle \big( -\nabla^2 \Phi(x)\big)^{-1} u_0 \nabla g(x), u_0 \nabla g(x)\big\rangle + 2 \langle u_0\nabla g(x), \bar{u}\rangle - \big\langle \big(-\nabla^2\Phi(x) \big) \bar{u},\bar{u}\big\rangle - \e\big(u_0^2+|\bar{u}|^2\big) 
\\ & \leq - \e\big(u_0^2+|\bar{u}|^2\big)
\end{split}
\end{equation*}
by the Cauchy--Schwarz inequality $\langle Ma,a\rangle + \langle M^{-1}b,b\rangle \geq 2 \langle a,b\rangle$ which holds for an arbitrary positive definite matrix $M$ and vectors $a,b$. Thus, again by continuity, for sufficiently small $\e>0$ and $\eta>0$, $\Phi_\e|_{(-\eta,\eta)\times C}$ is also concave. Since for each $t$, $\Phi_\e(t,\cdot)$ is clearly even, we can invoke Theorem \ref{thm:bbl} to conclude that the function $\varphi_\e:(-\eta,\eta)\to \R_+$ given by
\begin{equation}
 \varphi_\e(t) \eqdef \Bigg( \rmint_{C} \Phi_\e(t,x)^\beta\,\diff\mu(x)\Bigg)^{\frac{1}{\beta+n}}
\end{equation}
is concave on $(-\eta, \eta)$. In particular, using \eqref{eq:second-derivative}, we derive
\begin{equation*}
\frac{\beta+n}{\beta} \frac{\varphi_\e''(0)}{\varphi_\e(0)} = \rmint_C \frac{\partial_{tt}\Phi_\e(0,\cdot)}{\Phi_\e(0,\cdot)}\,\diff\sigma_\e + (\beta-1)\mathrm{Var}_{\sigma_\e} \Bigg( \frac{\partial_t\Phi_\e(0,\cdot)}{\Phi_\e(0,\cdot)} \Bigg) -\frac{n}{\beta+n} \Bigg(\rmint_C\frac{\partial_t \Phi_\e(0,\cdot)}{\Phi_\e(0,\cdot)} \,\diff\sigma_\e\Bigg)^2 \leq 0,
\end{equation*}
where $\diff\sigma_\e(x)$ is the probability measure with density proportional to $\Phi_\e(0,x)^\beta = \big(\Phi(x)-\frac{\e}{2}|x|^2 \big)^\beta$ with respect to $\diff\mu(x)$. Plugging the formula for $\Phi_\e(0,x)$, along with $\partial_t \Phi_\e(0,x) = g(x)$ and
\begin{equation}
\partial_{tt}\Phi_\e(0,x) = -\big\langle \big( -\nabla^2 \Phi(x)\big)^{-1} \nabla g(x), \nabla g(x)\big\rangle - \e
\end{equation}
and letting $\e\searrow0$ (so that also $\sigma_\e \to\nu_\beta$ weakly), we recover
\begin{equation}
(\beta-1)\mathrm{Var}_{\nu_\beta}\Big( \frac{g}{\Phi}\Big) \leq \rmint_C \frac{\langle (-\nabla^2\Phi)^{-1} \nabla g, \nabla g\rangle }{\Phi} \,\diff\nu_\beta + \frac{n}{\beta+n} \Bigg( \rmint_C \frac{g}{\Phi} \,\diff\nu_\beta\Bigg)^2.
\end{equation}
This coincides with the desired inequality \eqref{eq:new-poin} as $g=f\Phi$.
\end{proof}


\section{Further remarks and open problems} \label{sec:last}


\subsection{Hereditary convexity}
The proof of Theorems \ref{thm:bbl} and \ref{cor:prekopa} relied on the functional inequality \eqref{eq:x2} that was proven for rotationally invariant measures in Proposition \ref{prop:x2}. It is clear from our proofs that the validity of such a functional inequality for general even log-concave measures would affirmatively resolve both the B-conjecture and the dimensional Brunn--Minkowski conjecture.  We therefore ask the following natural question.

\begin{question} \label{q:ce}
Let $\mu$ be an even log-concave measure supported on $\R^n$ with $C^2$ smooth density. Is $\mu$ hereditarily convex?
\end{question}


\subsection{From sets to functions} In \cite{CR20}, it was shown that the B-inequality
\begin{equation}
\forall \ a,b\in\R_+ \mbox{ and } \lambda\in(0,1),\qquad \mu\big(a^\lambda b^{1-\lambda}K\big) \geq\mu(aK)^\lambda \mu(bK)^{1-\lambda},
\end{equation}
where $K,L$ are symmetric convex sets, holds for all even log-concave measures on $\R^n$ for any $n\in\N$ if and only if the functional version of the inequality holds on any $\R^n$, namely if
\begin{equation} \label{eq:def-a-again}
\alpha(t) \eqdef \rmint_{\R^n} e^{-V(e^tx)-W(x)}\,\diff x
\end{equation}
is log-concave on $\R$ for all even convex functions $V,W:\R^n\to\R\cup\{\infty\}$. One could wonder whether a similar transference principle from geometric to functional inequalities also holds in the context of the dimensional Brunn--Minkowski inequality, thus showing that Theorem \ref{thm:bbl} is a formal consequence of the results of \cite{EM21,CR23}. While the multiplicative nature of the B-inequality appears to be what made the result of \cite{CR20} possible,  one can in fact recover concavity principles in the spirit of Theorem \ref{thm:bbl} from geometric inequalities for sets at least for \emph{integer} powers $\beta>0$.

\begin{theorem} \label{thm:transference}
Fix $n\in\N$ and an even log-concave measure $\mu$ on $\R^n$ such that for all $k\in\N$,
\begin{equation} \label{eq:assum-transference}
\forall \ \lambda\in(0,1),\qquad (\mu\otimes m^k)\big(\lambda K+(1-\lambda)L\big)^{\kappa_{n,k}} \geq \lambda(\mu\otimes m^k)(K)^{\kappa_{n,k}}+(1-\lambda)(\mu\otimes m^k)(L)^{\kappa_{n,k}}
\end{equation}
for some $\kappa_{n,k}>0$ and all symmetric convex sets $K, L$ in $\R^{n+k}$, where $m^k$ is the Lebesgue measure on $\R^k$. Moreover, let $\Omega\subseteq \R^{n+1}$ be a convex set such that for each $t\in\R$, the corresponding section $\Omega_t=\{x\in\R^n: \ (t,x)\in\Omega\}$ is symmetric, and $\Phi:\Omega\to\R_+$ a concave function such that for each $t$, the function $\Phi(t,\cdot)$ is even on $\Omega_t$.  Then, for every integer $\beta\in\N$, the function
\begin{equation}
 \varphi(t) \eqdef \Bigg( \rmint_{\Omega_t} \Phi(t,x)^\beta\,\diff\mu(x)\Bigg)^{\kappa_{n,\beta}}
\end{equation}
is concave on its support, provided that the integral converges.
\end{theorem}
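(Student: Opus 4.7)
The plan is to exploit the fact that for an \emph{integer} exponent $\beta\in\N$, the quantity $\Phi(t,x)^\beta$ admits a nice geometric realization as the Lebesgue measure of a $\beta$-dimensional cube of side-length $2\Phi(t,x)$. This allows us to lift everything to $\R^{n+\beta+1}$ and reduce the functional statement to the geometric hypothesis \eqref{eq:assum-transference} with $k=\beta$.

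Concretely, the first step is to define the lifted set
\begin{equation*}
\widetilde{\Omega} \eqdef \big\{(t,x,y)\in \R\times\R^n\times\R^\beta : \ (t,x)\in\Omega \text{ and } |y_i|\leq \Phi(t,x) \text{ for } i=1,\ldots,\beta\big\}
\end{equation*}
and verify two structural properties of $\widetilde{\Omega}$. First, $\widetilde{\Omega}$ is convex: given two points $(t_j,x_j,y^{(j)})\in \widetilde{\Omega}$ with $j\in\{1,2\}$ and $\lambda\in(0,1)$, convexity of $\Omega$ handles the first two coordinates, while the triangle inequality combined with the concavity of $\Phi$ gives $|\lambda y_i^{(1)}+(1-\lambda)y_i^{(2)}|\leq \lambda \Phi(t_1,x_1)+(1-\lambda)\Phi(t_2,x_2)\leq \Phi(\lambda t_1+(1-\lambda)t_2,\lambda x_1+(1-\lambda)x_2)$. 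Second, each section $\widetilde{\Omega}_t\subseteq \R^{n+\beta}$ is symmetric with respect to the map $(x,y)\mapsto(-x,-y)$: this uses the hypothesis that $\Omega_t$ is symmetric in $x$ and that $\Phi(t,\cdot)$ is even on $\Omega_t$.

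Next, by Fubini's theorem, the $(\mu\otimes m^\beta)$-measure of the section is
\begin{equation*}
(\mu\otimes m^\beta)(\widetilde{\Omega}_t)=\rmint_{\Omega_t}\big(2\Phi(t,x)\big)^\beta\,\diff\mu(x)=2^\beta \rmint_{\Omega_t}\Phi(t,x)^\beta\,\diff\mu(x),
\end{equation*}
so $\varphi(t)=2^{-\beta\kappa_{n,\beta}}\big((\mu\otimes m^\beta)(\widetilde{\Omega}_t)\big)^{\kappa_{n,\beta}}$ up to a universal multiplicative constant. Now, by convexity of $\widetilde{\Omega}$ one has the standard inclusion $\lambda\widetilde{\Omega}_{t_1}+(1-\lambda)\widetilde{\Omega}_{t_2}\subseteq \widetilde{\Omega}_{\lambda t_1+(1-\lambda)t_2}$, and applying the hypothesis \eqref{eq:assum-transference} with $k=\beta$ to the symmetric convex bodies $\widetilde{\Omega}_{t_1},\widetilde{\Omega}_{t_2}\subseteq \R^{n+\beta}$ immediately yields
\begin{equation*}
(\mu\otimes m^\beta)\big(\widetilde{\Omega}_{\lambda t_1+(1-\lambda)t_2}\big)^{\kappa_{n,\beta}}\geq \lambda (\mu\otimes m^\beta)(\widetilde{\Omega}_{t_1})^{\kappa_{n,\beta}}+(1-\lambda)(\mu\otimes m^\beta)(\widetilde{\Omega}_{t_2})^{\kappa_{n,\beta}}.
\end{equation*}
Translated back through the normalization above, this is precisely the concavity of $\varphi$ on its support.

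No genuine obstacle stands in the way: the only delicate point is ensuring that the lifted set $\widetilde{\Omega}$ is both convex and has symmetric sections in $\R^{n+\beta}$, which is exactly where the evenness of $\Phi(t,\cdot)$ and the symmetry of $\Omega_t$ come into play (these are the same symmetry hypotheses that appear in Theorem \ref{thm:bbl}). The restriction to integer $\beta$ is intrinsic to this argument, as it is what permits the representation of $\Phi^\beta$ as a Lebesgue volume on $\R^\beta$; recovering non-integer exponents would presumably require replacing $m^\beta$ by a more general log-concave factor and hence lies outside the reach of this transference scheme.
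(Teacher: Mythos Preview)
Your proof is correct and follows essentially the same route as the paper's: lift $\Phi^\beta$ to the Lebesgue volume of a $\beta$-dimensional symmetric convex body scaled by $\Phi(t,x)$, check that the resulting set in $\R^{n+\beta+1}$ is convex with symmetric sections, and apply the geometric hypothesis. The only cosmetic difference is that the paper uses the Euclidean ball $\msf{B}_2^\beta$ (so the constraint is $|y|\le\Phi(t,x)$) whereas you use the cube (so the constraint is $\max_i|y_i|\le\Phi(t,x)$); either choice works equally well.
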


\begin{proof}
Since $\beta\in\N$, there exists a positive constant $c_\beta>0$ such that
\begin{equation}
\Phi(t,x)^\beta = c_\beta m^\beta\big( \Phi(t,x) \msf{B}_2^\beta\big),
\end{equation}
where $\msf{B}_2^\beta$ is the unit Euclidean ball in $\R^\beta$. Therefore, we can write
\begin{equation}
\begin{split}
 \varphi(t) = c_\beta^{\kappa_{n,\beta}} (\mu \otimes m^\beta)\big( \underbrace{\big\{ (x,y)\in\R^{n+k}: \ x\in \Omega_t \mbox{ and } |y| \leq \Phi(t,x)\big\}}_{C(t)}\big)^{\kappa_{n,\beta}},
\end{split}
\end{equation}
where $|y|$ is the Euclidean norm of $y$. The conclusion now follows from \eqref{eq:assum-transference} by observing that, by the concavity of $\Phi$,  the sets $\{C(t)\}_{t\in[0,1]}$ are symmetric convex sets in $\R^{n+\beta}$ which satisfy
\begin{equation}
C\big(\lambda t_1 + (1-\lambda) t_2\big) \supseteq \lambda C(t_1) + (1-\lambda) C(t_2)
\end{equation}
for every $t_1,t_2 \in\R$ and $\lambda\in(0,1)$.
\end{proof}

Combining this result with the general bound of Livshyts \cite{Liv23} for the dimensional Brunn--Minkowski inequality for even log-concave measures, we deduce the following corollary.

\begin{corollary} \label{cor:livshyts}
Let $\mu$ be an even log-concave measure on $\R^n$. Moreover, let $\Omega\subseteq \R^{n+1}$ be a convex set such that for each $t\in\R$, the set $\Omega_t=\{x\in\R^n: \ (t,x)\in\Omega\}$ is symmetric, and $\Phi:\Omega\to\R_+$ a concave function such that for each $t$, the function $\Phi(t,\cdot)$ is even on $\Omega_t$.  Then, for every integer $\beta\in\N$, the function
\begin{equation}
 \varphi(t) \eqdef \Bigg( \rmint_{\Omega_t} \Phi(t,x)^\beta\,\diff\mu(x)\Bigg)^{\kappa_{n+\beta}}
\end{equation}
is concave on its support, where $\{\kappa_r\}_{r\in\N}$ is a sequence satisfying $\kappa_r \asymp r^{-4-o(1)}$ as $r\to\infty$.
\end{corollary}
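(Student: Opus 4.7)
The plan is to deduce Corollary \ref{cor:livshyts} as a direct combination of the transference principle of Theorem \ref{thm:transference} with Livshyts' general quantitative bound \cite{Liv23} for the dimensional Brunn--Minkowski conjecture for even log-concave measures.

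First I would check that the hypothesis of Theorem \ref{thm:transference} is met in this setting. Since $\mu$ is an even log-concave measure on $\R^n$ and the Lebesgue measure $m^k$ is an even log-concave measure on $\R^k$, their product $\mu \otimes m^k$ is an even log-concave measure on $\R^{n+k}$ (log-concavity and evenness are preserved under tensor products). Livshyts' theorem \cite{Liv23} asserts that every even log-concave measure $\rho$ on $\R^d$ satisfies the dimensional Brunn--Minkowski inequality
\begin{equation}
\rho\big(\lambda K + (1-\lambda) L\big)^{\kappa_d} \geq \lambda \rho(K)^{\kappa_d} + (1-\lambda) \rho(L)^{\kappa_d}
\end{equation}
for symmetric convex sets $K, L \subseteq \R^d$, with a quantitative exponent $\kappa_d \asymp d^{-4-o(1)}$ as $d \to \infty$. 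Applied to $\rho = \mu \otimes m^k$, this provides assumption \eqref{eq:assum-transference} with the choice $\kappa_{n,k} = \kappa_{n+k}$.

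Then I would invoke Theorem \ref{thm:transference} with $\beta \in \N$ to conclude that the function
\begin{equation}
\varphi(t) = \Bigg(\rmint_{\Omega_t} \Phi(t,x)^\beta \,\diff\mu(x)\Bigg)^{\kappa_{n,\beta}} = \Bigg(\rmint_{\Omega_t} \Phi(t,x)^\beta \,\diff\mu(x)\Bigg)^{\kappa_{n+\beta}}
\end{equation}
is concave on its support, which is exactly the statement of the corollary.

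There is essentially no substantial obstacle here, as the argument is a bookkeeping exercise combining two external inputs. The only conceptual point worth flagging is that the integrality of $\beta$ is intrinsic to this approach: Theorem \ref{thm:transference} relies on expressing $\Phi(t,x)^\beta$ as the Lebesgue measure of a Euclidean ball of radius $\Phi(t,x)$ in $\R^\beta$, which only makes sense when $\beta$ is a positive integer, so the transference route cannot access non-integer exponents that Theorem \ref{thm:bbl} covers in the rotationally invariant case.
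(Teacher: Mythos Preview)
Your proposal is correct and matches the paper's own argument exactly: the corollary is deduced by observing that $\mu\otimes m^k$ is even log-concave, applying Livshyts' bound \cite{Liv23} to supply the exponent $\kappa_{n,k}=\kappa_{n+k}$ in \eqref{eq:assum-transference}, and then invoking Theorem~\ref{thm:transference}. Your remark about the integrality of $\beta$ is also apt; the paper notes separately that non-integer $\beta$ can be handled by inspecting Livshyts' proof and plugging the resulting spectral inequality into the machinery of Theorem~\ref{thm:bbl}, but that is not part of the corollary's proof.
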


An inspection of the proof of \cite{Liv23} shows that Livshyts in fact obtains the inequality
\begin{equation} 
\rmint_U \|\nabla^2 u\|_{\mr{HS}}^2 + \langle \nabla^2W \nabla u, \nabla u\rangle \, \diff\mu_t \geq \frac{1}{n^{4+o(1)}} \Bigg( \rmint_U \ms{L}_\mu u \,\diff\mu_t\Bigg)^2 
\end{equation}
for every even log-concave measure $\diff\mu(x)=e^{-W(x)}\,\diff x$ and every even $C^2$ function $u:U\to\R$. Therefore, a close inspection of the proof of Theorem \ref{thm:bbl} shows that the statement of Corollary \ref{cor:livshyts} can be extended from $\beta\in\N$ to arbitrary $\beta>0$.


\subsection{Weighted torsional rigidity} Inequalities of Brunn--Minkowski-type for functionals other than volume have been thoroughly studied throughout analysis. Prime examples from the works of Brascamp--Lieb and Borell include the Brunn--Minkowski inequality for the first eigenvalue of the Laplace operator \cite{BL76,Bor00}, for Newtonian \cite{Bor83} and logarithmic capacity \cite{Bor84} and for torsional rigidity \cite{Bor85}. For a unified perspective to these inequalities, we refer the reader to \cite{Col05}.

Let $K$ be a convex set in $\R^n$ with nonempty interior.  The \emph{torsional rigidity} $\tau(K)$ is defined by
\begin{equation} \label{eq:tor}
\frac{1}{\tau(K)} \eqdef\inf \Bigg\{ \frac{\rmint_K |\nabla u(x)|^2\,\diff x}{\big(\rmint_K |u(x)|\,\diff x\big)^2}: \ u \mbox{ is smooth and nonconstant in } K \mbox{ with } u\big|_{\partial K}\equiv 0 \Bigg\}.
\end{equation}
In \cite{Bor85}, Borell showed that for any convex sets $K,L$ in $\R^n$ with nonempty interior,
\begin{equation} \label{eq:borelll}
\forall \ \lambda\in(0,1),\qquad \tau\big( \lambda K+(1-\lambda)L\big)^{1/(n+2)} \geq \lambda \tau(K)^{1/(n+2)}+(1-\lambda)\tau(L)^{1/(n+2)}.
\end{equation}
In his argument, he made crucial use of an expression for $\tau(\cdot)$ in terms of the solution of an elliptic boundary-value problem. Namely, if $u:K\to\R$ is the unique solution of
\begin{equation} \label{eq:bvp}
\begin{cases}
\Delta u = -1, \quad \mbox{in } \mr{int}(K) \\
u \equiv 0, \quad \mbox{on } \partial K
\end{cases},
\end{equation}
then
\begin{equation} \label{eq:tor-for}
\tau(K) = \rmint_K |\nabla u(x)|^2\,\diff x \stackrel{\eqref{eq:bvp}}{=}  \rmint_K u(x)\,\diff x,
\end{equation}
where the second equality follows by integration by parts.
In \cite{Bor85}, Borell proved that if $K,L$ are convex sets in $\R^n$ with nonempty interior and $u(\lambda,\cdot):\lambda K+(1-\lambda)L \to\R$ is a solution to the boundary-value problem \eqref{eq:bvp} on $\lambda K+(1-\lambda)L$, then
the function $\sqrt{u(\lambda,x)}$ is concave in the pair of variables $(\lambda, x)$.  Afterwards, inequality \eqref{eq:borelll} becomes an immediate consequence of \eqref{eq:tor-for} and the concavity principle for the marginal \eqref{eq:varphi} with $\beta=2$.  We refer, for instance, to \cite[Section~4]{Sal15} for a comprehensive study of concavity properties of solutions to more general boundary-value problems.

In view of the recent interest in Brunn--Minkowski inequalities for measures of symmetric convex sets, it is worthwhile to investigate such inequalities for weighted analogues of other analytic functionals in the presence of symmetry.  Given an even probability measure $\mu$ on $\R^n$, the $\mu$-weighted torsional rigidity $\tau_\mu(K)$ of a symmetric convex set $K$ is defined by \eqref{eq:tor}, where all $\diff x$ integrals are replaced by $\diff\mu(x)$.  It can also be rephrased in terms of the solution to the boundary value problem
\begin{equation} \label{eq:bvp2}
\begin{cases}
\ms{L}_\mu u = -1, \quad \mbox{in } \mr{int}(K) \\
u \equiv 0, \quad \mbox{on } \partial K
\end{cases}.
\end{equation}
We refer to \cite{Liv24,HL24} for some recent investigations on the functional $\tau_\mu$. In view of the weighted concavity principle of Theorem \ref{thm:bbl}, an affirmative answer to the following question would prove the weighted analogue of \eqref{eq:borelll} for rotationally invariant log-concave weights.

\begin{question}
Fix an even log-concave measure $\mu$ on $\R^n$ and let $K, L$ be symmetric convex sets in $\R^n$.  Let also $u(\lambda,\cdot):\lambda K+(1-\lambda)L \to\R$ be the unique solution to \eqref{eq:bvp2} on $\lambda K+(1-\lambda)L$, where $\lambda\in[0,1]$.  Is the function $\sqrt{u(\lambda,x)}$ a concave function of the pair of variables $(\lambda, x)$?
\end{question}

A related result on the log-concavity of the first eigenfunction and eigenvalue of the Ornstein--Uhlenbeck operator $\ms{L}_{\gamma_n}$ was recently proven by Colesanti, Francini, Livshyts and Salani in \cite{CFLS24}.


\subsection{Negative exponents} Throughout this paper, we have studied the concavity  of the weighted marginal function \eqref{eq:varphi} for $\beta>0$, where $\Phi:\Omega\to\R$ is a concave function. This corresponds to the study of $\kappa$-concave measures in Borell's hierarchy \cite{Bor75}, where $\kappa\in(0,\tfrac{1}{n})$.  We pose the following question regarding weighted marginals of even $\kappa$-concave measures for $\kappa<0$.

\begin{question} \label{q:convex}
Let $\mu$ be an even log-concave measure on $\R^n$. Moreover, let $\Omega\subseteq \R^{n+1}$ be a convex set such that for each $t\in\R$, the set $\Omega_t=\{x\in\R^n: \ (t,x)\in\Omega\}$ is symmetric, and $\Phi:\Omega\to\R_+$ a convex function such that for each $t$, the function $\Phi(t,\cdot)$ is even on $\Omega_t$.  Is the function given by 
\begin{equation}
 \varphi(t) \eqdef \Bigg( \rmint_{\Omega_t} \Phi(t,x)^{-\beta}\,\diff\mu(x)\Bigg)^{-\frac{1}{\beta-n}}
\end{equation}
convex, for every $\beta>n$?
\end{question}

Similarly to the regime $\beta>0$ discussed in the introduction, when $\mu=\diff x$ is the Lebesgue measure, a positive answer to this question without any symmetry assumptions follows from~\eqref{eq:bbl2} applied to convex functions
(see also \cite{Ngu14a} for a local proof).  We note in passing that we are not aware of any examples of even $\kappa$-concave measures with $\kappa<0$ not satisfying the B-inequality or the dimensional Brunn--Minkowski inequality for symmetric convex sets. On the contrary, in \cite{CR23} it was shown that a broad family of such measures in fact satisfy both these inequalities.

We note also that our proof of Theorem \ref{thm:bbl} heavily used that the measure $\Phi(t,\cdot)^\beta \, \diff\mu$ is more log-concave than $\mu$, via the result of \cite{CR23}, so we expect some new ideas (perhaps in the form of new spectral inequalities) to be needed in order to answer Question \ref{q:convex} affirmatively.


\section*{Appendix.  Brascamp and Lieb's proof of the Gaussian B-theorem} \label{App}

As proposed in Remark~\ref{rem:B-BL} above, we end this paper by presenting the beautiful argument of Brascamp and Lieb \cite{BL75}, proving the strong version of the Gaussian B-theorem. In its functional form, this asserts that for every $n\times n$ diagonal matrix $D$ and every even convex function $V:\R^n\to\R\cup\{\infty\}$, the function given by
\begin{equation}
\forall \ t\in\R,\qquad \alpha(t) \eqdef \rmint_{\R^n} e^{-V(e^{tD}x) - |x|^2/2}\,\frac{\diff x}{(2\pi)^{n/2}}
\end{equation}
is log-concave on $\R$. Performing a change of variables, this is equivalent to the log-concavity of
\begin{equation}
\forall \ t\in\R,\qquad \beta(t)  \eqdef \rmint_{\R^n} e^{-|e^{-tD}x|^2/2-V(x)} \,\diff x,
\end{equation}
which, in turn, by scaling is equivalent to the local inequality $\beta'(0)^2\geq \beta(0)\beta''(0)$. Direct computation (see, e.g., \cite[pp.~413--414]{CFM04}) shows that this inequality can be written compactly as
\begin{equation} \label{eq:local-b}
\rmint_{\R^n} \langle x, Dx\rangle^2\,\diff\mu(x) - \Bigg( \rmint_{\R^n} \langle x,Dx\rangle\,\diff\mu(x)\Bigg)^2 \leq 2\rmint_{\R^n} \langle x,D^2x\rangle\,\diff\mu(x),
\end{equation}
where $\diff\mu(x)$ is the even probability measure with density \mbox{proportional to $e^{-V(x)}$ with respect to $\gamma_n$.}

\begin{proof} [Brascamp and Lieb's proof of \eqref{eq:local-b}]
Starting from the left-hand side, if $q(x) = \langle x,Dx\rangle$,
\begin{equation}
\mathrm{Var}_\mu( q) = \frac{1}{2Z^2} \rmint_{\R^n\times\R^n} \big( \langle x,Dx\rangle - \langle y,Dy\rangle \big)^2\ e^{-V(x)-V(y)} \,\diff\gamma_{2n}(x,y),
\end{equation}
where $Z=\rmint_{\R^n} e^{-V}\,\diff\gamma_n$ is a normalizing constant. The orthogonal transformation
\begin{equation} \label{eq:rotate}
(x,y) = \Big(\frac{u+v}{\sqrt{2}},\frac{u-v}{\sqrt{2}}\Big)
\end{equation}
preserves the $2n$-dimensional Gaussian measure, and thus we can rewrite the variance as
\begin{equation} \label{eq:apply-rot}
\begin{split}
\mathrm{Var}_\mu(q) & = \frac{2}{Z^2} \rmint_{\R^n\times\R^n}  \langle u,Dv\rangle^2 \ e^{-V(\frac{u+v}{\sqrt{2}})-V(\frac{u-v}{\sqrt{2}})} \,\diff\gamma_{2n}(u,v) 
\\ & = \frac{2}{Z^2} \rmint_{\R^n}\Bigg( \rmint_{\R^n}  \langle u,Dv\rangle^2 \, \diff \nu_v(u) \Bigg) Z(v) \,\diff\gamma_n(v),
\end{split}
\end{equation}
where $\diff\nu_v(u)$ is the probability measure on $\R^n$ with density proportional to $\exp(-V(\frac{u+v}{\sqrt{2}})-V(\frac{u-v}{\sqrt{2}}))$ with respect to $\diff\gamma_{n}(u)$ and $Z(v) = \rmint_{\R^n}\exp(-V(\frac{u+v}{\sqrt{2}})-V(\frac{u-v}{\sqrt{2}}))\,\diff\gamma_n(u)$ is a normalizing constant. Clearly every $\nu_v$ is an even measure on $\R^n$ and moreover, direct computation of the Hessian of the potential shows that $\nu_v$ is log-concave with respect to $\gamma_{n}$. Therefore, applying the Brascamp--Lieb inequality \eqref{eq:bl-var} to the odd function $\ell_v(u) = \langle u,Dv\rangle$, for every $v\in\R^n$, we have the inequality
\begin{equation}
\rmint_{\R^n}  \langle u,Dv\rangle^2 \, \diff \nu_v(u) = \mathrm{Var}_{\nu_v}(\ell_v) \leq \rmint_{\R^n} |\nabla \ell_v|^2 \,\diff\nu_v = |Dv|^2.
\end{equation}
Plugging this bound back into \eqref{eq:apply-rot} and using the definition of $Z(v)$, we finally get
\begin{equation} \label{eq:used-BL}
\begin{split}
\mathrm{Var}_\mu(q) & \leq \frac{2}{Z^2} \rmint_{\R^n\times\R^n}  |Dv|^2 \ e^{-V(\frac{u+v}{\sqrt{2}})-V(\frac{u-v}{\sqrt{2}})} \,\diff\gamma_{2n}(u,v)
\\ & =  \frac{2}{Z^2} \rmint_{\R^n\times\R^n}  \langle v,D^2v\rangle \ e^{-V(\frac{u+v}{\sqrt{2}})-V(\frac{u-v}{\sqrt{2}})} \,\diff\gamma_{2n}(u,v).
\end{split}
\end{equation}
Applying again the change of variables \eqref{eq:rotate} for which $v=\frac{x-y}{\sqrt{2}}$, we can rewrite \eqref{eq:used-BL} as
\begin{equation}
\begin{split}
\mathrm{Var}_\mu(q) & \leq \frac{2}{Z^2} \rmint_{\R^n\times\R^n} \frac{1}{2} \big( \langle x,D^2x\rangle + \langle y, D^2y\rangle - \langle x, D^2y\rangle-\langle y,D^2 x\rangle \big)\ e^{-V(x)-V(y)} \,\diff\gamma_{2n}(x,y)
\\ & = 2\rmint_{\R^n} \langle x,D^2x\rangle\,\diff\mu(x),
\end{split}
\end{equation}
where in the last equality we used that $\rmint_{\R^n} \langle x,D^2y\rangle \, \diff\mu(y) = 0$ for every $x\in\R^n$.
\end{proof}


\bibliographystyle{plain}
\bibliography{BBL-weighted}

\begin{thebibliography}{10}

\bibitem{AL25}
Gautam Aishwarya and Dongbin Li.
\newblock Entropy and functional forms of the dimensional {B}runn--{M}inkowski
  inequality in {G}auss space.
\newblock Preprint available at \url{https://arxiv.org/abs/2504.03114}, 2025.

\bibitem{AR23}
Gautam Aishwarya and Liran Rotem.
\newblock New {B}runn--{M}inkowski and functional inequalities via convexity of
  entropy.
\newblock Preprint available at \url{https://arxiv.org/abs/2311.05446}, 2023.

\bibitem{BGL14}
Dominique Bakry, Ivan Gentil, and Michel Ledoux.
\newblock {\em Analysis and geometry of {M}arkov diffusion operators}, volume
  348 of {\em Grundlehren der mathematischen Wissenschaften [Fundamental
  Principles of Mathematical Sciences]}.
\newblock Springer, Cham, 2014.

\bibitem{BBN03}
Keith Ball, Franck Barthe, and Assaf Naor.
\newblock Entropy jumps in the presence of a spectral gap.
\newblock {\em Duke Math. J.}, 119(1):41--63, 2003.

\bibitem{BC13}
Franck Barthe and Dario Cordero-Erausquin.
\newblock Invariances in variance estimates.
\newblock {\em Proc. Lond. Math. Soc. (3)}, 106(1):33--64, 2013.

\bibitem{BL00}
Sergey~G. Bobkov and Michel Ledoux.
\newblock From {B}runn-{M}inkowski to {B}rascamp-{L}ieb and to logarithmic
  {S}obolev inequalities.
\newblock {\em Geom. Funct. Anal.}, 10(5):1028--1052, 2000.

\bibitem{BL09a}
Sergey~G. Bobkov and Michel Ledoux.
\newblock Weighted {P}oincar\'e-type inequalities for {C}auchy and other convex
  measures.
\newblock {\em Ann. Probab.}, 37(2):403--427, 2009.

\bibitem{Bor75}
Christer Borell.
\newblock Convex set functions in {$d$}-space.
\newblock {\em Period. Math. Hungar.}, 6(2):111--136, 1975.

\bibitem{Bor83}
Christer Borell.
\newblock Capacitary inequalities of the {B}runn-{M}inkowski type.
\newblock {\em Math. Ann.}, 263(2):179--184, 1983.

\bibitem{Bor84}
Christer Borell.
\newblock Hitting probabilities of killed {B}rownian motion: a study on
  geometric regularity.
\newblock {\em Ann. Sci. \'Ecole Norm. Sup. (4)}, 17(3):451--467, 1984.

\bibitem{Bor85}
Christer Borell.
\newblock Greenian potentials and concavity.
\newblock {\em Math. Ann.}, 272(1):155--160, 1985.

\bibitem{Bor00}
Christer Borell.
\newblock Diffusion equations and geometric inequalities.
\newblock {\em Potential Anal.}, 12(1):49--71, 2000.

\bibitem{BK22}
K\'aroly~J. B\"or\"oczky and Pavlos Kalantzopoulos.
\newblock Log-{B}runn-{M}inkowski inequality under symmetry.
\newblock {\em Trans. Amer. Math. Soc.}, 375(8):5987--6013, 2022.

\bibitem{BLYZ12}
K\'aroly~J. B\"or\"oczky, Erwin Lutwak, Deane Yang, and Gaoyong Zhang.
\newblock The log-{B}runn-{M}inkowski inequality.
\newblock {\em Adv. Math.}, 231(3-4):1974--1997, 2012.

\bibitem{BL75}
Herm~Jan Brascamp and Elliott~H. Lieb.
\newblock Some inequalities for {G}aussian measures and the long-range order of
  the one-dimensional plasma.
\newblock In {\em Functional Integration and its Applications, Proceedings of
  the Conference on Functional Integration, Cumberland Lodge, England, edited
  by A.~M.~Arthurs}, pages 1--14. Clarendon Press, 1975.

\bibitem{BL76}
Herm~Jan Brascamp and Elliott~H. Lieb.
\newblock On extensions of the {B}runn-{M}inkowski and {P}r\'{e}kopa-{L}eindler
  theorems, including inequalities for log concave functions, and with an
  application to the diffusion equation.
\newblock {\em J. Functional Analysis}, 22(4):366--389, 1976.

\bibitem{Col05}
Andrea Colesanti.
\newblock Brunn-{M}inkowski inequalities for variational functionals and
  related problems.
\newblock {\em Adv. Math.}, 194(1):105--140, 2005.

\bibitem{Col08}
Andrea Colesanti.
\newblock From the {B}runn-{M}inkowski inequality to a class of
  {P}oincar\'{e}-type inequalities.
\newblock {\em Commun. Contemp. Math.}, 10(5):765--772, 2008.

\bibitem{CFLS24}
Andrea Colesanti, Elisa Francini, Galyna Livshyts, and Paolo Salani.
\newblock The {B}runn--{M}inkowski inequality for the first eigenvalue of the
  {O}rnstein--{U}hlenbeck operator and log-concavity of the relevant
  eigenfunction.
\newblock Preprint available at \url{https://arxiv.org/abs/2407.21354}, 2024.

\bibitem{CLM17}
Andrea Colesanti, Galyna~V. Livshyts, and Arnaud Marsiglietti.
\newblock On the stability of {B}runn-{M}inkowski type inequalities.
\newblock {\em J. Funct. Anal.}, 273(3):1120--1139, 2017.

\bibitem{Cor05}
Dario Cordero-Erausquin.
\newblock On {B}erndtsson's generalization of {P}r\'ekopa's theorem.
\newblock {\em Math. Z.}, 249(2):401--410, 2005.

\bibitem{CFM04}
Dario Cordero-Erausquin, Matthieu Fradelizi, and Bernard Maurey.
\newblock The ({B}) conjecture for the {G}aussian measure of dilates of
  symmetric convex sets and related problems.
\newblock {\em J. Funct. Anal.}, 214(2):410--427, 2004.

\bibitem{CK12}
Dario Cordero-Erausquin and Bo'az Klartag.
\newblock Interpolations, convexity and geometric inequalities.
\newblock In {\em Geometric aspects of functional analysis}, volume 2050 of
  {\em Lecture Notes in Math.}, pages 151--168. Springer, Heidelberg, 2012.

\bibitem{CR20}
Dario Cordero-Erausquin and Liran Rotem.
\newblock Several results regarding the ({B})-conjecture.
\newblock In {\em Geometric aspects of functional analysis. {V}ol. {I}}, volume
  2256 of {\em Lecture Notes in Math.}, pages 247--262. Springer, Cham, [2020]
  \copyright 2020.

\bibitem{CR23}
Dario Cordero-Erausquin and Liran Rotem.
\newblock Improved log-concavity for rotationally invariant measures of
  symmetric convex sets.
\newblock {\em Ann. Probab.}, 51(3):987--1003, 2023.

\bibitem{Din57}
Alexander Dinghas.
\newblock \"{U}ber eine {K}lasse superadditiver {M}engenfunktionale von
  {B}runn-{M}inkowski-{L}usternikschem {T}ypus.
\newblock {\em Math. Z.}, 68:111--125, 1957.

\bibitem{EM21}
Alexandros Eskenazis and Georgios Moschidis.
\newblock The dimensional {B}runn-{M}inkowski inequality in {G}auss space.
\newblock {\em J. Funct. Anal.}, 280(6):Paper No. 108914, 19, 2021.

\bibitem{Gar02}
Richard~J. Gardner.
\newblock The {B}runn-{M}inkowski inequality.
\newblock {\em Bull. Amer. Math. Soc. (N.S.)}, 39(3):355--405, 2002.

\bibitem{GZ10}
Richard~J. Gardner and Artem Zvavitch.
\newblock Gaussian {B}runn-{M}inkowski inequalities.
\newblock {\em Trans. Amer. Math. Soc.}, 362(10):5333--5353, 2010.

\bibitem{GT01}
David Gilbarg and Neil~S. Trudinger.
\newblock {\em Elliptic partial differential equations of second order}.
\newblock Classics in Mathematics. Springer-Verlag, Berlin, 2001.
\newblock Reprint of the 1998 edition.

\bibitem{HM53}
Ralph Henstock and Alexander~Murray Macbeath.
\newblock On the measure of sum-sets. {I}. {T}he theorems of {B}runn,
  {M}inkowski, and {L}usternik.
\newblock {\em Proc. London Math. Soc. (3)}, 3:182--194, 1953.

\bibitem{HL24}
Orli Herscovici and Galyna~V. Livshyts.
\newblock Kohler-{J}obin meets {E}hrhard: the sharp lower bound for the
  {G}aussian principal frequency while the {G}aussian torsional rigidity is
  fixed, via rearrangements.
\newblock {\em Proc. Amer. Math. Soc.}, 152(10):4437--4450, 2024.

\bibitem{Hil12}
David Hilbert.
\newblock {\em Grundz\"uge einer allgemeinen {T}heorie der linearen
  {I}ntegralgleichungen}.
\newblock B. G. Teubner, Leipzig, 1912.

\bibitem{Hor65}
Lars H\"ormander.
\newblock {$L\sp{2}$} estimates and existence theorems for the {$\bar \partial
  $}\ operator.
\newblock {\em Acta Math.}, 113:89--152, 1965.

\bibitem{Hor94}
Lars H\"ormander.
\newblock {\em Notions of convexity}, volume 127 of {\em Progress in
  Mathematics}.
\newblock Birkh\"auser Boston, Inc., Boston, MA, 1994.

\bibitem{HKL21}
Johannes Hosle, Alexander~V. Kolesnikov, and Galyna~V. Livshyts.
\newblock On the {$L_p$}-{B}runn-{M}inkowski and dimensional
  {B}runn-{M}inkowski conjectures for log-concave measures.
\newblock {\em J. Geom. Anal.}, 31(6):5799--5836, 2021.

\bibitem{IM24}
Mohammad~N. Ivaki and Emanuel Milman.
\newblock {$L^p$}-{M}inkowski problem under curvature pinching.
\newblock {\em Int. Math. Res. Not. IMRN}, (10):8638--8652, 2024.

\bibitem{KL21}
Alexander~V. Kolesnikov and Galyna~V. Livshyts.
\newblock On the {G}ardner-{Z}vavitch conjecture: symmetry in inequalities of
  {B}runn-{M}inkowski type.
\newblock {\em Adv. Math.}, 384:Paper No. 107689, 23, 2021.

\bibitem{KL22}
Alexander~V. Kolesnikov and Galyna~V. Livshyts.
\newblock On the local version of the {L}og-{B}runn--{M}inkowski conjecture and
  some new related geometric inequalities.
\newblock {\em Int. Math. Res. Not. IMRN}, (18):14427--14453, 2022.

\bibitem{KM17}
Alexander~V. Kolesnikov and Emanuel Milman.
\newblock Brascamp-{L}ieb-type inequalities on weighted {R}iemannian manifolds
  with boundary.
\newblock {\em J. Geom. Anal.}, 27(2):1680--1702, 2017.

\bibitem{KM18}
Alexander~V. Kolesnikov and Emanuel Milman.
\newblock Poincar\'e{} and {B}runn-{M}inkowski inequalities on the boundary of
  weighted {R}iemannian manifolds.
\newblock {\em Amer. J. Math.}, 140(5):1147--1185, 2018.

\bibitem{KM22}
Alexander~V. Kolesnikov and Emanuel Milman.
\newblock Local {$L^p$}-{B}runn-{M}inkowski inequalities for {$p<1$}.
\newblock {\em Mem. Amer. Math. Soc.}, 277(1360):v+78, 2022.

\bibitem{Lat02}
Rafa{\l} Lata{\l}a.
\newblock On some inequalities for {G}aussian measures.
\newblock In {\em Proceedings of the {I}nternational {C}ongress of
  {M}athematicians, {V}ol. {II} ({B}eijing, 2002)}, pages 813--822. Higher Ed.
  Press, Beijing, 2002.

\bibitem{Lei72b}
L\'aszl\'o Leindler.
\newblock On a certain converse of {H}\"older's inequality. {II}.
\newblock {\em Acta Sci. Math. (Szeged)}, 33(3-4):217--223, 1972.

\bibitem{Lie02}
Elliott~H. Lieb.
\newblock {\em Inequalities}.
\newblock Springer-Verlag, Berlin, 2002.
\newblock Selecta of Elliott H. Lieb, Edited, with a preface and commentaries,
  by M. Loss and M. B.\ Ruskai.

\bibitem{LMNZ17}
Galyna Livshyts, Arnaud Marsiglietti, Piotr Nayar, and Artem Zvavitch.
\newblock On the {B}runn-{M}inkowski inequality for general measures with
  applications to new isoperimetric-type inequalities.
\newblock {\em Trans. Amer. Math. Soc.}, 369(12):8725--8742, 2017.

\bibitem{Liv23}
Galyna~V. Livshyts.
\newblock A universal bound in the dimensional {B}runn-{M}inkowski inequality
  for log-concave measures.
\newblock {\em Trans. Amer. Math. Soc.}, 376(9):6663--6680, 2023.

\bibitem{Liv24}
Galyna~V. Livshyts.
\newblock On a conjectural symmetric version of {E}hrhard's inequality.
\newblock {\em Trans. Amer. Math. Soc.}, 377(7):5027--5085, 2024.

\bibitem{MD10}
Li~Ma and Sheng-Hua Du.
\newblock Extension of {R}eilly formula with applications to eigenvalue
  estimates for drifting {L}aplacians.
\newblock {\em C. R. Math. Acad. Sci. Paris}, 348(21-22):1203--1206, 2010.

\bibitem{Mar16}
Arnaud Marsiglietti.
\newblock On the improvement of concavity of convex measures.
\newblock {\em Proc. Amer. Math. Soc.}, 144(2):775--786, 2016.

\bibitem{Mau91}
Bernard Maurey.
\newblock Some deviation inequalities.
\newblock {\em Geom. Funct. Anal.}, 1(2):188--197, 1991.

\bibitem{Mil21}
Emanuel Milman.
\newblock Centro-affine differential geometry and the log-{M}inkowski problem.
\newblock To appear in {\it J. Eur. Math. Soc.}. Preprint available at
  \url{https://arxiv.org/abs/2104.12408}, 2021.

\bibitem{NT13}
Piotr Nayar and Tomasz Tkocz.
\newblock A note on a {B}runn-{M}inkowski inequality for the {G}aussian
  measure.
\newblock {\em Proc. Amer. Math. Soc.}, 141(11):4027--4030, 2013.

\bibitem{Ngu14b}
Van~Hoang Nguyen.
\newblock Dimensional variance inequalities of {B}rascamp-{L}ieb type and a
  local approach to dimensional {P}r\'ekopa's theorem.
\newblock {\em J. Funct. Anal.}, 266(2):931--955, 2014.

\bibitem{Ngu14a}
Van~Hoang Nguyen.
\newblock A local proof of the dimensional {P}r\'ekopa's theorem.
\newblock {\em J. Math. Anal. Appl.}, 419(1):20--27, 2014.

\bibitem{Pre71}
Andr\'as Pr\'ekopa.
\newblock Logarithmic concave measures with application to stochastic
  programming.
\newblock {\em Acta Sci. Math. (Szeged)}, 32:301--316, 1971.

\bibitem{Pre73}
Andr\'as Pr\'ekopa.
\newblock On logarithmic concave measures and functions.
\newblock {\em Acta Sci. Math. (Szeged)}, 34:335--343, 1973.

\bibitem{Rei77}
Robert~C. Reilly.
\newblock Applications of the {H}essian operator in a {R}iemannian manifold.
\newblock {\em Indiana Univ. Math. J.}, 26(3):459--472, 1977.

\bibitem{Rin76}
Yosef Rinott.
\newblock On convexity of measures.
\newblock {\em Ann. Probability}, 4(6):1020--1026, 1976.

\bibitem{RX21}
Michael Roysdon and Sudan Xing.
\newblock On {$L_p$}-{B}runn-{M}inkowski type and {$L_p$}-isoperimetric type
  inequalities for measures.
\newblock {\em Trans. Amer. Math. Soc.}, 374(7):5003--5036, 2021.

\bibitem{Sal15}
Paolo Salani.
\newblock Combination and mean width rearrangements of solutions to elliptic
  equations in convex sets.
\newblock {\em Ann. Inst. H. Poincar\'e{} C Anal. Non Lin\'eaire},
  32(4):763--783, 2015.

\bibitem{Sar16}
Christos Saroglou.
\newblock More on logarithmic sums of convex bodies.
\newblock {\em Mathematika}, 62(3):818--841, 2016.

\end{thebibliography}

\end{document}